\documentclass[12pt,fleqn]{article}

\usepackage{amsmath,amssymb,amsthm}
\usepackage{geometry} 
\usepackage[pdfpagemode=UseNone,pdfstartview=FitH]{hyperref}
\usepackage{enumitem}
\usepackage{comment}
\usepackage{xcolor}
\usepackage{esint}
\usepackage{graphicx}

\newcommand{\A}{{\cal A}}

\newcommand{\F}{{\cal F}}

\newcommand{\dist}[2]{\text{dist}\, (#1,#2)}

\newcommand{\M}{{\cal M}}
\newcommand{\N}{\mathbb N}

\newcommand{\pnorm}[2][]{\if #1'' \left|#2\right|_p \else \left|#2\right|_{#1} \fi}
\newcommand{\R}{\mathbb R}

\newcommand{\seq}[1]{\left(#1\right)}
\newcommand{\set}[1]{\left\{#1\right\}}

\newcommand{\w}[1]{\widetilde{#1}}
\newcommand{\weak}{\rightharpoonup}

\newcommand{\J}{\mathcal J}

\newcommand{\om}{\int_{\mathbb{R}^N}}

\newenvironment{enumroman}{\begin{enumerate}

}{\end{enumerate}}

\newtheorem{corollary}{Corollary}[section]
\newtheorem{lemma}[corollary]{Lemma}
\newtheorem{proposition}[corollary]{Proposition}
\newtheorem{theorem}[corollary]{Theorem}

\theoremstyle{definition}
\newtheorem{definition}[corollary]{Definition}

\theoremstyle{remark}

\newtheorem{remark}[corollary]{Remark}

\numberwithin{equation}{section}

\usepackage{authblk}

\title{\bfseries
Doubly critical mixed-order quasilinear equations: existence, multiplicity and regularity\thanks{MSC2020: 35J92 Primary ; 35J60, 35B33, 35B65 Secondary\\
Key Words and Phrases: mixed-order quasilinear equations; \(q\)-biharmonic operator; \(p\)-Laplacian; critical Sobolev exponents; weighted perturbations}
}

\author[1]{Elisandra Gloss}
\author[2]{Artur Jorge Marinho}
\author[3]{Kanishka Perera}
\author[4]{Bruno Ribeiro}

\vspace{0.5cm}

\affil[1,4]{Departamento de Matem\'atica\\
Universidade Federal da Para\'iba\\

Cidade Universit\'aria, 58051-900, Jo\~ao Pessoa, Brazil\\
\texttt{elisandra.gloss@academico.ufpb.br \& bhcr@academico.ufpb.br}}

\affil[2,3]{Department of Mathematics\\
Florida Institute of Technology\\

150 W University Blvd, Melbourne, FL 32901-6975, USA\\
\texttt{amarinho2024@my.fit.edu \& kperera@fit.edu}}

\date{}

\begin{document}

\maketitle

\vspace{0.5cm}

\begin{abstract}
We investigate a class of mixed-order quasilinear elliptic equations in
\(\mathbb R^N\) driven by the \(q\)-biharmonic and \(p\)-Laplacian
operators,
\[
\Delta_q^2u-\Delta_pu
=
\lambda\frac{|u|^{r_\sigma-2}u}{|x|^\sigma}
+|u|^{p^*-2}u
+|u|^{q^{**}-2}u,
\]
where \(1<q<N/2\), \(1<p<q^*\), \(p\ne q\), \(q\ne p^*\), and the
weighted exponent is determined by the natural scaling of the equation.
We establish compact weighted embeddings compatible with this mixed
scaling and develop the corresponding variational framework. As an
application, we prove existence and multiplicity of nontrivial solutions
under suitable assumptions on the parameters. We also establish a local
regularity result for the more general equation $\Delta_q^2u-\Delta_pu=f(x,u),$ where \(f\) is a Carath\'eodory function with local critical growth. By
combining nonlinear potential estimates, a regularity-lifting argument,
and a finite bootstrap for an associated second-order system, we obtain
higher local regularity of both \(u\) and the nonlinear flux
\(|\Delta u|^{q-2}\Delta u\). This regularity result, which is also of
independent interest, enables us to derive a Pohozaev-type identity for
the equations under consideration.
\end{abstract}

\newpage

\tableofcontents

\section{Introduction}

In this paper we study mixed quasilinear elliptic problems on $\R^N$ involving the
$q$-biharmonic operator and the $p$-Laplacian. More precisely, we consider equations of the
form
\begin{equation}\label{intro:prob1}
    \Delta_q^2u-\Delta_pu
    =
    \lambda\dfrac{|u|^{r_\sigma-2}u}{|x|^{\sigma}}
    + \tau_1|u|^{p^*-2}u
    + \tau_2|u|^{q^{**}-2}u
    \quad\text{in }\R^N,
\end{equation}
where $\Delta_q^2u=\Delta\big(|\Delta u|^{q-2}\Delta u\big)$ is the $q$-biharmonic
operator, $\Delta_pu =\mathrm{div}\big(|\nabla u|^{p-2}\nabla u\big)$ is the $p$-Laplacian
operator, $\lambda\in\R$, $\tau_1,\tau_2\in\{0,1\}$ and
\begin{equation}\label{p,q,q*}
1<q<\frac N2,
\qquad
1<p<q^*,\qquad p\neq q,\qquad q\ne p^*.
\end{equation}
with $\sigma\in[0,N)$ {and} $1<r_\sigma<2q.$  The precise restrictions on $\sigma$ and $r_\sigma$ will be given later. As usual,
$p^*={Np}/({N-p})$ and $q^{**}={Nq}/({N-2q})$.
Notice that $q<N/2$ is equivalent to $q^*<N$ and also $q^*<2q$. So, we also have $p<N$ and $p<2q$. 

\medskip

Quasilinear fourth-order equations driven by the $q$-biharmonic
operator have been studied in connection with nonlinear elasticity,
non-Newtonian fluids, and models involving higher-order diffusion.
Early variational results for Navier problems involving the
$q$-biharmonic operator were obtained in
\cite{CanditoMolicaBisci2012}, while critical-growth problems were
investigated, among others, in \cite{dos2010positive}. Ground-state
solutions for $q$-biharmonic equations in the whole space were
considered in \cite{LiuChenAlmuaalemi2017}. The presence of singular
Hardy--Rellich potentials introduces an additional loss of compactness
and has motivated a substantial literature on positive, ground-state,
and sign-changing solutions; see, for example,
\cite{DhifliAlsaedi2021,HuangLiu2014,YangZhangLiu2017} and the
references therein. These problems are naturally connected with the
critical embedding $D ^{2,q}(\mathbb R^N)\hookrightarrow
L^{q^{**}}(\mathbb R^N)$ and with the corresponding Hardy--Rellich inequalities.

\medskip

A related line of research concerns fourth-order equations perturbed by a nonlinear second-order operator. Sun, Chu, and Wu \cite{SunChuWu2017} studied equations of the form
\[
\Delta^2u-\beta\Delta_pu+\lambda V(x)u=f(x,u)
\qquad\text{in }\mathbb R^N,
\]
establishing existence and multiplicity results  and Gagliardo--Nirenberg inequalities. Further results for singular
potentials and Nehari-type constraints were obtained in \cite{SunWu2019}, while sign-changing solutions under Neumann boundary conditions were considered in \cite{SunLiuWu2017}. In the critical
setting, Su and Shi \cite{SuShi2021} studied a biharmonic equation in $\mathbb R^N$ involving a critical $p$-Laplacian term and a Hardy--Rellich potential. These works already exhibit some of the difficulties caused by the interaction of operators of different
orders, but the leading fourth-order operator in them is the classical biharmonic operator.

Problems in which both the fourth-order and the second-order parts are
quasilinear are much less understood. A particularly relevant
predecessor is the work of Yu, Zhao, and Luo
\cite{YuZhaoLuo2022}, who proved the existence of a ground-state
solution for
\[
\Delta_q^2u-\Delta_pu
-\mu\frac{|u|^{q-2}u}{|x|^{2q}}
=|u|^{q^{**}-2}u
\qquad\text{in }\mathbb R^N
\]
in the endpoint case $p=q^*$. In this case the $q$-biharmonic energy, the $p$-Dirichlet energy,
the Hardy--Rellich term, and the $q^{**}$-critical nonlinearity are all
invariant under the same dilation. More recently, Yang et al.
\cite{YangCarlosHamdaniKefi2026} considered a genuinely mixed
$q$-biharmonic and $p$-Laplacian problem with logarithmic and
$q^{**}$-critical nonlinearities on a bounded domain. In contrast, the
present paper deals with the strictly subcritical relation $p<q^*$ in
the whole space and incorporates simultaneously the critical
nonlinearities associated with
$D^{1,p}(\mathbb R^N)$ and
$D^{2,q}(\mathbb R^N)$, together with a weighted Hardy--Sobolev perturbation. 

\medskip

One of the main difficulties in the analysis of \eqref{intro:prob1} is due to the fact that
the two principal operators possess different homogeneities. Our approach is based on the
identification of a suitable scaling under which these homogeneities become compatible.
This scaling places the problem within the abstract framework of scaled operators
developed in \cite{MR5043800}, leading naturally to an associated nonlinear eigenvalue problem. A detailed scaling analysis, carried out in the next section, determines the admissible values of the parameters $r$ and $\sigma$ and provides the compact weighted embeddings required by our variational approach.

More precisely, throughout this paper we work under the following assumptions
\begin{equation}\label{sigma_and_rsigma}
p<\sigma<2q\quad\text{and}\quad r_\sigma:=
\frac{pq+\sigma(q-p)}{2q-p}.
\end{equation}

The corresponding eigenvalue problem is
\begin{equation}\label{intro:eigenvalue-problem}
\Delta_q^2u-\Delta_pu
=
\lambda
\frac{|u|^{r_\sigma-2}u}{|x|^\sigma}
\quad\text{in }\R^N.
\end{equation}

That means, under a suitable choice of $\gamma$ and a suitable scaling of the form $u_t(x)=t^\gamma u(tx)$, this problem becomes invariant for the family $(u_t),$ $t>0$. Using the abstract theory of scaled operators and the Fadell--Rabinowitz cohomological
index, we obtain a sequence of nonlinear eigenvalues
\[
0<\lambda_1\le\lambda_2\le\cdots\nearrow+\infty,
\]
which plays the role of variational thresholds in our existence results.

\medskip

This common-scaling approach and the resulting nonlinear spectral
theory are not restricted to the mixed-order problem considered here. They have proved successful in the study of nonlinear PDEs whose constituent terms acquire a common nontrivial homogeneity after a suitable combined spatial and amplitude dilation. The abstract framework developed in \cite{MR5043800} was applied to fractional
Schrödinger--Poisson--Slater equations in Coulomb--Sobolev spaces in
\cite{MR5047855}. Further applications include inhomogeneous Schrödinger equations with competing singular nonlinearities \cite{GlossPereraRibeiro2026}, critical equations with superpositions of nonlocal Hartree-type nonlinearities \cite{MarinhoPerera2026}, and new existence and multiplicity results for Schrödinger--Poisson--Slater equations in Coulomb--Sobolev spaces \cite{MarinhoMercuriPerera2026}.

\medskip

Our first goal is to establish the existence and multiplicity of nontrivial solutions for critical
perturbations of \eqref{intro:eigenvalue-problem}, that is, \eqref{intro:prob1} with $\tau_1=1$ or/and $\tau_2=1$. Depending on the relative position of
the critical exponents $p^*$ and $q^{**}$ with respect to the scaling, the problem naturally
splits into different parameter regimes. In some of these regimes both critical
nonlinearities may be treated simultaneously, while in others one of them must be removed
in order to recover the compactness needed by the variational argument.

Let us define a weak solution for equations like \eqref{intro:prob1}. Given a Carath\'eodory function $f:\mathbb{R}^N\times\mathbb{R}\to\mathbb{R}$, we say that $u\in D^{1,p}(\mathbb{R}^N)\cap
D^{2,q}(\mathbb R^N)$ is a weak solution for
\begin{equation}\label{generalfproblem}
\Delta_q^2u-\Delta_pu=f(x,u)\quad\text{in  } \R^N
\end{equation}
if $f(\cdot,u)\in L^1_{loc}(\R^N)$ and
\[
\int_{\R^N}|\Delta u|^{q-2}\Delta u \Delta vdx+\int_{\R^N}|\nabla u|^{p-2}\nabla u \nabla vdx=\int_{\R^N} f(x,u)vdx,\quad\forall v\in C^\infty_c(\R^N).
\]
With this notion of solution in mind, let us establish our main results.

\begin{theorem}\label{intro:thm:p-less-q}
Assume that $1<q<N/2$ and
\(
1<p<q<p^*\)
or
\(
1<q<p<q^*.
\)
Consider 
\begin{equation}\label{prob1}
    \Delta_q^2u-\Delta_pu
    =
    \lambda\dfrac{|u|^{r_\sigma-2}u}{|x|^{\sigma}}
    + |u|^{p^*-2}u
    + |u|^{q^{**}-2}u
    \quad\text{in }\R^N,
\end{equation}
for $\sigma$ and $r_\sigma$ as in \eqref{sigma_and_rsigma}. Suppose that
\[
\lambda_k=\cdots=\lambda_{k+m-1}<\lambda_{k+m}
\]
for some $k,m\geq1$. Then there exists $\delta_k>0$ such that, for every
\(
\lambda\in(\lambda_k-\delta_k,\lambda_k),
\)
problem \eqref{prob1} possesses $m$ distinct pairs of nontrivial weak solutions at positive energy levels.
\end{theorem}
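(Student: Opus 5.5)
Let $W=D^{1,p}(\R^N)\cap D^{2,q}(\R^N)$ be the energy space. The plan is to apply the abstract critical-point theorem for scaled operators from \cite{MR5043800}, in its cohomological-index version, which produces $m$ pairs of critical points near a multiple eigenvalue $\lambda_k=\dots=\lambda_{k+m-1}<\lambda_{k+m}$ of a $\mathbb Z_2$-equivariant even functional. The functional is
\[
\Phi_\lambda(u)=\frac1q\int|\Delta u|^q+\frac1p\int|\nabla u|^p-\frac{\lambda}{r_\sigma}\int\frac{|u|^{r_\sigma}}{|x|^\sigma}-\frac1{p^*}\int|u|^{p^*}-\frac1{q^{**}}\int|u|^{q^{**}},
\]
and it is even and $C^1$ on $W$. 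The abstract theorem needs two ingredients:
\begin{itemize}[leftmargin=*]
\item a compactness threshold $c^*>0$ such that $\Phi_\lambda$ satisfies $(PS)_c$ for all $0<c<c^*$;
\item a geometry in which $\Phi_\lambda$ is bounded below by a positive constant on a set of index $\ge k+m-1$ built from $\Psi^{\lambda_{k+m}}$, while on a compact symmetric set $A_0$ of index $k+m-1$ (a sublevel of the eigenvalue functional $\Psi$, given by the Fadell--Rabinowitz minimax construction) we have $\sup\Phi_\lambda<c^*$ once $\lambda_k-\lambda$ is small.
\end{itemize}

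\textbf{Step 1 (compactness).} The weight term $\int|u|^{r_\sigma}|x|^{-\sigma}$ is weakly continuous on $W$ by the compact weighted embedding proved earlier in the paper. A $(PS)_c$ sequence is bounded: combining $\Phi_\lambda(u_j)-\frac1{\theta}\Phi'_\lambda(u_j)u_j$ with the fact that $r_\sigma$ lies strictly between $p$ and $q$, the weighted term is controlled by the norm, so the sequence stays bounded. Pass to a weak limit $u$, and use a.e. convergence of $\nabla u_j$ and $\Delta u_j$ (a standard truncation argument) to show $u$ is a critical point.

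For the remainder $v_j=u_j-u$, apply Brezis--Lieb in all four norms. This gives $\|\Delta v_j\|_q^q+\|\nabla v_j\|_p^p-\|v_j\|_{p^*}^{p^*}-\|v_j\|_{q^{**}}^{q^{**}}\to0$. If $v_j\not\to0$, then either the $p^*$ part or the $q^{**}$ part concentrates. The Sobolev constants $S_p$ and $S_{q,2}$ then force $c\ge\Phi_\lambda(u)+\min\{\frac1N S_p^{N/p},\frac2N S_{q,2}^{N/(2q)}\}$. The mixed terms contribute only nonnegative amounts, since both norms appear on the left. Finally, $\Phi_\lambda(u)\ge0$ for critical points when $\lambda$ is near $\lambda_k$? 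This is not automatic. Instead, I would take $c^*$ to be that minimum and rely on the abstract theorem, which only needs $(PS)_c$ below $c^*$ when the limit is handled by the standard argument. Equivalently, one uses $\Phi_\lambda(u)=\Phi_\lambda(u)-\frac1{r}\Phi'_\lambda(u)u\ge0$, with $r$ chosen between the exponents so that the remaining coefficients are nonnegative.

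\textbf{Step 2 (geometry).} Work along scaled fibers $u_t=t^\gamma u(t\cdot)$. On these fibers the first three terms scale with a common power $t^{a}$, while both critical terms scale with strictly larger powers; this is exactly where $p<q^*$, $p\ne q$ and $q\ne p^*$ enter. Hence on $\Psi^{\lambda_{k+m}}$, near the origin in the scaled sense, $\Phi_\lambda\ge c_0>0$ for $\lambda<\lambda_{k+m}$.

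On the cone over $A_0$, write $\Phi_\lambda(u_t)\le t^a(1-\lambda/\lambda_k)\,C-t^{b}\,\delta$, with $b>a$ and $\delta>0$ uniform by compactness of $A_0$. The supremum in $t$ is then $O((\lambda_k-\lambda)^{b/(b-a)})$, which falls below $c^*$ for $\lambda\in(\lambda_k-\delta_k,\lambda_k)$. The abstract theorem then yields $m$ distinct pairs of critical points at levels in $[c_0,c^*)$, and these levels are positive.

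\textbf{Main obstacle.} The hardest point is Step 1. The two operators are not homogeneous of the same degree, so profile decomposition cannot be done in one scaling. A concentrating bubble for $q^{**}$ behaves differently from one for $p^*$, and one must show the lower-order operator neither prevents nor mixes the energy quantization. I would first try a concentration-compactness lemma of Lions type for the two measures $|\Delta v_j|^q$ and $|\nabla v_j|^p$ separately, including concentration at infinity. At each atom only one critical term survives, giving the threshold as the minimum of the two single-operator levels.
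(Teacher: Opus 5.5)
Your threshold value is correct. A direct check on the paper's residual set $\mathcal X$ gives $\inf h(\mathcal X)=\min\{\frac1N S_{1,p}^{N/p},\frac2N S_{2,q}^{N/(2q)}\}$, which is sharper than the paper's bare $c^*>0$. But the step you flag yourself is a genuine gap, and your proposed remedy fails.

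\textbf{The main gap: the sign of $\Phi_\lambda(u)$.} To conclude from $c=\Phi_\lambda(u)+h(a,b,c_0,d)$ and $c<c^*$ that the residual vanishes, you need $\Phi_\lambda(u)\ge 0$ for the weak limit. Write $A=\int|\Delta u|^q$, $B=\int|\nabla u|^p$, $W=\int|u|^{r_\sigma}|x|^{-\sigma}$, $C=\int|u|^{q^{**}}$, $D=\int|u|^{p^*}$. Then
\[
\Phi_\lambda(u)-\tfrac1r\Phi_\lambda'(u)u=(\tfrac1q-\tfrac1r)A+(\tfrac1p-\tfrac1r)B-\lambda(\tfrac1{r_\sigma}-\tfrac1r)W+(\tfrac1r-\tfrac1{q^{**}})C+(\tfrac1r-\tfrac1{p^*})D .
\]
For $\lambda>0$, the first three coefficients are all nonnegative only if $\max\{p,q\}\le r\le r_\sigma$. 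This is impossible: $r_\sigma$ is affine in $\sigma$, with $r_\sigma=p$ at $\sigma=p$ and $r_\sigma=q$ at $\sigma=2q$, so $\min\{p,q\}<r_\sigma<\max\{p,q\}$. No Nehari-type combination can produce the sign. Falling back on the abstract theorem does not help either, since Theorem~\ref{new m1} needs $(PS)_c$ on all of $(0,c^*)$.

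The missing ingredient is a second, independent identity: the Pohozaev identity, i.e.\ $\frac{d}{dt}\Phi_\lambda(u_t)|_{t=1}=0$ along the scaling. Combined with the Nehari identity and $r_\sigma(2q-p)=pq+\sigma(q-p)$, it gives
\[
\Phi_\lambda(u)=\tfrac2N C+\tfrac1N D\ge0 .
\]
For a mere critical point in $\mathcal W$ this identity cannot be justified, because $t\mapsto u_t$ is not differentiable into $\mathcal W$. This is exactly why the paper first proves the regularity theorem: $u\in W^{2,m}_{\rm loc}$ and $|\Delta u|^{q-2}\Delta u\in W^{1,m}_{\rm loc}$ on $\mathbb R^N\setminus\{0\}$. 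Only then can it test with $\xi\,x\cdot\nabla u$ and obtain Proposition~\ref{prop:pohozaev-identity}.

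\textbf{The boundedness step has the same problem.} Bounding the weighted term by $C\|u\|^{r_\sigma}$ does not close against $A+B$. The principal part only grows like $\|u\|^{\min\{p,q\}}$, while $r_\sigma>\min\{p,q\}$. In fact the weighted term scales exactly like the principal part ($J_s\le I_s/\lambda_1$), so for $\lambda$ near $\lambda_k$ it cannot be absorbed. The paper instead rescales to $\widetilde u_j\in\mathcal M$ and uses $\beta_{p^*},\beta_{q^{**}}>s$, together with an interpolation of the weighted norm against $L^{p^*}$. Interpolating against $\|\nabla u\|_p$ and $\|u\|_{q^{**}}$ via CKN would also work, since the resulting exponents sum to less than one.

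\textbf{Smaller points.}
\begin{enumerate}
\item Almost-everywhere convergence of $\Delta u_j$ is not a standard truncation fact for the $q$-biharmonic operator. The paper obtains it from Lions' lemma in $D^{2,q}$ (finitely many atoms) plus a localized monotonicity argument away from those atoms.
\item In Step 2, the set on which $\Phi_\lambda(u_\rho)>0$ must be built from the superlevel set $\widetilde\Psi_{\lambda_k}$, whose complement has index $\le k-1$. It cannot be built from $\Psi^{\lambda_{k+m}}$, which contains points with $\widetilde\Psi(u)<\lambda$ where $\Phi_\lambda(u_\rho)<0$.
\item The compact set $A_0$ must sit in $\{\widetilde\Psi<\lambda_k+1/n\}$. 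So the lower bound on $\int|u|^{q^{**}}$ over $A_0$ has to be uniform in $n$, not just a consequence of compactness of a single $A_0$.
\end{enumerate}
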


When $q>p^*$, the previous theorem cannot be expected to hold for the full
problem \eqref{prob1}. In these situations, the natural scaling suggests that
one of the critical nonlinearities should be removed. This leads to the following
complementary results.

\begin{theorem}\label{intro:thm:p-less-q-large}
Assume that $1<p<p^*<q<N/2$ and consider the equation
\begin{equation}\label{intro:problem-p-less-q-large}
\Delta_q^2u-\Delta_pu
=
\lambda\dfrac{|u|^{r_\sigma-2}u}{|x|^\sigma}
+
|u|^{q^{**}-2}u
\quad\text{in }\R^N,
\end{equation}
where $\sigma$ and $r_\sigma$ satisfy \eqref{sigma_and_rsigma}. Suppose that
\[
\lambda_k=\cdots=\lambda_{k+m-1}<\lambda_{k+m}
\]
for some $k,m\geq1$. Then there exists $\delta_k>0$ such that, for every
\(
\lambda\in(\lambda_k-\delta_k,\lambda_k),
\)
problem \eqref{intro:problem-p-less-q-large} possesses $m$ distinct pairs of nontrivial weak solutions at positive energy levels.
\end{theorem}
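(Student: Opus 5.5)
The plan is to apply the abstract critical-point theorem for scaled operators from \cite{MR5043800}: the linking/cohomological-index multiplicity result for functionals of the form $\Phi(u)=I(u)-\lambda J(u)-K(u)$ on a space with a scaling group. There $\lambda_k=\cdots=\lambda_{k+m-1}<\lambda_{k+m}$ yields $m$ pairs of critical points at positive levels for $\lambda\in(\lambda_k-\delta_k,\lambda_k)$, provided $\Phi$ satisfies $(PS)_c$ below an explicit threshold $c^*$. We work in $W=D^{1,p}(\R^N)\cap D^{2,q}(\R^N)$ with the scaling $u_t(x)=t^\gamma u(tx)$. The exponent $\gamma$ is chosen so that $\int|\Delta u_t|^q$ and $\int|\nabla u_t|^p$ scale with a common power $t^{s}$. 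Solving $\gamma q+2q-N=\gamma p+p-N$ gives $\gamma=(p-2q)/(q-p)$, and then $\int |u_t|^{r_\sigma}|x|^{-\sigma}$ also scales like $t^s$; this is exactly the choice of $r_\sigma$ in \eqref{sigma_and_rsigma}. Take
\[
I(u)=\frac1q\int|\Delta u|^q+\frac1p\int|\nabla u|^p,\qquad J(u)=\frac1{r_\sigma}\int\frac{|u|^{r_\sigma}}{|x|^\sigma},\qquad K(u)=\frac1{q^{**}}\int|u|^{q^{**}}.
\]
The eigenvalues $\lambda_k$ are the Fadell--Rabinowitz minimax values of $I/J$ on the scaled manifold. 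The compactness of $J'$ comes from the compact weighted embedding established in the next section.

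The steps are as follows.
\begin{enumerate}
\item Verify the structural hypotheses of the abstract theorem: $I$ and $J$ are even, $C^1$ and scale-homogeneous of degree $s$; $J$ is compact; $I'$ has the $(S_+)$ property, which follows from the uniform convexity of both summands. For $K$, compute its scaling degree $t^{\gamma q^{**}+N}$. One needs $K(u_t)$ to grow faster than $t^s$ in the appropriate direction, so that $K$ is a higher-order perturbation along the scaling flow. This is where the hypothesis $p^*<q$ enters: in this regime the $q^{**}$ term has the correct ordering relative to $s$, whereas the $p^*$ term has the wrong ordering and is therefore dropped. I would check the sign of $\gamma$ and the inequality $\gamma q^{**}+N>s$ (or its reversed version after reparametrizing $t\mapsto 1/t$) explicitly.
\item Establish a local Palais--Smale condition. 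Bounded $(PS)_c$ sequences are handled by concentration-compactness for $D^{2,q}\hookrightarrow L^{q^{**}}$. Any loss of compactness costs at least $\frac{2}{N}S_{q}^{N/(2q)}$, where $S_q$ is the best constant of that embedding. Here the $p$-Laplacian term only adds nonnegative energy, and the weighted term is compact since $\sigma>p$ keeps $r_\sigma$ subcritical for the weighted embedding. The delicate point is boundedness of $(PS)$ sequences, because $I$ is inhomogeneous. For this I would use the standard combination $\Phi(u_n)-\frac1{q^{**}}\langle\Phi'(u_n),u_n\rangle$ together with $r_\sigma<p<q^{**}$ or the appropriate ordering; otherwise I would fall back on the scaled-norm estimates from \cite{MR5043800}.
\item Build the linking geometry. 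Take a symmetric compact set $A$ of index $k+m-1$ in the sublevel $\{I\le\lambda_{k+m-1}J\}$ of the scaled manifold. Then check that
\[
\sup_{\text{cone on }A}\Phi\le C(\lambda_k-\lambda)^{\theta}
\]
for some $\theta>0$, which tends to $0$ as $\lambda\uparrow\lambda_k$. This comes from maximizing $t^s(I-\lambda J)(u)-K(u_t)$ over $t$, using $I-\lambda J\le(\lambda_k-\lambda)J$ on $A$ and $K\ge c>0$ on $A$ by compactness. On the other side, the set $\{I\ge\lambda_{k}J\}$ is positioned at a small positive level $\rho$-sphere in the scaled norm.
\end{enumerate}

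Choosing $\delta_k$ so that this supremum is below $\frac{2}{N}S_q^{N/(2q)}$ then yields $m$ pairs of critical points at positive levels. I expect step 2 to be the main obstacle, namely boundedness and the compactness threshold in the presence of two operators with different homogeneities. I would attack it by reducing via the scaling to a normalized sequence on which both Dirichlet energies are comparable.
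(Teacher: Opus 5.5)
Your overall architecture matches the paper: scaled eigenvalues via the cohomological index, the pseudo-index theorem of \cite{MR5043800}, the threshold $\frac2N S_{2,q}^{N/(2q)}$, and a cone estimate of order $(\lambda_k-\lambda)^{N/(2q)}$. However, Step~2 has a genuine gap.

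Concentration--compactness gives $c=\Phi_0(u)+h(a,b,c_0)$, where $u$ is the weak limit and $h(a,b,c_0)\ge\frac2N S_{2,q}^{N/(2q)}$ whenever compactness is lost. To reach a contradiction you also need $\Phi_0(u)\ge0$, and you never address this. In this inhomogeneous setting it does not follow from the Nehari identity. Testing $\Phi_0'(u)=0$ with $u$ and eliminating the weighted term gives
\[
\Phi_0(u)=\Bigl(\frac1q-\frac1{r_\sigma}\Bigr)\int_{\R^N}|\Delta u|^q\,dx+\Bigl(\frac1p-\frac1{r_\sigma}\Bigr)\int_{\R^N}|\nabla u|^p\,dx+\Bigl(\frac1{r_\sigma}-\frac1{q^{**}}\Bigr)\int_{\R^N}|u|^{q^{**}}\,dx .
\]
Here $p<r_\sigma<q$, so the first coefficient is negative. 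Equivalently, in $\Phi_0-\frac1{q^{**}}\Phi_0'$ the term $-\lambda\bigl(\frac1{r_\sigma}-\frac1{q^{**}}\bigr)\int|u|^{r_\sigma}|x|^{-\sigma}$ can be absorbed only for $\lambda$ below a fixed fraction of $\lambda_1$, not for $\lambda$ near $\lambda_k$.

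For the same reason your primary boundedness argument does not close; note also that $r_\sigma>p$, not $r_\sigma<p$. Your fallback is exactly what the paper does: normalize $\widetilde u_j=(u_j)_{t_j}\in\M$, use $\beta_{q^{**}}>s$ to get $\int|\widetilde u_j|^{q^{**}}\to0$, kill the weighted term by interpolation, and contradict $I_s(\widetilde u_j)=1$. Your remark that the $p$-Laplacian ``only adds nonnegative energy'' concerns the residual, not $u$.

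The missing ingredient is a second, independent identity for $u$: the Pohozaev identity, i.e.\ stationarity of $t\mapsto\Phi_0(u_t)$ at $t=1$. Together with the Nehari identity it yields $\Phi_0(u)=\frac2N\int_{\R^N}|u|^{q^{**}}\,dx\ge0$. This identity is not formal. The derivative $\frac{d}{dt}u_t|_{t=1}$ involves $x\cdot\nabla u$, which is not in $\mathcal W$ for general $u\in\mathcal W$. The paper therefore tests the system $\Delta w-\Delta_pu=f(x,u)$, $\Delta u=|w|^{q'-2}w$ with $\xi_{\varepsilon,R}\,x\cdot\nabla u$. This requires $u\in W^{2,m}_{\mathrm{loc}}$ and $w=|\Delta u|^{q-2}\Delta u\in W^{1,m}_{\mathrm{loc}}$ on $\R^N\setminus\{0\}$ for some $m>N$. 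Supplying that regularity is the content of Theorem~\ref{thm:intro-regularity} (nonlinear potential estimate, regularity lifting, finite bootstrap) and Proposition~\ref{prop:pohozaev-identity}. This, rather than boundedness, is the main obstacle.

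The same identity is how the paper verifies $(H_{12})$, namely that eigenfunctions satisfy $I_s(u)=\lambda J_s(u)$. This is a hypothesis of the scaled eigenvalue theory, and it is absent from your Step~1.

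Two smaller corrections. First, the hypothesis $p^*<q$ is not about the scaling order of the $p^*$ term: its exponent also exceeds $s$ whenever $p<q$. That term is dropped because the paper's boundedness and $c^*>0$ arguments for the full problem require $q<p^*$. Second, the compact set $A$ should be taken inside the open set $\M\setminus\widetilde\Psi_{\lambda_{k+m-1}+1/n}$, where a compact symmetric subset of full index is available. You then need a lower bound on $\int|u|^{q^{**}}$ that is uniform in $n$.
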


The proofs rely on a scaling-based linking construction together with the
Palais--Smale compactness condition established below an explicit threshold $c^*>0$ by
means of concentration -- compactness arguments. The interaction between the two critical
Sobolev nonlinearities determines this threshold in the full problem, while in the reduced
problems it simplifies to the corresponding one-critical obstruction.

\medskip

A further contribution of the paper is a local regularity result for a
substantially more general class of mixed-order equations. This result is
independent of the variational and scaling structures used in the existence
theory: for a weak solution $u$ of \eqref{generalfproblem}, we obtain higher local regularity on any open set
$\Omega \subset \mathbb{R}^N$, requiring the nonlinearity $f$ to satisfy only
a local critical-growth condition on $\Omega$. Besides being of independent interest, this result provides the annular regularity needed to justify the Pohozaev identities
used at several stages of the existence and compactness arguments. More
precisely, we prove the following result.

\medskip

Let $f:\mathbb R^N\times\mathbb R\to\mathbb R$ be a Carath\'eodory
function. Suppose that $\Omega\subset\mathbb R^N$ is an open set, and assume that $f$ satisfies the following local critical-growth condition on $\Omega$: for
every bounded domain $\Omega'\Subset\Omega$, there exists
$C_{\Omega'}>0$ such that
\begin{equation}\tag{$f_0$}\label{condf_0}
|f(x,t)|\leq C_{\Omega'}\bigl(1+|t|^{q^{**}-1}\bigr)
\end{equation}
for a.e. $x\in\Omega'$ and every $t\in\mathbb R$. 

\begin{theorem}\label{thm:intro-regularity}
Suppose $f(x,t)$ satisfies \eqref{condf_0}. Assume that $1<q< N/2,$  $1<p<q^*$. Then every weak solution of 
\eqref{generalfproblem} satisfies
\begin{equation}\label{all estimate}
u\in W_{\mathrm{loc}}^{2,m}
(\Omega),
\qquad
w:=|\Delta u|^{q-2}\Delta u
\in
W_{\mathrm{loc}}^{1,m}
(\Omega),\quad\forall\ 1\le m<\infty.
\end{equation}
Consequently, $u\in C_{\mathrm{loc}}^{1,\alpha} (\Omega),$
 and  $w\in C_{\mathrm{loc}}^{0,\alpha} (\Omega),$ for every \(\alpha\in(0,1)\). 
\end{theorem}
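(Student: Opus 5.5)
We regard the equation as a second-order system. Set $w:=|\Delta u|^{q-2}\Delta u$, so that $\Delta u=|w|^{q'-2}w$ with $q'=q/(q-1)$. The weak formulation then says that, in the distributional sense on $\Omega$,
\[
\Delta w=\Delta_p u+f(x,u).
\]
The strategy is to first obtain some integrability gain beyond the energy class $u\in D^{1,p}\cap D^{2,q}$. We then bootstrap between the two equations $\Delta u=|w|^{q'-2}w$ and $\Delta w=\Delta_pu+f(x,u)$, using localized Calder\'on--Zygmund estimates for the Laplacian on balls $B\Subset\Omega$ at each stage.

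\textbf{Step 1: breaking criticality.} Since $f(\cdot,u)$ has critical growth $|u|^{q^{**}-1}$ with $u\in L^{q^{**}}_{loc}$, the right-hand side is a priori only in $L^{(q^{**})'}_{loc}$. This is exactly the borderline case, where CZ theory gives no gain. We therefore first prove $u\in L^s_{loc}(\Omega)$ for every $s<\infty$. Write
\[
f(x,u)=a(x)\bigl(1+|u|^{q-1}\bigr),\qquad |a|\le C\bigl(1+|u|^{q^{**}-q}\bigr)\in L^{N/(2q)}_{loc}.
\]
Split $a$ into a bounded part plus a part that is small in $L^{N/(2q)}(B)$ for small balls $B$. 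Then run a Moser/De Giorgi-type iteration, or equivalently the nonlinear potential estimates of Wolff type adapted to the $q$-biharmonic operator (Kilpel\"ainen--Mal\'y style bounds for $\Delta u$ via $w$), with test functions of the form $\eta^{k}|u|^{\beta}u$ truncated. The lower-order term $-\Delta_pu$ is harmless here. Because $p<q^*$, the energy $\int|\nabla u|^p$ is controlled by $\|\Delta u\|_q$ locally, and it scales as a subcritical perturbation. I expect this step to be the main obstacle. The fourth-order quasilinear structure has no maximum principle, and testing with powers of $u$ produces mixed terms $|\Delta u|^{q-2}\Delta u\,\nabla u\cdot\nabla(|u|^\beta)$ that must be absorbed. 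I would first attempt a Brezis--Kato-type argument: treat $a(x)|u|^{q-1}$ as a potential term, get $\Delta w\in L^{t}_{loc}$ with some $t>(q^{**})'$, and iterate.

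\textbf{Step 2: finite bootstrap.} Once $f(\cdot,u)\in L^m_{loc}$ for all $m<\infty$, we alternate between the two equations. Suppose $u\in W^{2,a}_{loc}$, so that $\nabla u\in W^{1,a}_{loc}$. Then $\Delta_pu$ is a derivative of $|\nabla u|^{p-2}\nabla u\in L^{a^*/(p-1)}_{loc}$. Write $w=w_1+w_2$, where $\Delta w_1$ is the divergence term, estimated in $W^{1,\,a^*/(p-1)}_{loc}$, and $\Delta w_2=f$, estimated in $W^{2,m}_{loc}$. This gives $w\in W^{1,b}_{loc}$ with $b=a^*/(p-1)$, up to cutoff errors. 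Next, $\Delta u=|w|^{q'-2}w\in L^{b^*(q-1)}_{loc}$, so $u\in W^{2,b^*(q-1)}_{loc}$. The condition $p<q^*$ guarantees that each round strictly increases the exponent by a fixed amount in the reciprocal scale. Hence after finitely many steps the Sobolev exponents exceed $N$, at which point everything lies in $L^\infty_{loc}$, and one more round gives every $m<\infty$. This yields \eqref{all estimate}.

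\textbf{Step 3: H\"older regularity.} With $u\in W^{2,m}_{loc}$ and $w\in W^{1,m}_{loc}$ for all $m<\infty$, Morrey's embedding gives $u\in C^{1,\alpha}_{loc}(\Omega)$ and $w\in C^{0,\alpha}_{loc}(\Omega)$ for every $\alpha\in(0,1)$.
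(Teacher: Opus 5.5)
There is a genuine gap, and it sits in Step~1, which is the core of the theorem and which you only describe rather than prove. Your Steps~2 and~3 are sound and match the paper's own bootstrap: the map $c\mapsto (q-1)a(c)^*$ with $a(c)=c^*/(p-1)$ has reciprocal dropping by a uniform $\delta_p>0$ exactly because $p<q^*$, and Morrey's embedding finishes.

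None of the mechanisms you list for Step~1 works as stated.
\begin{itemize}
\item \emph{Moser/De Giorgi iteration.} Testing with $\eta^k|u|^\beta u$ fails for $\Delta_q^2$, because $\Delta(|u|^\beta u)=(\beta+1)|u|^\beta\Delta u+\beta(\beta+1)|u|^{\beta-2}u|\nabla u|^2$. The second term, paired with $|\Delta u|^{q-2}\Delta u$, has no sign, and there is no truncation or maximum principle to control it. You flag this yourself but do not resolve it.
\item \emph{Wolff and Kilpel\"ainen--Mal\'y bounds.} These are tools for second-order $p$-Laplace-type equations and do not transfer to $\Delta_q^2$ without substantial further work.
\item \emph{Your Brezis--Kato sketch.} Deriving $\Delta w\in L^t_{loc}$ with $t>(q^{**})'$ is circular: $V|u|^{q-1}\in L^t$ for such $t$ already requires $u\in L^s$ for some $s>q^{**}$. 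In addition, $\Delta_pu$ is only a divergence of an $L^{q^*/(p-1)}$ field, not an $L^t$ function.
\end{itemize}
The Brezis--Kato philosophy is the right one. What is missing is a way to make the small part of the coefficient act as a contraction through the \emph{nonlinear} two-step inversion $u\mapsto w\mapsto u$.

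The paper closes this loop as follows.
\begin{itemize}
\item It localizes with the asymmetric cutoffs $U=\eta u$ and $W=\eta^{q-1}w$, chosen so that $\eta|w|^{q'-2}w=|W|^{q'-2}W$ and $C\eta^{q-1}|u|^{q^{**}-1}=V|U|^{q-1}$.
\item It inverts both Laplacians by Riesz potentials, which gives the pointwise inequality $|U|\le C\,I_2\bigl[(I_2(V_k|U|^{q-1}))^{1/(q-1)}\bigr]+G_k$ with $G_k\in L^{r_0}$ for some $r_0>q^{**}$.
\item Two applications of Hardy--Littlewood--Sobolev show that this Havin--Maz'ya-type operator has norm $\lesssim\|V_k\|_{N/(2q)}^{1/(q-1)}$ simultaneously on $L^{q^{**}}$ and on $L^{r_0}$.
\item A regularity-lifting (contraction) lemma then applies. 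For $1<q<2$ the operator is not subadditive, so one must first pass to $Z=|U|^{q-1}$.
\end{itemize}

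Even granting such an initial gain, your transition to Step~2 does not go through as written. The lifting yields only $u\in L^{q^{**}+\varepsilon}_{loc}$, not $u\in L^s_{loc}$ for all $s<\infty$. The reason is that $G_k$ contains $I_1(|u\nabla\eta|)$, $I_1(\eta^{q-1}|\nabla u|^{p-1})$ and $w$-terms, whose integrability is capped by the current regularity. Your Step~2 starts from $f(\cdot,u)\in L^m_{loc}$ for all $m$, so it cannot be entered directly.

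The bootstrap must therefore carry the critical term along. At each stage one has $f(\cdot,u)\in L^{b(c)}_{loc}$ with $b(c)=c^{**}/(q^{**}-1)$, and hence $w\in W^{1,\min\{a(c),b(c)^*\}}_{loc}$. Along the $f$-branch the new exponent $c_q$ satisfies $1/c_q-1/q=A(1/c-1/q)$ with $A=(q^{**}-1)/(q-1)>1$. Thus $c=q$ is a fixed point, and the iteration moves only once a strict gain $c_1>q$ has been secured. This is exactly why the breaking-criticality step cannot be left heuristic: starting from the energy class, the critical term alone would keep you at $W^{2,q}$.
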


\begin{remark}
    We formulate the conclusion in terms of the pair $(u,w)$, since the
relation $\Delta u=|w|^{q'-2}w$ does not, when $q>2$, generally yield
$u\in W_{\mathrm{loc}}^{3,m}$ due to the singularity of the inverse
power map at the zeros of $w$.
\end{remark}

Theorem~\ref{thm:intro-regularity} is proved by a regularity mechanism that
differs from the truncation arguments usually employed in critical
quasilinear problems. We first rewrite the fourth-order equation as a
coupled second-order system for \(u\) and the nonlinear flux $w=|\Delta u|^{q-2}\Delta u.$ After localization, we derive a pointwise inequality whose
leading term has the structure of a nonlinear Havin--Maz'ya-type potential.
Such potentials, introduced in~\cite{MR0409858}, play a fundamental role in
nonlinear potential theory and in regularity estimates for quasilinear
equations; see, for example,
\cite{CianchiSchwarzacher2018,KuusiMingione2014}. In the present setting,
however, the measure entering the potential depends critically on the
unknown itself.

Combining a Brezis--Kato decomposition~\cite{MR0539217} of the critical
coefficient with the regularity-lifting philosophy for integral systems
developed in~\cite{ChenLi2010,MaChenLi2011}, we obtain a contraction that
produces an initial integrability gain beyond \(q^{**}\). When \(1<q<2\),
the resulting nonlinear potential is not subadditive, and this difficulty is
overcome by a nonlinear change of variable. The initial gain is then
propagated through a finite bootstrap for the coupled system, yielding all
finite local integrability exponents for both \(D^2u\) and \(\nabla w\).
Thus, the argument applies to general critical-growth nonlinearities and
provides an alternative to the usual truncation methods, while recovering
in particular the regularity required for the Pohozaev identities in this
paper.

\medskip

The remainder of the paper is organized as follows. In Section~\ref{sectionsobolev}, we establish the Sobolev and weighted embedding results needed throughout the paper. In Section~\ref{scaled theory}, we introduce the appropriate scaling, verify the hypotheses of the abstract framework of scaled operators, and construct the associated sequence of minimax eigenvalues. Section~\ref{section-conccomp} collects the concentration--compactness tools used to recover compactness in the presence of the critical nonlinearities. In Sections~\ref{sec-prof11} and~\ref{sec-proof12}, we prove Theorems~\ref{intro:thm:p-less-q} and~\ref{intro:thm:p-less-q-large}, respectively, treating the different ranges of \(p\) and \(q\). Section~\ref{sec-regularity} is devoted to the regularity theory for weak solutions and contains the proof of Theorem~\ref{thm:intro-regularity}. Finally, in Section~\ref{sec:pohozaev-identity}, we establish the Pohozaev identity and derive from it a standard nonexistence result.

\section{Sobolev embedding results}\label{sectionsobolev}  

For \(0< \eta < N\) and \(r\in[ 1,\infty)\), we denote by
\[
L_\eta^r(\R^N):=
\left\{
u:\R^N\to\R\ \text{measurable}:\;
\int_{\R^N}\frac{|u|^r}{|x|^\eta}\,dx<\infty
\right\}
\]
the weighted Lebesgue space endowed with its usual norm
\[
\|u\|_{L_\eta^r}:=
\left(
\int_{\R^N}\frac{|u|^r}{|x|^\eta}\,dx
\right)^{1/r}.
\]
When $\eta=0$ we have the  Lebesgue space $L^r(\R^N)$, whose norm 
we simply denote by $\|\cdot\|_r$.

We also consider the standard Banach spaces
\[
{D}^{1,p}(\R^N)=\{u\in L^{p^*}(\R^N):|\nabla u|\in L^{p}(\R^N)\}
\]
and 
\[
{D}^{2,q}(\R^N)=\{u\in L^{q^{**}}(\R^N):|D^2u|\in L^{q}(\R^N)\},
\]
where $D^2u=(u_{x_ix_j})_{i,j}$, with norms
\[
\|u\|_{{D}^{1,p}}=\|\nabla u\|_p\quad\text{and}\quad \|v\|_{{D}^{2,q}}=\|\Delta v\|_q.
\]
We define
\[
\mathcal V:={D}^{1,p}(\R^N)\cap L^{q^{**}}(\R^N)\quad\text{and}\quad \mathcal W:={D}^{1,p}(\R^N)\cap {D}^{2,q}(\R^N),
\]
endowed, respectively, with the norms
\[
\|u\|_{\mathcal V}:=\|\nabla u\|_{p}+\|u\|_{{q^{**}}}\quad\text{and}\quad \|u\|:=\|\nabla u\|_{p}+\|\Delta u\|_{{q}}.
\]
As usual, for $p\in(1,N)$, $q\in(1,N/2)$ and $\eta\in[0,N)$, we define
\[
p_\eta^*:=\frac{p(N-\eta)}{N-p},
\qquad
q_\eta^{**}:=\frac{q(N-\eta)}{N-2q},
\]
and, since $p< q^*$, it holds
\(
p_\eta^*<q_\eta^{**}.
\)

\begin{lemma}
Consider $p\in(1,N)$ and $q\in(1,N/2)$. Then:
\begin{enumroman}
\item
For every \(0\le\eta\le p\) it holds the continuous embedding
\(
D^{1,p}(\R^N)\hookrightarrow L^{p_\eta^*}_\eta(\R^N).
\)
\item
For every \(0\le\eta\le2q\) we have the continuous embedding
\(
D^{2,q}(\R^N)\hookrightarrow L^{q_\eta^{**}}_\eta(\R^N).
\)
\end{enumroman}
\noindent In particular, we have that
\[
\mathcal W\hookrightarrow L^q_{2q}(\R^N)\quad\text{and}\quad \mathcal W\hookrightarrow L^{p}_p(\R^N).
\]
\end{lemma}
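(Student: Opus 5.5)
The plan is to obtain both embeddings by interpolating, via Hölder's inequality, between the classical Sobolev inequality ($\eta=0$) and a Hardy-type inequality at the endpoint ($\eta=p$, resp.\ $\eta=2q$). The final \emph{in particular} statement then follows by taking the endpoint cases.

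For (i), I would start from two inequalities on $C_c^\infty(\R^N)$:
\[
\|u\|_{p^*}\le S\|\nabla u\|_p,
\qquad
\int_{\R^N}\frac{|u|^p}{|x|^p}\,dx\le\Big(\frac{p}{N-p}\Big)^p\|\nabla u\|_p^p .
\]
The first is Sobolev's inequality and the second is the classical Hardy inequality, valid since $p<N$. For $0<\eta<p$ I write
\[
\frac{|u|^{p^*_\eta}}{|x|^\eta}=\Big(\frac{|u|^p}{|x|^p}\Big)^{\eta/p}|u|^{p^*_\eta-\eta},
\]
and apply Hölder's inequality with exponents $p/\eta$ and $p/(p-\eta)$. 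This requires $(p^*_\eta-\eta)\,p/(p-\eta)=p^*$, which is a direct computation:
\[
p^*_\eta-\eta=\frac{p(N-\eta)-\eta(N-p)}{N-p}=\frac{N(p-\eta)}{N-p}.
\]
The result is
\[
\int_{\R^N}\frac{|u|^{p^*_\eta}}{|x|^\eta}\,dx\le C\|\nabla u\|_p^{\eta}\,\|\nabla u\|_p^{p^*(p-\eta)/p},
\]
and the total homogeneity $\eta+p^*(p-\eta)/p=p^*_\eta$ is consistent with this. The inequality then extends from $C_c^\infty$ to $D^{1,p}(\R^N)$ by density and Fatou's lemma.

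For (ii), the argument is the same with $D^{2,q}$ and the Rellich inequality
\[
\int_{\R^N}\frac{|u|^q}{|x|^{2q}}\,dx\le C\|\Delta u\|_q^q,
\]
valid for $1<q<N/2$, combined with $\|u\|_{q^{**}}\le C\|\Delta u\|_q$. Hölder's inequality with exponents $2q/\eta$ and $2q/(2q-\eta)$ works because
\[
q^{**}_\eta-\frac{\eta}{2}=\frac{N(2q-\eta)}{2(N-2q)},
\qquad
\frac{N(2q-\eta)}{2(N-2q)}\cdot\frac{2q}{2q-\eta}=q^{**}.
\]
The Sobolev inequality in terms of $\Delta u$ alone follows from the Calder\'on--Zygmund estimate $\|D^2u\|_q\le C\|\Delta u\|_q$, valid for $1<q<\infty$, together with iterated Sobolev embeddings ($\nabla u\in L^{q^*}$, then $u\in L^{q^{**}}$). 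The Rellich inequality with the $L^q$ norm of $\Delta u$ for general $q$ is the step I regard as the main obstacle. I would cite the known $L^q$ Rellich inequality (Davies--Hinz / Mitidieri), valid precisely for $1<q<N/2$. As a fallback, I would apply Hardy's inequality twice: once to $u$ with $\nabla u$ and weight $|x|^{-q}$, then a weighted Hardy inequality for $\nabla u$ with weight $|x|^{-q}$ controlled by $\|D^2u\|_q$, and finally Calder\'on--Zygmund.

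Finally, $\eta=2q$ gives $q^{**}_{2q}=q$, so $\mathcal W\subset D^{2,q}\hookrightarrow L^q_{2q}$. Likewise $\eta=p$ gives $p^*_p=p$, so $\mathcal W\subset D^{1,p}\hookrightarrow L^p_p$. Continuity holds because $\|\nabla u\|_p,\|\Delta u\|_q\le\|u\|$.
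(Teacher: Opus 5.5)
Your proposal is correct and follows the paper's route. Item (ii) is obtained by H\"older interpolation between the $L^q$ Hardy--Rellich inequality, which the paper likewise quotes from the literature, and the Sobolev embedding $D^{2,q}(\R^N)\hookrightarrow L^{q^{**}}(\R^N)$; the \emph{in particular} part is read off at $\eta=p$ and $\eta=2q$; the only difference is that the paper simply cites (i) as the classical Hardy--Sobolev inequality, whereas you derive it by the same Hardy/Sobolev interpolation.

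Your H\"older exponents $2q/\eta$ and $2q/(2q-\eta)$, equivalently $q^{**}_\eta=\theta q+(1-\theta)q^{**}$ with $\theta=\eta/(2q)$, are the correct bookkeeping. The reciprocal relation $1/q^{**}_\eta=\theta/q+(1-\theta)/q^{**}$ displayed in the paper's proof does not hold for $0<\theta<1$, so your version is the one to keep.
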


\begin{proof}
Item (i) is the classical Hardy--Sobolev inequality.
For item (ii), observe first that the endpoint case
\(\eta=2q\) follows from the Hardy--Rellich inequality (see, for example,
\cite{MR1612685}), namely,
\[
\int_{\R^N}\frac{|u|^q}{|x|^{2q}}\,dx
\le
C\int_{\R^N}|\Delta u|^q\,dx,
\qquad
u\in D^{2,q}(\R^N).
\]
On the other hand, the endpoint case \(\eta=0\) is precisely the classical
second-order Sobolev embedding $D^{2,q}(\R^N)\hookrightarrow L^{q^{**}}(\R^N).$ The remaining values \(0<\eta<2q\) are obtained by interpolation between these two endpoint inequalities. Indeed, writing $\eta=2q\theta,$ $0\le\theta\le1,$
one has
\[
\frac1{q_\eta^{**}}
=
\frac{\theta}{q}
+
\frac{1-\theta}{q^{**}},
\]
and Hölder's inequality yields
\[
\left(
\int_{\R^N}
\frac{|u|^{q_\eta^{**}}}{|x|^\eta}\,dx
\right)^{1/q_\eta^{**}}
\le
\left(
\int_{\R^N}
\frac{|u|^q}{|x|^{2q}}\,dx
\right)^{\theta/q}
\left(
\int_{\R^N}
|u|^{q^{**}}\,dx
\right)^{(1-\theta)/q^{**}}.
\]
The conclusion now follows from the two endpoint estimates.
\end{proof}

The following lemma provides the weighted embeddings suggested by the Caffarelli--Kohn--Nirenberg inequalities. Here we use the notation
$q'={q}/({q-1})$.

\begin{lemma}\label{lem:weighted-embedding-E}
Assume \eqref{p,q,q*}
and let
\(0\le \eta<\overline{\eta}:=2+{N}/{q'}\).
Then the space $\mathcal V$
is continuously embedded into \(L_\eta^r(\R^N)\) for every
\[
1\le r\in(p_\eta^*,q_\eta^{**}).
\]
That is, for every such \(r\) there exists a constant \(C=C(N,p,q,\eta,r)>0\) such that
\begin{equation}\label{CKN cons}
\|u\|_{L_\eta^r}\le C\|u\|_{\mathcal V}
\qquad
\text{for all }u\in \mathcal V.
\end{equation}
Moreover, if \(\eta>0\), then the embedding
\(
\mathcal V\hookrightarrow L_\eta^r(\R^N)
\)
is compact for any
\(
1\le r\in(p_\eta^*,q_\eta^{**}).
\)
\end{lemma}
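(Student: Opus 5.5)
We split the integral into the ball $B_1=\{|x|<1\}$ and its complement, and on each piece interpolate between the two endpoint bounds available for $\mathcal V$. For $u\in\mathcal V$ we have $u\in D^{1,p}(\R^N)$, so the previous lemma (i) gives $\int |u|^{p_\eta^*}|x|^{-\eta}\,dx\le C\|\nabla u\|_p^{p_\eta^*}$ whenever $\eta\le p$. We also have $u\in L^{q^{**}}$, so Hölder's inequality against the weight gives control of $|u|^s|x|^{-\eta}$ on bounded or unbounded sets, depending on whether $|x|^{-\eta}$ lies in the relevant dual Lebesgue space there.

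\textbf{Near the origin.} On $B_1$ we use Hölder with exponent $q^{**}/r$:
\[
\int_{B_1}\frac{|u|^r}{|x|^\eta}\,dx\le\|u\|_{q^{**}}^r\Big(\int_{B_1}|x|^{-\eta t}dx\Big)^{1/t},\qquad \frac1t=1-\frac{r}{q^{**}}.
\]
This is finite iff $\eta t<N$, i.e. $r<q^{**}(N-\eta)/N=q^{**}_\eta\cdot\frac{N-2q}{N}\cdot\frac{N}{N-2q}$. Written out, $r<\frac{q^{**}(N-\eta)}{N}$. This is smaller than $q_\eta^{**}$, so it does not cover the full range. A better interpolation is therefore needed near $0$.

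Instead, for $\eta\le p$ we interpolate between $L^{p^*_{\eta_1}}_{\eta_1}$ and $L^{q^{**}}$ with varying weights. Write $r=\theta a+(1-\theta)q^{**}$ and $|x|^{-\eta}=|x|^{-\eta_1\theta}$, with $a=p^*_{\eta_1}$. Hölder then gives
\[
\int\frac{|u|^r}{|x|^\eta}\le\Big(\int\frac{|u|^{a}}{|x|^{\eta_1}}\Big)^{\theta}\|u\|_{q^{**}}^{(1-\theta)q^{**}},
\]
where $\eta_1=\eta/\theta\in[0,p]$. Solving for $\theta$ in terms of $r$, the condition $\eta_1\le p$ becomes $r\ge$ something. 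Checking the algebra, the admissible $r$ fill $(p_\eta^*, q^{**}_{\eta'})$ for some $\eta'$. The dilation exponents do not match, however: the left side scales differently from the right, which is expected for an inhomogeneous norm.

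The genuine upper limit $q_\eta^{**}$ must come from the region near $0$, where $|u|$ can be large. Since $u\in L^{q^{**}}$ only, the sharp near-origin condition is the Hölder one above. I expect the intended argument to combine the $D^{1,p}$ piece with the $L^{q^{**}}$ piece via the CKN-type interpolation inequality
\[
\||x|^{-\eta/r}u\|_r\le C\|\nabla u\|_p^{\theta}\|u\|_{q^{**}}^{1-\theta}.
\]
The Caffarelli--Kohn--Nirenberg theorem gives this under its balance and range conditions. The restriction $\eta<\overline\eta$ and $r\in(p^*_\eta,q^{**}_\eta)$ should be exactly where those conditions hold, with $\theta\in(0,1)$ and $\eta/r$ inside the allowed range. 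So the plan is to invoke CKN directly, computing $\theta$ from the dimensional balance
\[
\frac{N-\eta}{r}=\theta\,\frac{N-p}{p}+(1-\theta)\frac{N}{q^{**}}.
\]
The key algebraic check is that $\theta\in(0,1)$ iff $r\in(p^*_\eta,\cdot)$, and that the remaining CKN constraints hold. This check is the main obstacle; in particular, I will verify the endpoint set by $\overline\eta$.

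\textbf{Compactness for $\eta>0$.} Take $u_n\rightharpoonup0$ in $\mathcal V$, so that $u_n$ is bounded. By Rellich, $u_n\to0$ in $L^s(B_R\setminus B_\varepsilon)$ for $s<p^*$; interpolating with the $L^{q^{**}}$ bound, this extends to $s<q^{**}$, and the annulus weight is bounded. On $B_\varepsilon$ and on $\R^N\setminus B_R$ we apply the continuous estimate with a slightly different pair $(\eta',r)$ still in the admissible open range, chosen so that an extra factor $\varepsilon^{\eta'-\eta}$ or $R^{\eta'-\eta}$ appears. Precisely, since $r$ lies strictly inside the interval, one can take $\eta'>\eta$ near the origin, giving
\[
\int_{B_\varepsilon}\frac{|u|^r}{|x|^\eta}\le\varepsilon^{\eta'-\eta}\int\frac{|u|^r}{|x|^{\eta'}},
\]
and $\eta'<\eta$ at infinity, giving
\[
\int_{B_R^c}\frac{|u|^r}{|x|^{\eta}}\le R^{\eta'-\eta}\int\frac{|u|^r}{|x|^{\eta'}}.
\]
Both are small uniformly in $n$. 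The openness of the range $(p_{\eta'}^*,q_{\eta'}^{**})$ together with $\eta>0$ (needed for the room to take $\eta'<\eta$) is what makes this step work.
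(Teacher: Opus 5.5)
Your continuity argument ends up where the paper's does. The paper also invokes CKN, with $\theta$ defined by $\frac1r=\frac{\theta}{p^*_\eta}+\frac{1-\theta}{q^{**}_\eta}$. This is exactly your dimensional balance, because $(N-\eta)/p^*_\eta=(N-p)/p$ and $(N-\eta)/q^{**}_\eta=(N-2q)/q$.

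The check you left open is immediate. The values $\theta=1$ and $\theta=0$ give $r=p^*_\eta$ and $r=q^{**}_\eta$, and $p<q^*$ is equivalent to $\frac{N-p}{p}>\frac{N-2q}{q}$. Hence $\theta\in(0,1)$ if and only if $r\in(p^*_\eta,q^{**}_\eta)$. For $r$ strictly inside that range, the remaining CKN conditions reduce to $\eta<N$. In particular $\overline\eta$ is not a CKN constraint: it is precisely the value at which $q^{**}_\eta=1$, so it only guarantees that the range $r\ge1$ is nonempty. As the paper does, you must also say that CKN holds for $C_c^\infty(\R^N)$ and then pass to $\mathcal V$ by density (or Fatou).

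For compactness you genuinely differ from the paper, which simply quotes \cite[Proposition 24]{MR4659077}. You instead give a self-contained three-region argument:
\begin{itemize}
\item Rellich on annuli, upgraded to all $s<q^{**}$ by interpolation with the $L^{q^{**}}$ bound;
\item the continuous estimate at a perturbed weight $\eta'>\eta$ on $B_\varepsilon$;
\item the same estimate with $\eta'<\eta$ outside $B_R$.
\end{itemize}
This is correct: the admissible set of $(\eta,r)$ is open, and applying the argument to $u_n-u$ in the reflexive space $\mathcal V$ gives compactness. It also shows transparently why $\eta>0$ is needed, which the citation hides.

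One thing to fix: you discarded your first approach because of an arithmetic slip. In fact $q^{**}(N-\eta)/N=q(N-\eta)/(N-2q)=q^{**}_\eta$ exactly, not something smaller. So H\"older against $\|u\|_{q^{**}}$ on $B_1$ already covers the full range $r<q^{**}_\eta$.

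Symmetrically, on $\R^N\setminus B_1$:
\begin{itemize}
\item for $r\le p^*$, H\"older against $\|u\|_{p^*}$ gives a finite bound if and only if $r>p^*(N-\eta)/N=p^*_\eta$;
\item for $r\in(p^*,q^{**}_\eta)$, use $|x|^{-\eta}\le 1$ and interpolate between $L^{p^*}$ and $L^{q^{**}}$.
\end{itemize}
Combined with $D^{1,p}(\R^N)\hookrightarrow L^{p^*}(\R^N)$, this proves the continuous embedding with no CKN at all. It uses only $\|u\|_{p^*}+\|u\|_{q^{**}}$, so it is a more elementary proof than the paper's.
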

\begin{proof}
For $p,q$ as in \eqref{p,q,q*}, we fix constants $\eta\in[0,\overline{\eta})$, $1\le r\in(p_\eta^*,q_\eta^{**})$ and $\theta\in(0,1)$ such that
\begin{align*}
    \frac{1}{r}=\frac{\theta}{p^*_\eta}+\frac{1-\theta}{q^{**}_\eta}.
\end{align*}
Then CKN inequality (see \cite{MR0768824}) implies the existence of $C>0$ such that
\begin{equation*}
    \|u\|_{L^r_\eta}\le C\|\nabla u\|_p^\theta\|u\|_{q^{**}}^{1-\theta}\leq C\|u\|_{\mathcal V},\quad\forall u\in C^\infty_c(\mathbb{R}^N).
\end{equation*}
Using that $C^\infty_c(\mathbb{R}^N)$ is dense in $\mathcal V$, we conclude that \eqref{CKN cons} holds. Moreover, due to \cite[Proposition 24]{MR4659077} we obtain the compactness of the embedding in case $\eta\in(0,\overline\eta)$.
\end{proof}

\begin{remark}
    In the radial case,  we can allow $\eta=0$. Due to \cite[Proposition 26]{MR4659077} we obtain the compactness of the embedding of $\mathcal V_{rad} \hookrightarrow L^r(\R^N)$ for $r\in(p^*,q^{**})$.
\end{remark}

\begin{remark}
Since
\(
\mathcal W=D^{1,p}(\R^N)\cap D^{2,q}(\R^N)
\hookrightarrow \mathcal V
\)
continuously, Lemma~\ref{lem:weighted-embedding-E} immediately yields the same continuous and compact embeddings for \(\mathcal W\).
\end{remark}

\begin{corollary}\label{cor compact emb}
Assume \eqref{p,q,q*}.    
 For $\sigma$ and $r_\sigma$ as in \eqref{sigma_and_rsigma} we get $1\le r_\sigma\in(p_\sigma^*,q_\sigma^{**})$. 
In particular, the embedding \(\mathcal W\hookrightarrow L_\sigma^{r_\sigma}(\R^N)\) is compact. 
\end{corollary}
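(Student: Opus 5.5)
We need to show \(r_\sigma\ge 1\) and \(p_\sigma^*<r_\sigma<q_\sigma^{**}\), together with the admissibility condition \(0<\sigma<\overline\eta=2+N/q'\). Then we apply Lemma~\ref{lem:weighted-embedding-E} with \(\eta=\sigma\) and \(r=r_\sigma\), and use the continuous embedding \(\mathcal W\hookrightarrow\mathcal V\) to get compactness.

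\textbf{Admissibility of \(\sigma\).} By \eqref{sigma_and_rsigma}, \(\sigma>p>1>0\). For the upper bound, \(\sigma<2q\) and \(2q<2+N/q'\) is equivalent to \(2(q-1)<N(q-1)/q\), i.e.\ \(2q<N\), which holds. So \(\sigma\in(0,\overline\eta)\).

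\textbf{The range \(r_\sigma\ge1\).} Set \(D=2q-p>0\), which is positive since \(p<q^*<2q\). Then
\[
r_\sigma-1=\frac{pq+\sigma(q-p)-2q+p}{D}.
\]
When \(q\ge p\) the numerator is at least \(pq-2q+p=(p-1)(q+1)-1+q\cdot 0\); more simply, it is convex in nothing and we check the endpoints \(\sigma=p\) and \(\sigma=2q\), since the numerator is affine in \(\sigma\). These give \(r_p=p\) and \(r_{2q}=q\), both greater than \(1\). So \(r_\sigma\), which is a convex combination of \(p\) and \(q\), lies strictly between them and is greater than \(1\).

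\textbf{The inequalities \(p_\sigma^*<r_\sigma<q_\sigma^{**}\).} All three quantities are affine in \(\sigma\):
\[
p_\sigma^*=\frac{p(N-\sigma)}{N-p},\qquad q_\sigma^{**}=\frac{q(N-\sigma)}{N-2q},
\]
and \(r_\sigma\) interpolates between \(r_p=p\) and \(r_{2q}=q\). Therefore it suffices to compare the three functions at the endpoints \(\sigma=p\) and \(\sigma=2q\), with at least one endpoint strict, and the strict inequalities then hold on the open interval.

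At \(\sigma=p\) we have \(p_p^*=p=r_p\), and
\[
q_p^{**}=\frac{q(N-p)}{N-2q}>p \iff qN-qp>pN-2pq \iff N(q-p)+pq>0.
\]
This holds when \(q>p\). When \(q<p\), it is equivalent to \(p<q^*\), since \(q(N-p)>p(N-2q)\iff qN>p(N-q)\iff p<Nq/(N-q)\).

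At \(\sigma=2q\) we have \(q_{2q}^{**}=q=r_{2q}\), and
\[
p_{2q}^*=\frac{p(N-2q)}{N-p}<q \iff p<q^*,
\]
by the same computation.

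Hence on the open interval \((p,2q)\) the lower inequality \(r_\sigma>p_\sigma^*\) holds: the two affine functions agree at \(\sigma=p\) and \(r\) is strictly larger at \(\sigma=2q\). Likewise \(r_\sigma<q_\sigma^{**}\): they agree at \(\sigma=2q\) and \(q^{**}\) is strictly larger at \(\sigma=p\). Since we need \(p<\sigma<2q\), both strict inequalities hold.

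The only delicate point is this endpoint bookkeeping, namely that each comparison is an equality at one endpoint and strict at the other by \(p<q^*\). Once it is in place, Lemma~\ref{lem:weighted-embedding-E} gives compactness of \(\mathcal V\hookrightarrow L^{r_\sigma}_\sigma\), and composing with \(\mathcal W\hookrightarrow\mathcal V\) yields the claim.
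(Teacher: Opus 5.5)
Your proof is correct and follows the route the paper has in mind; the paper gives no written proof here, so the verification is left implicit. The steps are: check $0<\sigma<\overline\eta$, note that $r_\sigma$, $p^*_\sigma$, $q^{**}_\sigma$ are affine in $\sigma$ with $r_\sigma$ meeting $p^*_\sigma$ at $\sigma=p$ and $q^{**}_\sigma$ at $\sigma=2q$, use $p<q^*$ for strictness at the opposite endpoints, and then apply Lemma~\ref{lem:weighted-embedding-E} composed with $\mathcal W\hookrightarrow\mathcal V$.

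The only flaw is the abandoned sentence in your $r_\sigma\ge 1$ paragraph: the identity $pq-2q+p=(p-1)(q+1)-1$ is false, and that lower bound is not positive in general. Delete it, since the convex combination $r_\sigma=\frac{2q-\sigma}{2q-p}\,p+\frac{\sigma-p}{2q-p}\,q$ already gives $r_\sigma>1$.
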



\begin{figure}[!htbp]
    \centering
    \includegraphics[width=0.5\textwidth]{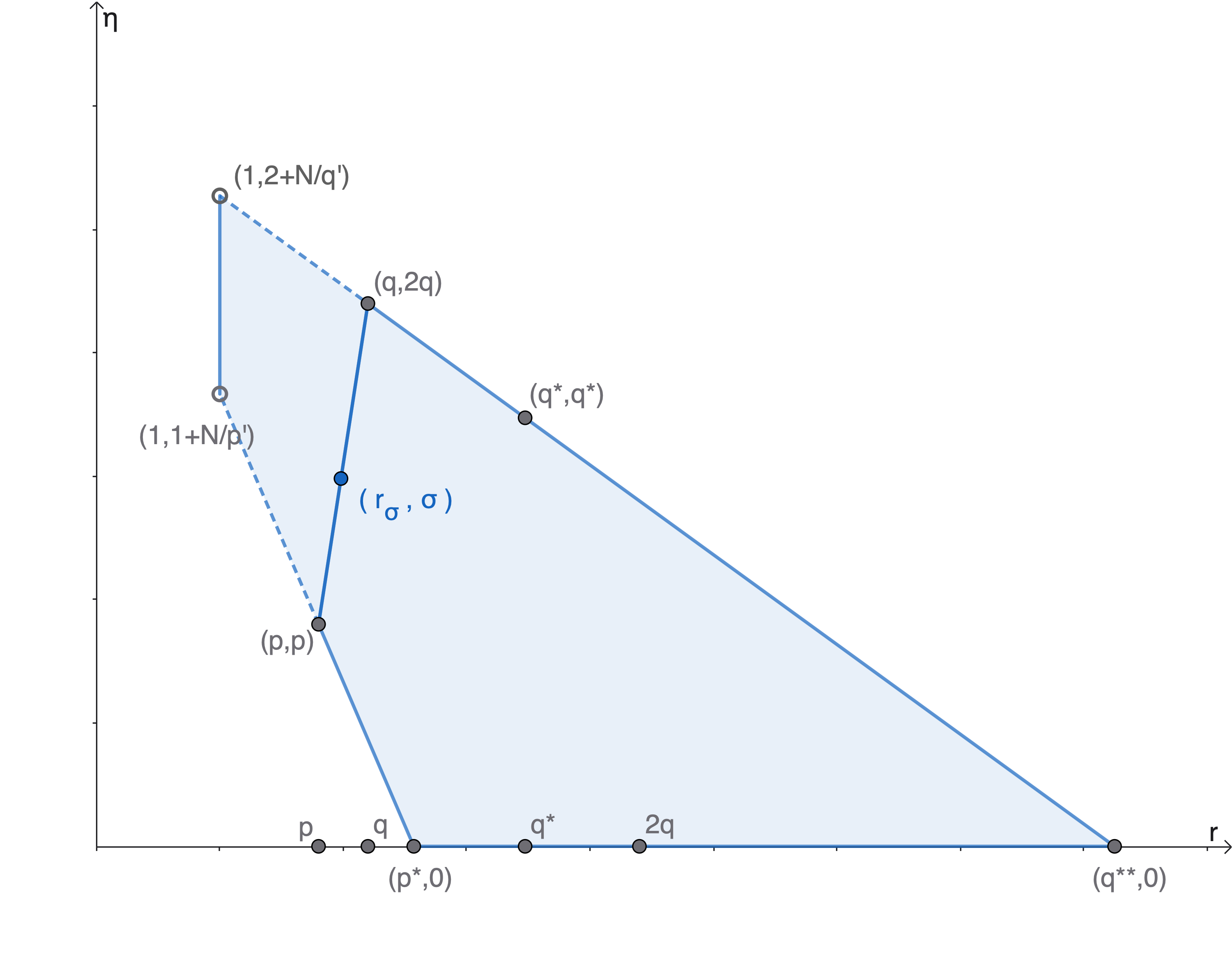}
    \caption{Embedding region in case $1<p<q<p^*$}
    \label{Embedings}
\end{figure}

\section{Scaling and scaled operators}\label{scaled theory}

This section will present a portion of a novel critical point theory for scaled operators on a Banach space. A detailed presentation of this technique can be found in \cite{MR5043800}. 

\begin{definition}\label{scaling defi}
    Let $\mathcal W$ be a reflexive Banach space. A scaling on $\mathcal W$ is a continuous mapping $\mathcal W\times[0,\infty)\to \mathcal W$, $(u,t)\mapsto u_t$ satisfying
    \begin{enumerate}[label=$(H_{\arabic*})$]
    \item\label{H1} $(u_{t_1})_{t_2}=u_{t_1t_2}$ for all $u\in \mathcal W$ and $t_1,t_2\geq0$;
    \item  $(\tau u)_t=\tau u_t$, for all $u\in \mathcal W$, $\tau\in\R$ and $t\geq0$;
    \item \label{H3} $u_0=0$ and $u_1=u$ for all $u\in \mathcal W$;
    \item \label{H4} $u_t$ is bounded on bounded sets of $\mathcal W\times[0,\infty)$;
    \item \label{H5} $\exists s>0$ such that $\|u_t\|=O(t^{s})$ as $t\to\infty$ uniformly on bounded sets.
\end{enumerate}
\end{definition}
 Denote by $\mathcal W^*$ the dual of $\mathcal W$. Recall that $h\in C(\mathcal W,\mathcal W^*)$ is a potential operator if there is a functional $H\in C^1(\mathcal W,\R)$, called a potential for $h$, such that $H^\prime=h$. By replacing $H$ with $H-H(0)$ if necessary, we may assume that $H(0)=0$.

 \begin{definition}\label{def scaled op}
     A scaled operator is an odd potential operator $h\in C(\mathcal W,\mathcal W^*)$ that maps bounded sets into bounded sets and satisfies
\[h(u_t)v_t=t^sh(u)v,\quad\forall u,v\in \mathcal W, t\geq0.
     \]
     \end{definition}
Let us denote by $I_s$ the potential of the scaled operator $A_s$ with $I_s(0)=0$. Of course $I_s$ is even, bounded on bounded sets, and satisfies the scaling property (see \cite[Proposition 2.2]{MR5043800})
\[I_s(u_t)=t^sI_s(u),\quad\forall u\in \mathcal W, t\geq0.
\]

\noindent We will consider the question of existence and multiplicity of solutions to equations as
\[
A_s(u)=\tilde{f}(u)\quad\text{in } \mathcal W^*,
\]
where $\tilde{f}\in C(\mathcal W,\mathcal W^*)$ is a potential operator, and $A_s$ is a scaled operator satisfying
\begin{enumerate}[label=$(H_{\arabic*})$, start=6]
\item \label{H6} $A_s(u)u>0$ for all $u\in \mathcal W\backslash\set{0}$;
\item \label{H7} every sequence $(u_j)$ in $\mathcal W$ such that $u_j\weak u$ and $A_s(u_j)(u_j-u)\to0$ has a subsequence that converges strongly to $u$.
\end{enumerate}
 Solutions of the above equation coincide with critical points of the $C^1$-functional
\[
\Phi(u)=I_s(u)-\tilde{F}(u),\quad u\in \mathcal W,
\]
where $\tilde{F}$ is the potential of $\tilde{f}$ with $\tilde{F}(0)=0$.

\subsection{Scaled eigenvalue problems}
Now let us consider the eigenvalue problem
 \begin{equation}\label{3-1}
     A_s(u)=\lambda B_s(u)\quad\text{in } \mathcal W^*,
 \end{equation}
where $\lambda\in\R$ and $A_s$ and $B_s$ are scaled operators satisfying $(H_6)$ and $(H_7)$ and
\begin{enumerate}[label=$(H_{\arabic*})$, start=8]
    \item \label{H8} $B_s(u)u>0$ for all $u\in \mathcal W\backslash\set{0}$;
    \item \label{H9} if $u_j\weak u$ in $\mathcal W$, then $B_s(u_j)\to B_s(u)$ in $\mathcal W^*$.
\end{enumerate}
 We say that $\lambda$ is an eigenvalue if there is a $u\in \mathcal W\backslash\set{0}$, called an eigenfunction associated with $\lambda$, satisfying equation $\eqref{3-1}$. If that is the case, then $u_t$ is also an eigenfunction associated with $\lambda$ for any $t>0$ since
\[
A_s(u_t)v=A_s(u_t)(v_{1/t})_t=t^sA_s(u)v_{1/t}=t^s\lambda B_s(u)v_{1/t}=\lambda B_s(u_t)(v_{1/t})_t=\lambda B_s(u_t)v
\]
for all $v\in \mathcal W$. We denote by $\sigma(A_s,B_s)$ the spectrum, i.e., the set of all eigenvalues, of the pair of scaled operators $(A_s,B_s)$. We have $\sigma(A_s,B_s)\subset (0,\infty)$ by $(H_6)$ and $(H_8)$.

Let us denote by $J_s$ the potential of $B_s$ with $J_s(0)=0$. We also assume that $I_s$ and $J_s$ satisfy
\begin{enumerate}[label=$(H_{\arabic*})$, start=10]
    \item\label{H10} $I_s$ is coercive, i.e., $I_s(u)\to\infty$ as $\|u\|\to\infty$;
    \item \label{H11} the equation $I_s(tu)=1$ has a unique positive solution $t$ for each $u\in \mathcal W\backslash\set{0}$;
    \item \label{H12} every solution of $\eqref{3-1}$ satisfies $I_s(u)=\lambda J_s(u).$
\end{enumerate}

The eigenvalue problem $\eqref{3-1}$ has the following variational formulation. Let
\[
\Psi(u)=\frac{1}{J_s(u)},\quad u\in \mathcal W\backslash\set{0},
\]
and consider
\[
\M=\set{u\in \mathcal W:I_s(u)=1}.
\]
 Then $\M$ is a complete, symmetric, and bounded $C^1$-Finsler manifold, and eigenvalues of problem $\eqref{3-1}$ coincide with critical values of $\w{\Psi}:=\Psi|_{\M}$. (see \cite[Proposition 2.5]{MR5043800}).

\subsubsection{Minimax eigenvalues}
We denote by $i(A)$ the $\mathbb{Z}_2$-cohomological index of a
symmetric subset $A\subset \mathcal W\setminus\{0\}$, introduced by Fadell and Rabinowitz. We
refer to~\cite{MR0478189} for its definition and basic properties.

Let $\F$ denote the class of symmetric subsets of $\M$. For $k\geq1$, let
\[
\F_k=\set{M\in\F:i(M)\geq k}
\]
and set
\[
\lambda_k:=\inf_{M\in\F_k}\sup_{u\in M}\w{\Psi}(u).
\]
We have the following theorem (see Perera et al. \cite[ Proposition 3.52 and Proposition 3.53]{MR2640827}).
\begin{theorem}\label{Theorem 301}
    Assume $(H_1)-(H_{12})$. Then $\lambda_k\nearrow \infty$ is a sequence of eigenvalues of $\eqref{3-1}$.
    \begin{enumroman}
        \item The first eigenvalue is given by
        \[
        \lambda_1=\min_{u\in\M}\w{\Psi}(u)>0.
        \]
        \item If $\lambda_k=\dotsb=\lambda_{k+m-1}=\lambda$, and $\mathcal E_\lambda$ is the set of eigenfunctions associated with $\lambda$ that lie on $\M$, then $i(\mathcal E_\lambda)\geq m$.
        \item\label{301-3} If $\lambda_k<\lambda<\lambda_{k+1}$, then
\[i(\w{\Psi}^{\lambda_k})=i(\M\backslash\w{\Psi}_\lambda)=i(\w{\Psi}^\lambda)=i(\M\backslash\w{\Psi}_{\lambda_{k+1}})=k,\]
        where $\w{\Psi}^a=\set{u\in\M:\w{\Psi}(u)\leq a}$ and $\w{\Psi}_a=\set{u\in\M:\w{\Psi}(u)\geq a}$ for $a\in\R$.
    \end{enumroman}
\end{theorem}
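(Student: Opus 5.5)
Theorem~\ref{Theorem 301} is the abstract minimax result quoted from \cite{MR2640827}. My plan is to reduce it to the standard theory of $\mathbb Z_2$-cohomological minimax values for an even $C^1$ functional on a symmetric $C^1$-Finsler manifold. The ingredients are: $\M$ is a complete, symmetric, bounded $C^1$-Finsler manifold; critical values of $\w\Psi$ coincide with eigenvalues of \eqref{3-1} \cite[Proposition 2.5]{MR5043800}; and $\w\Psi$ satisfies the Palais--Smale condition.

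\textbf{Step 1 (Palais--Smale).} Let $(u_j)\subset\M$ with $\w\Psi(u_j)\to c$ and $\w\Psi'(u_j)\to0$. Since $\M$ is bounded and $\mathcal W$ is reflexive, we may assume $u_j\weak u$. By $(H_9)$, $J_s(u_j)\to J_s(u)$, and $J_s(u_j)=1/\w\Psi(u_j)\to1/c>0$, so $u\neq0$. The Lagrange multiplier form of $\w\Psi'(u_j)\to0$, together with the normalization $(H_{12})$, gives
\[
A_s(u_j)-\mu_jB_s(u_j)\to0 \quad\text{in } \mathcal W^*
\]
with $\mu_j$ bounded. Testing with $u_j-u$ and using $(H_9)$ yields $A_s(u_j)(u_j-u)\to0$. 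Then $(H_7)$ gives strong convergence of a subsequence.

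\textbf{Step 2 (the sequence $\lambda_k$).} Because $\w\Psi$ is even and satisfies (PS), the standard deformation lemma on Finsler manifolds shows that each $\lambda_k$ is a critical value, and hence an eigenvalue. The sequence is nondecreasing because $\F_{k+1}\subset\F_k$. Since $\F_k\neq\emptyset$ (intersect $\M$ with $k$-dimensional subspaces, using $(H_{11})$ to project radially), each $\lambda_k$ is finite.

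To show $\lambda_k\to\infty$, I would argue by contradiction. If $\lambda_k\le\lambda$ for all $k$, then the critical set $K$ at levels $\le\lambda$ is compact by (PS), so it has finite index. The deformation lemma then bounds $i(\w\Psi^{\lambda+\varepsilon})\le i(N_\delta(K))<\infty$. This contradicts the fact that $i(\w\Psi^{\lambda+\varepsilon})\ge k$ for every $k$.

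\textbf{Step 3 (the three items).}
\begin{enumerate}
\item[(i)] Any antipodal pair $\{\pm u\}$ has index $1$, so $\lambda_1=\inf\w\Psi$. This infimum is attained by (PS) and Ekeland's principle. It is positive because $J_s$ is bounded on the bounded set $\M$.
\item[(ii)] This follows from the standard multiplicity argument. If $i(\mathcal E_\lambda)<m$, take a neighborhood $N$ with $i(\overline N)=i(\mathcal E_\lambda)$ and deform $\w\Psi^{\lambda+\varepsilon}\setminus N$ into $\w\Psi^{\lambda-\varepsilon}$. Subadditivity of the index then contradicts the definition of $\lambda_{k+m-1}$.
\item[(iii)] The inequality $i(\w\Psi^{\lambda_k})\ge k$ comes from the definition together with compactness. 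The upper bound $\le k$ comes from $\lambda<\lambda_{k+1}$. Deformations across the gap $(\lambda_k,\lambda_{k+1})$, which contains no critical values, give the remaining equalities for the complements. Here I would use the index identity relating sub- and superlevel sets through the deformation retract.
\end{enumerate}

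\textbf{Main obstacle.} I expect the delicate step to be the deformation lemma on the Finsler manifold $\M$. Its proof requires a pseudo-gradient field that is tangent to $\M$ and odd. That construction relies on $(H_{11})$ and the scaling, which make $\M$ a radial retract of $\mathcal W\setminus\{0\}$. Since this is done in \cite{MR5043800,MR2640827}, I will cite it.
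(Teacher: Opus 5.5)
Your overall reduction matches the paper, which does not prove this theorem. It simply quotes Propositions 3.52--3.53 of \cite{MR2640827}, once $\M$ is known to be a complete symmetric $C^1$-Finsler manifold on which $\w{\Psi}$ is even, bounded below and satisfies (PS), with the eigenvalues as its critical values \cite[Proposition 2.5]{MR5043800}. However, two of the arguments you sketch for those propositions fail as written.

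\textbf{Divergence of $\lambda_k$ (Step 2).} The bound $i(\w{\Psi}^{\lambda+\varepsilon})\le i(N_\delta(K))$, with $K$ the whole critical set below $\lambda+\varepsilon$, is false. The deformation lemma only pushes $\w{\Psi}^{c+\varepsilon}\setminus N$ below a single level $c$, and the index of a sublevel set is not controlled by the index of its critical set. For example, take $\w{\Psi}(u)=\sum_i a_iu_i^2$ on $S^{n-1}\subset\R^n$ with $a_1<\dots<a_n$. The critical set $\{\pm e_1,\dots,\pm e_n\}$ has index $1$, while $i(S^{n-1})=n$. The standard argument instead works at the single level $\lambda_\infty:=\lim_k\lambda_k$.
\begin{enumerate}
\item $K_{\lambda_\infty}$ is compact by (PS), so $m:=i(K_{\lambda_\infty})<\infty$; take a closed symmetric neighborhood $N$ with $i(N)=m$.
\item The deformation lemma gives $\varepsilon>0$ and an odd $\eta$ with $\eta(\w{\Psi}^{\lambda_\infty+\varepsilon}\setminus N)\subset\w{\Psi}^{\lambda_\infty-\varepsilon}$.
\item Pick $k$ with $\lambda_k>\lambda_\infty-\varepsilon$, and $A\in\F_{k+m}$ with $\sup_A\w{\Psi}\le\lambda_\infty+\varepsilon$.
\item Subadditivity produces a set of index $\ge k$ inside $\w{\Psi}^{\lambda_\infty-\varepsilon}$, contradicting $\lambda_k>\lambda_\infty-\varepsilon$.
\end{enumerate}
This is the same mechanism as in (ii). There, incidentally, the contradiction is with $\lambda_k=\lambda$, not with $\lambda_{k+m-1}$.

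\textbf{Item (iii).} You use that $(\lambda_k,\lambda_{k+1})$ contains no critical values, but nothing in $(H_1)$--$(H_{12})$ gives this. Minimax values need not exhaust the critical values. For instance, an even function on $S^1$ with several local extrema has $\lambda_1=\min$ and $\lambda_2=\max$, with further critical values in between. No deformation across the gap is needed. Indeed, $i(\w{\Psi}^\lambda)\le k$, since otherwise $\w{\Psi}^\lambda\in\F_{k+1}$ and $\lambda_{k+1}\le\lambda$. Also $i(\w{\Psi}^{\lambda_k})\ge k$, by continuity of the index at the closed set $\w{\Psi}^{\lambda_k}$ plus a small-displacement deformation near the compact set $K_{\lambda_k}$. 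Monotonicity along $\w{\Psi}^{\lambda_k}\subset\M\setminus\w{\Psi}_\lambda\subset\w{\Psi}^\lambda$ then gives the first three equalities. For the fourth, $\M\setminus\w{\Psi}_{\lambda_{k+1}}\supset\w{\Psi}^\lambda$ gives index $\ge k$. If its index were $\ge k+1$, this open set would contain a compact symmetric $C$ with $i(C)\ge k+1$ \cite{MR2371112}, whence $\lambda_{k+1}\le\max_C\w{\Psi}<\lambda_{k+1}$.

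\textbf{Minor point (Step 1).} The boundedness of $\mu_j$, and the normalization of the coefficient of $A_s$ to $1$, do not come from $(H_{12})$. They come from testing the multiplier relation with $u_j$, using $B_s(u_j)u_j\to B_s(u)u>0$ (by $(H_8)$, $(H_9)$) and the boundedness of $A_s(u_j)u_j$. $(H_{12})$ only identifies the critical values of $\w{\Psi}$ with eigenvalues.
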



\subsection{Minimax values based on scaling}

Let $\Phi\in C^1(\mathcal W,\R)$ be an even functional, i.e., $\Phi(-u)=\Phi(u)$ for all $u\in \mathcal W$. 
Fix $c^*>0$ and let $\Gamma$ denote the group of odd homeomorphisms of $\mathcal W$ that are the identity outside $\Phi^{-1}(0,c^*)$. Let $\A^*$ denote the class of symmetric subsets of $\mathcal W$, and for $\rho>0$ let
\begin{eqnarray*}
    \M_\rho=\set{u\in \mathcal W: I_s(u)=\rho^s}=\set{u_\rho:u\in\M}.
\end{eqnarray*}

\begin{definition}[Benci \cite{MR84c:58014}]
    The pseudo-index of $M\in\A^*$ related to $i$, $\M_\rho$, and $\Gamma$ is defined by 
    \begin{eqnarray*}
        i^*(M)=\min_{\gamma\in\Gamma} i(\gamma(M)\cap\M_\rho).
    \end{eqnarray*}
\end{definition}

The main critical point theorem is stated below. 

\begin{theorem}[{\cite[Theorem 2.33]{MR5043800}}]\label{new m1}
Let $\Phi\in C^1(\mathcal W,\R)$ be an even functional and assume that there exists $c^*>0$ such that $\Phi$ satisfies the $(PS)_c$ condition for every $c\in(0,c^*)$. Let $A_0$ and $B_0$ be symmetric subsets of $\M$ such that $A_0$ is compact, $B_0$ is closed, and
\begin{equation*}
i(A_0)\geq k+m-1,\qquad i(\M\backslash B_0)\leq k-1
\end{equation*}
for some $k,m\geq1$. Let $R>\rho>0$ and set
\[
X=\set{u_t:u\in A_0,\ 0\leq t\leq R},\qquad A=\set{u_R:u\in A_0},\qquad B=\set{u_\rho:u\in B_0}.
\]
Assume that
\begin{equation}\label{new m1-2}
\sup_{u\in A}\Phi(u)\leq0<\inf_{u\in B}\Phi(u),\qquad \sup_{u\in X}\Phi(u)<c^*.
\end{equation}
For $j=k,\ldots,k+m-1$, let
\[
\mathcal A_j^*=\set{M\in\mathcal A^*:M\text{ is compact and }i^*(M)\geq j}
\]
and set
\[
c_j^*=\inf_{M\in\mathcal A_j^*}\max_{u\in M}\Phi(u).
\]
Then
\(
0<c_k^*\leq\cdots\leq c_{k+m-1}^*<c^*,
\)
each $c_j^*$ is a critical value of $\Phi$, and $\Phi$ has $m$ distinct pairs of associated critical points.
\end{theorem}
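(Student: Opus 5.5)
The proof is a pseudo-index linking argument in the spirit of Benci. It has three steps: the upper bound $c_{k+m-1}^*<c^*$, the lower bound $c_k^*>0$, and criticality with multiplicity via the deformation lemma. Since the paper cites this result from \cite{MR5043800}, this is only the plan I would follow.

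\textbf{Step 1 (upper bound).} I show that $X\in\mathcal A^*_{k+m-1}$. The set $X$ is symmetric, since $(-u)_t=-u_t$, and compact, as the continuous image of $A_0\times[0,R]$. Fix $\gamma\in\Gamma$. Because $\Phi\le0$ on $A$ and $\Phi(0)=0$, $\gamma$ fixes both $A$ and $0$.
\begin{itemize}
\item Define $O=\{u_t\in X:\ I_s(\gamma(u_t))<\rho^s\}$. It is a relatively open symmetric subset of $X$ that contains $0$.
\item $O$ misses $A$: on $A$ we have $\gamma=\mathrm{id}$, and $I_s(u_R)=R^s>\rho^s$ because $I_s(u)=1$ on $\M$.
\item Every ray $t\mapsto u_t$ with $u\in A_0$ therefore meets the relative boundary $\partial O$. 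On $\partial O$, continuity gives $I_s(\gamma(\cdot))=\rho^s$, so $\gamma(\partial O)\subset\gamma(X)\cap\M_\rho$.
\item Map each point $u_t\in\partial O$ to $u\in A_0$. This map is odd and continuous, because $t>0$ on $\partial O$ and $t$ is recovered from $I_s(u_t)=t^s$. It is also onto $A_0$, since every ray crosses $\partial O$.
\end{itemize}
Monotonicity of the index under odd maps then gives $i(\partial O)\ge i(A_0)$. Hence
\[
i(\gamma(X)\cap\M_\rho)\ge i(\gamma(\partial O))\ge i(\partial O)\ge i(A_0)\ge k+m-1.
\]
Taking the minimum over $\gamma$ yields $i^*(X)\ge k+m-1$, so $c^*_{k+m-1}\le\max_X\Phi<c^*$. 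Monotonicity $c_k^*\le\cdots\le c^*_{k+m-1}$ is immediate from $\mathcal A^*_{j+1}\subset\mathcal A^*_j$.

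\textbf{Step 2 (lower bound).} Let $M\in\mathcal A_k^*$ and take $\gamma=\mathrm{id}$, so $i(M\cap\M_\rho)\ge k$. The map $u\mapsto u_\rho$ is an odd homeomorphism $\M\to\M_\rho$, with inverse $u\mapsto u_{1/\rho}$ by \ref{H1}. Under it, $\M_\rho\setminus B$ corresponds to $\M\setminus B_0$, so $i(\M_\rho\setminus B)\le k-1$. Consequently $M\cap\M_\rho\not\subset\M_\rho\setminus B$, that is, $M$ meets $B$. This gives $\max_M\Phi\ge\inf_B\Phi>0$, and therefore $c_k^*>0$.

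\textbf{Step 3 (criticality and multiplicity).} Suppose $c=c_j^*=\cdots=c_{j+l}^*\in(0,c^*)$.
\begin{itemize}
\item By $(PS)_c$, the critical set $K_c$ is compact and symmetric and avoids $0$. Pick a closed symmetric neighborhood $N$ of $K_c$ with $i(N)=i(K_c)$.
\item The equivariant deformation lemma gives, for small $\varepsilon>0$, an odd homeomorphism $\eta$ with $\eta(\Phi^{c+\varepsilon}\setminus N)\subset\Phi^{c-\varepsilon}$. Choosing $\varepsilon$ so that $[c-2\varepsilon,c+2\varepsilon]\subset(0,c^*)$, $\eta$ is the identity outside $\Phi^{-1}(0,c^*)$, hence $\eta\in\Gamma$.
\item Since $\Gamma$ is a group, $i^*(\eta(M))\ge i^*(M)$. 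Subadditivity gives $i^*(\overline{M\setminus N})\ge i^*(M)-i(N)$.
\item Take $M\in\mathcal A^*_{j+l}$ with $\max_M\Phi\le c+\varepsilon$. If $i(K_c)\le l$, then $\eta(\overline{M\setminus N})\in\mathcal A_j^*$ while its maximum of $\Phi$ is at most $c-\varepsilon$, contradicting the definition of $c_j^*$.
\end{itemize}
Hence $i(K_c)\ge l+1$, so $K_c$ contains at least $l+1$ distinct antipodal pairs. Summing over the distinct levels gives $m$ distinct pairs of critical points.

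I expect Step 1 to be the main obstacle, namely the claim $i(\partial O)\ge i(A_0)$. The relative boundary of $O$ in the scaled cone must be handled carefully, because $O$ need not be star-shaped along rays. It is essential to use the surjective odd projection $\partial O\to A_0$ rather than a radial retraction, and the continuity of $t\mapsto u_t$ together with $I_s(u_t)=t^s$ is what makes this projection well defined.
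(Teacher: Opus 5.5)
The paper does not prove this theorem; it quotes it from \cite{MR5043800}. Your architecture is the standard Benci pseudo-index argument: show $X\in\mathcal A^*_{k+m-1}$, force every $M\in\mathcal A^*_k$ to meet $B$, then use an equivariant deformation for criticality and multiplicity. Steps 2 and 3 are fine. Step 1, however, has a genuine gap, exactly where you expected trouble.

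\textbf{The gap.} You have an odd continuous map $\pi:\partial O\to A_0$. Monotonicity applied to it gives $i(\partial O)\le i(A_0)$, which is the \emph{opposite} of what you need, and surjectivity of $\pi$ does not reverse it. Your own geometry already shows this.
\begin{enumerate}
\item Take $\mathcal W=\mathbb R^2$ with $u_t=tu$ and $A_0=S^1$.
\item Let $Y$ be the arc $\{r(\theta)e^{i\theta}:0\le\theta\le\pi\}$ together with its antipodal image, where $r$ increases from $R/3$ to $2R/3$.
\item The two arcs are disjoint, $Y$ meets every ray of $X$, and radial projection $Y\to S^1$ is odd, continuous and onto.
\item Yet $i(Y)=1<2=i(S^1)$.
\end{enumerate}
So ``every ray crosses the set'' together with ``odd surjective projection'' cannot bound the index from below. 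What your argument never uses is that $\partial O$ is the boundary of a neighbourhood of the vertex that misses $A$. In other words, $\partial O$ \emph{separates} $0$ from $A$ inside $X$. This separation is the real input, and it is captured by the piercing property of the cohomological index, not by monotonicity.

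\textbf{A repair.} Fix $\gamma\in\Gamma$.
\begin{enumerate}
\item Compactness of $A_0$, continuity of the scaling at $t=0$, and $\gamma(0)=0$ give some $\delta\in(0,\rho)$ with $I_s(\gamma(u_\delta))<\rho^s$ for all $u\in A_0$.
\item Set $\varphi(u,t)=\gamma\bigl(u_{(1-t)\delta+tR}\bigr)$ on $A_0\times[0,1]$. This map is odd in $u$. Its image is compact and avoids $0$, because $u_t\neq0$ for $t>0$ and $\gamma$ is injective.
\item Let $C_0:=\{v\neq0:I_s(v)\le\rho^s\}$ and $C_1:=\{v:I_s(v)\ge\rho^s\}$. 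Then $\varphi(A_0\times\{0\})\subset C_0$. Also $\varphi(A_0\times\{1\})=A\subset C_1$, because $\gamma$ fixes $A$.
\item The piercing property reads as follows. Suppose $C_0,C_1$ are closed symmetric subsets of $\mathcal W\setminus\{0\}$, and $\varphi:C\times[0,1]\to C_0\cup C_1$ is odd in the first variable, has closed image, sends $C\times\{0\}$ into $C_0$ and $C\times\{1\}$ into $C_1$. Then $i\bigl(\varphi(C\times[0,1])\cap C_0\cap C_1\bigr)\ge i(C)$.
\end{enumerate}
Applied here, it gives
\[
i\bigl(\gamma(X)\cap\M_\rho\bigr)\ge i\bigl(\varphi(A_0\times[0,1])\cap C_0\cap C_1\bigr)\ge i(A_0)\ge k+m-1.
\]

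Alternatively, you can keep your set $O$. Apply piercing to $(u,t)\mapsto u_{(1-t)\delta+tR}$ with the closed sets $\{v\in X\setminus\{0\}:I_s(\gamma(v))\le\rho^s\}$ and $\{v\in X:I_s(\gamma(v))\ge\rho^s\}$. Only afterwards push forward by $\gamma$; at that point monotonicity runs in the correct direction.
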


For $k=1$ this can be seen as a Mountain-Pass type result.

\subsection{Scaling for the q-Biharmonic and p-Laplacian type problem}

We consider the space $\mathcal W=D^{1,p}(\R^N)\cap\,D^{2,q}(\R^N) $, which is a reflexive Banach space with the norm
\[
\|u\|=\|\Delta u\|_q+\|\nabla u\|_p.
\]
Now we are going to present a scaling on $\mathcal W$, according to Definition \ref{scaling defi}. For $p\neq q,q^*$ we denote
\[
\gamma=\frac{2q-p}{p-q}
\]
and define the map $(u,t)\in \mathcal W\times[0,\infty)\mapsto u_t\in \mathcal W$ by $u_0\equiv0$ and for $t>0$
\begin{equation}\label{ut definition}
    \begin{aligned}
    &u_t(x)=t^\gamma u(tx)\quad\text{if}\,\, p\in (q,q^*)\\
     & u_t(x)=t^{-\gamma} u(x/t)\quad\text{if}\,\, p\in (1,q).
\end{aligned}
\end{equation}

\begin{lemma} 
Assume $p$ and $q$ as in \eqref{p,q,q*}. Then, the operator $ \mathcal W\times[0,\infty)\to \mathcal W$, $(u,t)\mapsto u_t$, defined in \eqref{ut definition} is a \textit{scaling} on $\mathcal W$. 
\end{lemma}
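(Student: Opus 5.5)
The plan is to check \ref{H1}--\ref{H5} directly. Everything reduces to one exact computation of how the two seminorms in $\|\cdot\|$ transform under \eqref{ut definition}. The choice of $\gamma$ is made precisely so that both terms scale with a common power $t^s$ (at the level of $\|\nabla u\|_p^p$ and $\|\Delta u\|_q^q$).

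\emph{Step 1: scaling of the norms.} Consider first $p\in(q,q^*)$ and $u_t(x)=t^\gamma u(tx)$. Then $\nabla u_t(x)=t^{\gamma+1}(\nabla u)(tx)$ and $\Delta u_t(x)=t^{\gamma+2}(\Delta u)(tx)$. A change of variables gives
\[
\|\nabla u_t\|_p^p=t^{(\gamma+1)p-N}\|\nabla u\|_p^p,\qquad \|\Delta u_t\|_q^q=t^{(\gamma+2)q-N}\|\Delta u\|_q^q .
\]
With $\gamma=(2q-p)/(p-q)$ one has $(\gamma+1)p=(\gamma+2)q=pq/(p-q)$. Both exponents therefore equal
\[
s:=\frac{pq}{p-q}-N,
\]
and $s>0$ exactly because $p(N-q)<Nq$, i.e.\ $p<q^*$.

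Now let $p\in(1,q)$ and $u_t(x)=t^{-\gamma}u(x/t)$. The same computation gives the common exponent
\[
s=N+\frac{pq}{q-p}>0 .
\]
In both cases $u_t\in\mathcal W$ whenever $u\in\mathcal W$, and
\[
\|u_t\|=t^{s/p}\|\nabla u\|_p+t^{s/q}\|\Delta u\|_q .
\]

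\emph{Step 2: the algebraic axioms.} Properties \ref{H1}--\ref{H3} follow from the formulas. For instance, for $p>q$,
\[
(u_{t_1})_{t_2}(x)=t_2^\gamma t_1^\gamma u(t_1t_2x)=u_{t_1t_2}(x).
\]
Linearity in $\tau$ is obvious, $u_1=u$, and $u_0=0$ holds by definition. Properties \ref{H4} and \ref{H5} follow from the identity for $\|u_t\|$ at the end of Step 1. On a set where $\|u\|\le M$ and $t\le T$ we get $\|u_t\|\le (T^{s/p}+T^{s/q})M$. Moreover, $\|u_t\|=O(t^{\max\{s/p,s/q\}})$ as $t\to\infty$, uniformly on bounded sets. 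This gives \ref{H5}, with exponent $\max\{s/p,s/q\}$ in place of the $s$ there.

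\emph{Step 3: joint continuity of $(u,t)\mapsto u_t$.} This is the only mildly delicate point.

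At $t=0$: if $u_j\to u$ and $t_j\to0$, then $\|(u_j)_{t_j}\|\le (t_j^{s/p}+t_j^{s/q})\|u_j\|\to0$, since $s>0$.

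At $t_0>0$: write
\[
(u_j)_{t_j}-u_{t_0}=\big((u_j)-u\big)_{t_j}+\big(u_{t_j}-u_{t_0}\big).
\]
The first term is bounded by $(t_j^{s/p}+t_j^{s/q})\|u_j-u\|\to0$. For the second term, use density of $C_c^\infty(\R^N)$ in $\mathcal W$. Pick $\varphi\in C_c^\infty$ with $\|u-\varphi\|<\varepsilon$. Then $\|u_{t_j}-\varphi_{t_j}\|$ and $\|u_{t_0}-\varphi_{t_0}\|$ are at most $C\varepsilon$, uniformly for $t_j$ near $t_0$, by Step 1. Finally, $\varphi_{t_j}\to\varphi_{t_0}$ in $\mathcal W$: the functions $\nabla\varphi_{t}$ and $\Delta\varphi_t$ converge uniformly and have supports in a fixed compact set for $t$ near $t_0$. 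Note that this step only needs the exact norm identities from Step 1 plus the uniform bound on compact $t$-intervals.

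Hence $(u,t)\mapsto u_t$ is a scaling on $\mathcal W$.
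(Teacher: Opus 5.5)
Your proof is correct and follows the paper's route: the exact identities $\|\nabla u_t\|_p^p=t^s\|\nabla u\|_p^p$ and $\|\Delta u_t\|_q^q=t^s\|\Delta u\|_q^q$ give $(H_4)$, $(H_5)$ and continuity at $t=0$, and density of $C_c^\infty(\mathbb R^N)$ gives continuity at $t_0>0$ (you write this step out, while the paper cites its reference). One small remark: since $p,q>1$ you have $\max\{s/p,s/q\}<s$, so your bound already gives $\|u_t\|=o(t^s)$. That is $(H_5)$ with the same $s$ as the homogeneity degree of $A_s$ and $B_s$, which is how the paper states it.
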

\begin{proof}
It is clear that \ref{H1}-\ref{H3} are satisfied.   Now, for any $t>0$ we see that
\[
\int_{\R^N}|\Delta [t^\gamma u(tx)]|^qdx=t^{q(\gamma+2)-N}\int_{\R^N}|\Delta u|^qdx,
\]
and
\[
\int_{\R^N}|\nabla [t^\gamma u(tx)]|^pdx=t^{p(\gamma+1)-N}\int_{\R^N}|\nabla u|^pdx.
\]
By the choice of $\gamma$, it occurs
\[
q(\gamma+2)-N=p(\gamma+1)-N=\dfrac{pq}{p-q}-N=:s_0.
\]
Notice that $s_0>0$ for $p\in (q,q^*)$ and $s_0<0$ for $p\in (1,q)$.
Denoting $s=|s_0|$, we get
\[
\|\Delta u_t\|_q^q=t^s\|\Delta u\|_q^q\quad\text{and}\quad \|\nabla u_t\|_p^p= t^s\|\nabla u\|_p^p,
\]
and so
\begin{equation}\label{ut bounded}
\|u_t\|\leq\max\{t^{s/q},t^{s/p}\}\| u\|,\quad\forall t>0, u\in \mathcal W.
\end{equation}
This shows that $(u,t)\mapsto u_t$ is continuous in $(u,0)$. If  $t>0$, using the density of $C^\infty_c(\mathbb{R}^N)$ in $\mathcal W$ and arguing as in \cite[Lemma 3.1]{MR5043800} we verify the continuity in $(u,t)$. From \eqref{ut bounded} we also see that $u_t$ is bounded on bounded sets of $\mathcal W\times[0,\infty)$ and \ref{H4} holds. Moreover, since $p,q>1$, we have $\|u_t\|=o(t^s)$ as $t\to\infty$, uniformly on bounded sets of $\mathcal W$. In particular, 
\ref{H5} also holds.
\end{proof} 

Now that we have a scaling on $\mathcal W$, let us define the potential operators associated to the problems we are studying here. 
Consider $A_s:\mathcal W\to \mathcal W^*$  defined by
\begin{equation}\label{As def}
A_s(u)v=\int_{\R^N}|\Delta u|^{q-2}\Delta u \Delta vdx+\int_{\R^N}|\nabla u|^{p-2}\nabla u \nabla vdx
\end{equation}
and, for a fixed $p<\sigma<2q$ and $r_\sigma$ as in \eqref{sigma_and_rsigma}, $B_s:\mathcal W\to \mathcal W^*$ is given by
\begin{equation}\label{Bs def}
B_s(u)v=\int_{\R^N}\dfrac{|u|^{r_\sigma-2}uv}{|x|^\sigma}dx.
\end{equation}
$A_s$ and $B_s$ are odd potential operators, map bounded sets into bounded sets and, by the definition of $u_t$ in \eqref{ut definition}, they verify 
\[
A_s(u_t)v_t=t^sA_s(u)v\quad\text{and}\quad B_s(u_t)v_t=t^sB_s(u)v\quad\forall  u,v\in \mathcal W,t\ge0.
\]
This means that $A_s$ and $B_s$ are scaled operators, as in Definition \ref{def scaled op}. The associated potentials $I_s,J_s:\mathcal W\to\R$, such that $I_s(0)=0$ and $J_s(0)=0$, are given by
\begin{equation}\label {Is Js def}
    I_s(u)=\int_{\R^N}\seq{\frac{1}{q}|\Delta u|^q+\frac{1}{p}|\nabla u|^p}dx
    \quad\text{and}\quad J_s(u)=\frac{1}{r_\sigma}\int_{\R^N}\dfrac{|u|^{r_\sigma}}{|x|^\sigma}dx
\end{equation}
and satisfy
\[
I_s(u_t)=t^s I_s(u)\quad\text{and}\quad J_s(u_t)=t^s J_s(u),\quad\forall t\ge0.
\]
\begin{lemma}\label{S+ condition}
Every sequence $(u_j)$ in $\mathcal W$ such that $u_j\weak u$ and $\limsup_{j\to\infty}A_s(u_j)(u_j-u)\le0$ has a subsequence that converges strongly to $u$.
\end{lemma}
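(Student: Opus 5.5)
The plan is the standard $(S_+)$ argument, carried out separately for the two pieces of $A_s$ and combined through monotonicity.

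Write $A_s=A_1+A_2$, where
\[
A_1(u)v=\int_{\R^N}|\Delta u|^{q-2}\Delta u\,\Delta v\,dx,\qquad A_2(u)v=\int_{\R^N}|\nabla u|^{p-2}\nabla u\cdot\nabla v\,dx.
\]
Since $u_j\weak u$ in $\mathcal W$, the maps $v\mapsto\int|\Delta u|^{q-2}\Delta u\,\Delta v$ and $v\mapsto\int|\nabla u|^{p-2}\nabla u\cdot\nabla v$ are bounded linear functionals on $\mathcal W$, so $A_1(u)(u_j-u)\to0$ and $A_2(u)(u_j-u)\to0$. Hence
\[
\limsup_{j\to\infty}\Big[(A_1(u_j)-A_1(u))(u_j-u)+(A_2(u_j)-A_2(u))(u_j-u)\Big]\le0 .
\]

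Each bracket is nonnegative. This follows from the monotonicity of $\xi\mapsto|\xi|^{r-2}\xi$ on $\R$ and on $\R^N$, or directly from H\"older's inequality:
\[
(A_1(u_j)-A_1(u))(u_j-u)\ge\big(\|\Delta u_j\|_q^{q-1}-\|\Delta u\|_q^{q-1}\big)\big(\|\Delta u_j\|_q-\|\Delta u\|_q\big)\ge0,
\]
and the analogous estimate holds for $A_2$. Therefore both brackets tend to $0$ separately. Passing to a subsequence, the same H\"older estimate gives $\|\Delta u_j\|_q\to\|\Delta u\|_q$ and $\|\nabla u_j\|_p\to\|\nabla u\|_p$.

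Next I transfer weak convergence to the components. Since $u_j\weak u$ in $\mathcal W$ and $w\mapsto\Delta w$, $w\mapsto\nabla w$ are bounded linear maps into $L^q(\R^N)$ and $L^p(\R^N;\R^N)$, we get $\Delta u_j\weak\Delta u$ in $L^q$ and $\nabla u_j\weak\nabla u$ in $L^p$. These spaces are uniformly convex for $1<p,q<\infty$, so weak convergence together with convergence of norms (Radon--Riesz) yields $\Delta u_j\to\Delta u$ in $L^q$ and $\nabla u_j\to\nabla u$ in $L^p$. Consequently $\|u_j-u\|=\|\Delta(u_j-u)\|_q+\|\nabla(u_j-u)\|_p\to0$.

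The only point needing care is splitting the combined $\limsup$ into the two separate limits. This works because each term is nonnegative, which the H\"older argument provides without any case distinction between $p,q<2$ and $p,q\ge2$. Alternatively one can use the Simon-type inequalities
\[
(|a|^{r-2}a-|b|^{r-2}b)\cdot(a-b)\ge c|a-b|^r\ \ (r\ge2),
\]
together with the corresponding H\"older variant for $r<2$. Neither route needs the equality of norms $\|u\|=\|\Delta u\|_q+\|\nabla u\|_p$ beyond its definition, so no further obstacle arises.
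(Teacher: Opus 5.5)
Your proof is correct, but it reaches strong convergence by a different mechanism than the paper.

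Both proofs start with the same reduction. From $\langle A_s(u_j)-A_s(u),u_j-u\rangle\to0$ and the nonnegativity of each of the two pieces, each piece tends to $0$.

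\emph{The paper's route.} It uses the pointwise vector inequality \eqref{basic ineq} twice: once for nonnegativity, and once to pass directly to $\|\Delta(u_j-u)\|_q\to0$ and $\|\nabla(u_j-u)\|_p\to0$. For exponents in $(1,2]$, this last step tacitly needs an extra H\"older argument against the bounded quantity $\int(|z_j|+|\xi|)^r$.

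\emph{Your route.} You get nonnegativity from the global H\"older bound $(\|\Delta u_j\|_q^{q-1}-\|\Delta u\|_q^{q-1})(\|\Delta u_j\|_q-\|\Delta u\|_q)\ge0$, and its analogue for the gradient term. This yields convergence of the norms. You then conclude by the Radon--Riesz (Kadec--Klee) property of the uniformly convex spaces $L^q(\R^N)$ and $L^p(\R^N;\R^N)$, using that $\Delta$ and $\nabla$ are weak-to-weak continuous.

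\emph{What each buys.} Your argument treats all $p,q$ uniformly, with no case split at exponent $2$ and no pointwise inequalities. The price is invoking uniform convexity, including for the vector-valued space with the Euclidean norm on $\R^N$, and giving up the quantitative, pointwise character of \eqref{basic ineq}. The paper reuses that pointwise character in Lemma \ref{mbs1}: there, pointwise nonnegativity of the integrand is what lets one restrict to a compact set $K$, which your global H\"older bound does not provide.

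A minor point: no subsequence is needed in your norm step. The bounded sequence $\|\Delta u_j\|_q$ can only accumulate at $\|\Delta u\|_q$, so the whole sequence converges.
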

\begin{proof}
Since $A_s(u)\in \mathcal W^*$ and $u_j\weak u$ in $\mathcal W$, we have $A_s(u)u_j\to A_s(u)u$. Moreover, using that
\begin{align}\label{basic ineq}
 (|z|^{r-2}z-|\xi|^{r-2}\xi)(z-\xi)\ge
\begin{cases}
 C_r|z-\xi|^r&\text{if}\quad r\ge2,\\
  \frac{|z-\xi|^2}{(|z|+|\xi|)^{2-r}}&\text{if}\quad r\in(1,2],
\end{cases}
\end{align}
for all $z,\xi\in\R^m$, with $m=1$ and $m=N$, we get
\[
0\le \limsup_{j\to\infty}\langle A_s(u_j)- A_s(u),u_j-u\rangle\leq0.
\]
This implies that $\langle A_s(u_j)- A_s(u),u_j-u\rangle\to0$,
\[
\int_{\R^N}\left[|\Delta u_j|^{q-2}\Delta u_j -|\Delta u|^{q-2}\Delta u\right]\Delta (u_j-u)dx\to 0
\]
and 
\[
\int_{\R^N}\left[|\nabla u_j|^{p-2}\nabla u_j-|\nabla u|^{p-2}\nabla u\right] \nabla (u_j-u)dx\to0.
\]
Then, from \eqref{basic ineq} we  get $u_j\to u$ in $\mathcal W$. 
\end{proof}
\begin{lemma}
  Assume \eqref{p,q,q*}. Then, the scaled operators $A_s$ and $B_s$, defined in \eqref{As def} and  \eqref{Bs def}, satisfy \ref{H6}-\ref{H9} and their potentials $I_s$ and $J_s$, given in \eqref{Is Js def}, satisfy \ref{H10}-\ref{H12}.
\end{lemma}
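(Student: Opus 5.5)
We verify the hypotheses one at a time, using the explicit form of $A_s$, $B_s$, $I_s$, $J_s$ and the embeddings of Section~\ref{sectionsobolev}.

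For \ref{H6}, note that
\[
A_s(u)u=\|\Delta u\|_q^q+\|\nabla u\|_p^p,
\]
which is positive for $u\ne0$ in $\mathcal W$. Indeed, both terms vanish only if $\nabla u=0$, and then $u\in L^{p^*}$ forces $u=0$. Hypothesis \ref{H7} is exactly Lemma~\ref{S+ condition}, since $A_s(u_j)(u_j-u)\to0$ in particular gives $\limsup\le0$. For \ref{H8}, we have
\[
B_s(u)u=\int_{\R^N}|u|^{r_\sigma}|x|^{-\sigma}\,dx,
\]
which is positive for $u\neq0$.

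For \ref{H9}, we use Corollary~\ref{cor compact emb}: $u_j\weak u$ in $\mathcal W$ implies $u_j\to u$ in $L^{r_\sigma}_\sigma(\R^N)$. The Nemytskii map $u\mapsto|u|^{r_\sigma-2}u$ is continuous from $L^{r_\sigma}_\sigma$ into $L^{r_\sigma'}_\sigma$, where the weighted measure is $|x|^{-\sigma}dx$; this follows from the standard Krasnosel'skii argument (subsequences with a.e.\ convergence and a dominating function, plus Vitali). Hölder's inequality with respect to $|x|^{-\sigma}dx$ then gives
\[
\|B_s(u_j)-B_s(u)\|_{\mathcal W^*}\le \big\||u_j|^{r_\sigma-2}u_j-|u|^{r_\sigma-2}u\big\|_{L^{r_\sigma'}_\sigma}\,\sup_{\|v\|\le1}\|v\|_{L^{r_\sigma}_\sigma}\to0,
\]
where the supremum is finite by the continuous embedding. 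Note that $r_\sigma>1$ is needed here; this holds by the corollary.

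For \ref{H10}, if $\|u\|\to\infty$ then one of $\|\Delta u\|_q$, $\|\nabla u\|_p$ tends to infinity, so
\[
I_s(u)\ge \tfrac1q\|\Delta u\|_q^q+\tfrac1p\|\nabla u\|_p^p\to\infty.
\]
For \ref{H11}, the map $t\mapsto I_s(tu)=\frac{t^q}{q}a+\frac{t^p}{p}b$ with $a=\|\Delta u\|_q^q$, $b=\|\nabla u\|_p^p$ and $a+b>0$ is continuous and strictly increasing on $[0,\infty)$, vanishes at $0$, and tends to $\infty$. Hence it takes the value $1$ exactly once.

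The only genuinely delicate point is \ref{H12}. A solution of \eqref{3-1} gives, upon testing with $u$,
\[
\|\Delta u\|_q^q+\|\nabla u\|_p^p=\lambda r_\sigma J_s(u),
\]
but $I_s$ weights the two terms by $1/q$ and $1/p$ separately, so a second relation is needed. I will differentiate along the scaling. Since $u$ is a critical point of $I_s-\lambda J_s$ and $t\mapsto u_t$ is differentiable at $t=1$ with derivative $\frac{d}{dt}u_t|_{t=1}=\pm(\gamma u+x\cdot\nabla u)$, one obtains
\[
0=\frac{d}{dt}\big(I_s(u_t)-\lambda J_s(u_t)\big)\Big|_{t=1}=s\big(I_s(u)-\lambda J_s(u)\big).
\]
This is precisely the route of \cite{MR5043800}: $I_s(u_t)-\lambda J_s(u_t)=t^s\big(I_s(u)-\lambda J_s(u)\big)$ holds for all $t$, so the derivative at $t=1$ equals $s(I_s(u)-\lambda J_s(u))$. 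The main obstacle is to justify that this derivative also equals $\big(A_s(u)-\lambda B_s(u)\big)\big(\tfrac{d}{dt}u_t|_{t=1}\big)$. That requires $x\cdot\nabla u$ to be an admissible test direction, i.e.\ differentiability of $t\mapsto u_t$ in $\mathcal W$, which fails in general. I would first try to avoid it by a Pohozaev-type truncation argument: test the equation with $\varphi_R\,(x\cdot\nabla u)$, where $\varphi_R$ is a cutoff supported on $B_{2R}$, justify the integrations by parts using local regularity (Theorem~\ref{thm:intro-regularity} applied with $f=\lambda|u|^{r_\sigma-2}u|x|^{-\sigma}$ away from the origin, handling the origin by a small-ball excision), and let $R\to\infty$ and the excision radius go to $0$ using $u\in\mathcal W\cap L^{r_\sigma}_\sigma$. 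This yields the identity $s\big(I_s(u)-\lambda J_s(u)\big)=0$, hence $I_s(u)=\lambda J_s(u)$.
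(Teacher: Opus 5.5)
Your proposal is correct and follows essentially the paper's proof. Hypotheses (H6)--(H11) come from the definitions, Lemma~\ref{S+ condition} and the compact embedding of Corollary~\ref{cor compact emb}, while (H12) comes from the Nehari identity plus the Pohozaev identity obtained by testing with $x\cdot\nabla u$ cut off near $0$ and near $\infty$, justified by Theorem~\ref{thm:intro-regularity} on $\R^N\setminus\{0\}$ as in Proposition~\ref{prop:pohozaev-identity}. The only difference is cosmetic: you combine the two identities with the weights of the scaling generator $\gamma u+x\cdot\nabla u$, which gives $I_s(u)=\lambda J_s(u)$ directly and explains the proportionality of the two coefficient vectors that the paper verifies with a determinant computation.
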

\begin{proof}
Conditions \ref{H6} and \ref{H8} follow from the definition of $A_s$ and $B_s$. Lemma \ref{S+ condition} gives \ref{H7} and Corollary \ref{cor compact emb} yields \ref{H9}. We also verify that there is a positive constant $c=c({p,q})$ such that 
$$
I_s(u)\geq c\|u\|^{\min\{p,q\}},\quad \forall u\in \mathcal W\,\,\text{with}\,\,\,I_s(u)\ge1,
$$
and so condition \ref{H10} is satisfied. Now, for any $u\neq0$, the function $t\mapsto I_s(tu)$ is increasing on $[0,\infty)$, and so \ref{H11} holds.
Let us  prove \ref{H12}. If $u\in \mathcal W$ is an eigenfunction associated with $\lambda$, using $u$ as a test function in 
 \eqref{intro:eigenvalue-problem} gives
\begin{equation}\label{nehari-H12}
\int_{\R^N}|\Delta u|^q\,dx+\int_{\R^N}|\nabla u|^p\,dx=\lambda \int_{\R^N}\frac{|u|^{r_\sigma}}{|x|^\sigma}\,dx.
\end{equation}
Now, we notice that since $u$ is a weak solution of \eqref{intro:eigenvalue-problem}, taking $\Omega=\mathbb R^N\setminus\{0\}$ and $f(x,u)=\lambda {|u|^{r_\sigma-2}u}/{|x|^\sigma}$, we have that $u$ has the regularity given in \eqref{all estimate}. Therefore, $u$ satisfies the Pohozaev identity (Proposition \ref{prop:pohozaev-identity} with $\tau_1,\tau_2=0$), which yields
\begin{equation}\label{pohozaev-H12-proof}
\frac{N-2q}{q}\int_{\R^N}|\Delta u|^q\,dx
+
\frac{N-p}{p}\int_{\R^N}|\nabla u|^p\,dx
=
\lambda\frac{N-\sigma}{r_\sigma}\int_{\R^N}\frac{|u|^{r_\sigma}}{|x|^\sigma}\,dx.
\end{equation}
Using \eqref{nehari-H12} to eliminate $\lambda \int_{\R^N}\frac{|u|^{r_\sigma}}{|x|^\sigma}\,dx$ from
\eqref{pohozaev-H12-proof}, we obtain
\begin{equation}\label{linear-relation-H12}
\left(
\frac{N-2q}{q}-\frac{N-\sigma}{r_\sigma}
\right)\int_{\R^N}|\Delta u|^q\,dx
+
\left(
\frac{N-p}{p}-\frac{N-\sigma}{r_\sigma}
\right)\int_{\R^N}|\nabla u|^p\,dx
=0.
\end{equation}
Since \(r_\sigma(2q-p)=pq+\sigma(q-p)\), we verify that  the matrix with the two vectors
\[
\left(
\frac1q-\frac1{r_\sigma},\,
\frac1p-\frac1{r_\sigma}
\right)\quad\text{and}\quad
\left(
\frac{N-2q}{q}-\frac{N-\sigma}{r_\sigma},\,
\frac{N-p}{p}-\frac{N-\sigma}{r_\sigma}
\right),
\]
has trivial determinant. Moreover, recalling that $p\neq q^*$, we see that the second vector is nonzero.  
It follows from \eqref{linear-relation-H12} and the proportionality of the
two vectors that
\[
\left(\frac1q-\frac1{r_\sigma}\right)\int_{\R^N}|\Delta u|^q\,dx
+
\left(\frac1p-\frac1{r_\sigma}\right)\int_{\R^N}|\nabla u|^p\,dx
=0.
\]
Using \eqref{nehari-H12}, we conclude that
\begin{align*}
\frac1q \int_{\R^N}|\Delta u|^q\,dx+\frac1p \int_{\R^N}|\nabla u|^p\,dx-\frac{\lambda}{r_\sigma}
\int_{\R^N}\frac{|u|^{r_\sigma}}{|x|^\sigma}\,dx=0.
\end{align*}
Therefore,
$I_s(u)=\lambda J_s(u)$,
which is precisely condition \ref{H12}.
\end{proof}

\section{Concentration--compactness tools}\label{section-conccomp}

In this section, we establish the concentration--compactness results that will
be used to prove some pointwise convergence of $(PS)$ sequences. The arguments are formulated so as to cover simultaneously the two regimes $q<p^*$ and $q>p^*.$ In particular, the results below apply both to the full double-critical problem and to the reduced problem obtained by removing the $p^*$-critical term.

We denote by
\begin{equation}\label{S1pS2q}
S_{1,p}
:=
\inf_{u\in D^{1,p}(\mathbb R^N)\backslash\{0\}}
\frac{\displaystyle\int_{\mathbb R^N}|\nabla u|^p\,dx}
{\displaystyle
\left(
\int_{\mathbb R^N}|u|^{p^*}\,dx
\right)^{p/p^*}},\qquad S_{2,q}
:=
\inf_{u\in D^{2,q}(\mathbb R^N)\backslash\{0\}}
\frac{\displaystyle\int_{\mathbb R^N}|\Delta u|^q\,dx}
{\displaystyle
\left(
\int_{\mathbb R^N}|u|^{q^{**}}\,dx
\right)^{q/q^{**}}}
\end{equation}
the optimal constants associated with the  Sobolev
embeddings $ D^{1,p}(\mathbb R^N)
\hookrightarrow
L^{p^*}(\mathbb R^N)$ and  $ D^{2,q}(\mathbb R^N)
\hookrightarrow
L^{q^{**}}(\mathbb R^N),$ respectively.

\medskip

Next, we have the concentration--compactness lemma obtained by Lions \cite[Lemma I.1]{MR834360}. 

\begin{lemma}\label{lem:Lions} Let $\{u_j\}\subset D^{2,q}(\mathbb R^N)$ be a bounded
sequence such that $u_j\rightharpoonup u$
 in $ D^{2,q}(\mathbb R^N).$
Suppose that, up to a subsequence, \[|\Delta u_j|^q\,dx
\stackrel{*}{\rightharpoonup}\mu_q
 \qquad\text{and}\qquad |u_j|^{q^{**}}\,dx \stackrel{*}{\rightharpoonup}\nu_q\] in the sense of bounded nonnegative measures on $\mathbb R^N$.
Then there exist an at most countable index set $\J_q$, a family of
distinct points $\{x_{q,k}\}_{k\in \J_q}\subset\mathbb R^N$ and numbers
$\{\nu_{q,k}\}_{k\in \J_q}\subset(0,\infty)$ such that
\[
\nu_q
=
|u|^{q^{**}}\,dx
+
\sum_{k\in \J_q}\nu_{q,k}\delta_{x_{q,k}}.
\]
Moreover, there exist  $\mu_{q,k}>0$, such that
\[
\mu_q
\ge
|\Delta u|^q\,dx
+
\sum_{k\in \J_q}\mu_{q,k}\delta_{x_{q,k}},
\]
and
\begin{equation*}
(\nu_{q,k})^{q/q^{**}}
\le
S_{2,q}^{-1}\mu_{q,k}
\qquad\text{for every }k\in \J_q.
\end{equation*}
In particular,
\[
\sum_{k\in \J_q}(\nu_{q,k})^{q/q^{**}}<\infty.
\]
\end{lemma}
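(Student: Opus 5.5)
The statement is Lions' concentration--compactness lemma, adapted to $D^{2,q}(\mathbb R^N)$ with the quantity $\|\Delta u\|_q$. The plan follows Lions' original argument: a reverse Hölder inequality between the two limit measures, applied to the defect sequence $v_j=u_j-u$.

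\textbf{Step 1 (reduction to $u=0$).} Set $v_j:=u_j-u$, so that $v_j\weak 0$ in $D^{2,q}(\mathbb R^N)$. By the compact embedding $W^{2,q}_{loc}\hookrightarrow L^q_{loc}$, we have $v_j\to0$ in $L^q_{loc}$ and a.e. The Brezis--Lieb lemma, applied to $\int\phi|u_j|^{q^{**}}$ for each $\phi\in C_c(\mathbb R^N)$, gives
\[
\nu_q=|u|^{q^{**}}dx+\bar\nu, \qquad |v_j|^{q^{**}}dx\stackrel{*}{\rightharpoonup}\bar\nu .
\]
Passing to a subsequence, also $|\Delta v_j|^q dx\stackrel{*}{\rightharpoonup}\bar\mu$.

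\textbf{Step 2 (reverse Hölder inequality).} Take $\phi\in C_c^\infty(\mathbb R^N)$ and apply the Sobolev inequality \eqref{S1pS2q} to $\phi v_j$:
\[
S_{2,q}\Big(\int|\phi v_j|^{q^{**}}\Big)^{q/q^{**}}\le\int|\phi\Delta v_j+2\nabla\phi\cdot\nabla v_j+v_j\Delta\phi|^q .
\]
The lower-order terms vanish in the limit, because $\nabla v_j\to0$ and $v_j\to0$ in $L^q_{loc}$ by compactness of $W^{2,q}_{loc}\hookrightarrow W^{1,q}_{loc}$. Using Minkowski's inequality, we obtain
\[
S_{2,q}\Big(\int|\phi|^{q^{**}}d\bar\nu\Big)^{q/q^{**}}\le\int|\phi|^q d\bar\mu .
\]
Since $q^{**}>q$, Lions' measure lemma applies: this reverse Hölder inequality forces $\bar\nu$ to be purely atomic, $\bar\nu=\sum_{k}\nu_{q,k}\delta_{x_{q,k}}$, with an at most countable index set. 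Letting $\phi$ concentrate at $x_{q,k}$ gives
\[
S_{2,q}\,\nu_{q,k}^{q/q^{**}}\le\bar\mu(\{x_{q,k}\}).
\]

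\textbf{Step 3 (lower bound on $\mu_q$).} This is the main obstacle. There is no Brezis--Lieb splitting available for $|\Delta u_j|^q$ when $q\neq2$, so the two pieces of the lower bound are obtained separately.
\begin{itemize}
\item \emph{Absolutely continuous part.} Weak lower semicontinuity gives $\mu_q\ge|\Delta u|^q dx$: for $\phi\ge0$, the map $w\mapsto\int\phi|\Delta w|^q$ is convex and continuous, hence weakly lower semicontinuous.
\item \emph{Atoms.} Set $\mu_{q,k}:=\mu_q(\{x_{q,k}\})$. To bound it from below, repeat Step 2 with $u_j$ in place of $v_j$ and $\phi=\phi_\varepsilon$ concentrating at $x_{q,k}$. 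The terms involving $\nabla u$ and $u$ on the shrinking support vanish in the limit, so
\[
S_{2,q}\Big(\int|\phi_\varepsilon|^{q^{**}}d\nu_q\Big)^{q/q^{**}}\le\int|\phi_\varepsilon|^q d\mu_q+o_\varepsilon(1),
\]
which yields $\mu_{q,k}\ge S_{2,q}\nu_{q,k}^{q/q^{**}}>0$.
\end{itemize}
The absolutely continuous part and the atoms are mutually singular, so the two bounds add and give $\mu_q\ge|\Delta u|^q dx+\sum_k\mu_{q,k}\delta_{x_{q,k}}$.

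\textbf{Summability.} Finally,
\[
\sum_k\nu_{q,k}^{q/q^{**}}\le S_{2,q}^{-1}\mu_q(\mathbb R^N)<\infty,
\]
since $(u_j)$ is bounded in $D^{2,q}(\mathbb R^N)$.
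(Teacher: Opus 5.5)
Your proposal is correct and is essentially the argument the paper invokes: the paper simply defers to Lions' localization argument for the embedding $D^{2,q}(\mathbb R^N)\hookrightarrow L^{q^{**}}(\mathbb R^N)$ with norm $\|\Delta u\|_q$ and constant $S_{2,q}$. You have written that argument out, and you correctly avoid any Brezis--Lieb splitting of $|\Delta u_j|^q$ by bounding the atoms of $\mu_q$ directly with cutoffs applied to $u_j$. The one detail left implicit is the scaling estimate behind the vanishing of the lower-order terms as $\varepsilon\to0$, namely $\varepsilon^{-1}\|\nabla u\|_{L^q(B_{2\varepsilon}(x_{q,k}))}+\varepsilon^{-2}\|u\|_{L^q(B_{2\varepsilon}(x_{q,k}))}\le C\bigl(\|\nabla u\|_{L^{q^*}(B_{2\varepsilon}(x_{q,k}))}+\|u\|_{L^{q^{**}}(B_{2\varepsilon}(x_{q,k}))}\bigr)\to0$.
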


\begin{proof}
 The  assertion follows from 
the same localization argument of the classical concentration--compactness lemma of Lions \cite[Lemma I.1]{MR834360}, applied to the critical Sobolev
embedding of $ D^{2,q}(\mathbb R^N)$ endowed with the equivalent
norm $\|\Delta u\|_{q}$, and hence with the corresponding optimal constant $S_{2,q}$.
\end{proof}

For the rest of this section, let us unify the functional associated to either problem \eqref{prob1} or problem \eqref{intro:problem-p-less-q-large} in the following manner:

Let
\[
\tau:=
\begin{cases}
0, & \text{if } q>p^*,\\
1, & \text{if } q<p^*.
\end{cases}
\]
We define the functional \(\Phi_\tau\in C^1(\mathcal W,\mathbb{R})\) by
\begin{align}\label{energy-tau}
\Phi_\tau(u)
&:=
\frac{1}{q}\int_{\mathbb{R}^N}|\Delta u|^q\,dx
+\frac{1}{p}\int_{\mathbb{R}^N}|\nabla u|^p\,dx
-\frac{\lambda}{r_\sigma}
\int_{\mathbb{R}^N}\frac{|u|^{r_\sigma}}{|x|^\sigma}\,dx
\nonumber\\
&\quad
-\frac{1}{q^{**}}\int_{\mathbb{R}^N}|u|^{q^{**}}\,dx
-\frac{\tau}{p^*}\int_{\mathbb{R}^N}|u|^{p^*}\,dx,
\qquad u\in \mathcal W.
\end{align}

\begin{lemma}\label{r3}
    Let $\set{u_j}_{j\in\N}\subset \mathcal W$ be a bounded $(PS)$ sequence for $\Phi_\tau$. Then  the family $\J_q$ given in Lemma \ref{lem:Lions} is  finite.
\end{lemma}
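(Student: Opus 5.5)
The plan follows the classical Lions localization argument: test the $(PS)$ condition against cut-offs concentrating at each atom, show that every atom carries a fixed positive amount of mass, and then use the summability in Lemma~\ref{lem:Lions} to conclude that there can only be finitely many atoms.

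\textbf{Setup.} Since $(u_j)$ is bounded in $\mathcal W$, we pass to a subsequence with
\begin{itemize}
\item $u_j\weak u$ in $\mathcal W$;
\item $u_j\to u$ a.e. and in $L^r_{loc}$ for $r<q^{**}$;
\item $|\Delta u_j|^q\,dx\stackrel{*}{\rightharpoonup}\mu_q$ and $|u_j|^{q^{**}}dx\stackrel{*}{\rightharpoonup}\nu_q$;
\item if $\tau=1$, also $|\nabla u_j|^p\,dx\stackrel{*}{\rightharpoonup}\mu_p$ and $|u_j|^{p^*}dx\stackrel{*}{\rightharpoonup}\nu_p$.
\end{itemize}
Fix $k\in\J_q$ and $\psi\in C_c^\infty(\R^N)$ with $0\le\psi\le1$, $\psi=1$ on $B_1$ and $\operatorname{supp}\psi\subset B_2$. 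Set $\psi_\varepsilon(x)=\psi((x-x_{q,k})/\varepsilon)$. The sequence $(\psi_\varepsilon u_j)_j$ is bounded in $\mathcal W$, so $\Phi_\tau'(u_j)(\psi_\varepsilon u_j)\to0$ as $j\to\infty$.

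\textbf{Expanding the derivative.} Expanding $\Delta(\psi_\varepsilon u_j)=\psi_\varepsilon\Delta u_j+2\nabla\psi_\varepsilon\cdot\nabla u_j+u_j\Delta\psi_\varepsilon$ gives
\[
\int\psi_\varepsilon|\Delta u_j|^q+\int\psi_\varepsilon|\nabla u_j|^p=\lambda\int\frac{\psi_\varepsilon|u_j|^{r_\sigma}}{|x|^\sigma}+\int\psi_\varepsilon|u_j|^{q^{**}}+\tau\int\psi_\varepsilon|u_j|^{p^*}+E_{j,\varepsilon}+o_j(1).
\]
The error $E_{j,\varepsilon}$ collects the terms with derivatives of $\psi_\varepsilon$:
\[
\int|\Delta u_j|^{q-2}\Delta u_j\,(2\nabla\psi_\varepsilon\cdot\nabla u_j+u_j\Delta\psi_\varepsilon),\qquad \int|\nabla u_j|^{p-2}\nabla u_j\cdot\nabla\psi_\varepsilon\,u_j .
\]

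\textbf{Controlling the error (main obstacle).} We must show $\lim_{\varepsilon\to0}\limsup_j|E_{j,\varepsilon}|=0$. This is where the difficulty lies.
\begin{itemize}
\item By H\"older, each error term is bounded by $C\|\Delta u_j\|_q^{q-1}$ (or $C\|\nabla u_j\|_p^{p-1}$) times a term of the form $\|u_j\nabla\psi_\varepsilon\|$ or $\|\nabla u_j\,\nabla\psi_\varepsilon\|$ in the appropriate Lebesgue space.
\item Since $\mathcal W$ embeds compactly into $W^{1,q}_{loc}$ and $L^p_{loc}$ (first derivatives of a $D^{2,q}$ function lie in $L^{q^*}$, and $q<q^*$), on the fixed annulus these converge as $j\to\infty$ to the same quantities with $u_j$ replaced by $u$.
\item Scaling then finishes the estimate:
\[
\|u\,\Delta\psi_\varepsilon\|_q\le C\|u\|_{L^{q^{**}}(B_{2\varepsilon}(x_{q,k}))}\to0,\qquad \|\nabla u\cdot\nabla\psi_\varepsilon\|_q\le C\|\nabla u\|_{L^{q^*}(B_{2\varepsilon}(x_{q,k}))}\to0,
\]
and the analogous bound holds for the $p$-term using $u\in L^{p^*}$.
\item The weighted term also vanishes. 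By Corollary~\ref{cor compact emb}, $\mathcal W$ embeds compactly into $L^{r_\sigma}_\sigma(\R^N)$, so it converges to $\int\psi_\varepsilon|u|^{r_\sigma}|x|^{-\sigma}$, which tends to $0$ as $\varepsilon\to0$ by absolute continuity.
\end{itemize}

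\textbf{Lower bound on each atom.} Letting $j\to\infty$ and then $\varepsilon\to0$, and discarding the nonnegative $p$-gradient term on the left, gives
\[
\mu_q(\{x_{q,k}\})\le\nu_{q,k}+\tau\,\nu_p(\{x_{q,k}\}).
\]
\begin{itemize}
\item If $\tau=0$, Lemma~\ref{lem:Lions} gives $S_{2,q}\nu_{q,k}^{q/q^{**}}\le\mu_{q,k}\le\nu_{q,k}$, hence $\nu_{q,k}\ge S_{2,q}^{N/(2q)}$.
\item If $\tau=1$, we apply the analogous Lions lemma in $D^{1,p}$, which yields $S_{1,p}\nu_p(\{x\})^{p/p^*}\le\mu_p(\{x\})$. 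We then keep the $p$-gradient term on the left and obtain
\[
S_{2,q}\nu_{q,k}^{q/q^{**}}+S_{1,p}\nu_{p,k}^{p/p^*}\le\nu_{q,k}+\nu_{p,k}.
\]
Since both exponents $q/q^{**}$ and $p/p^*$ are less than $1$, at least one of $\nu_{q,k},\nu_{p,k}$ must exceed a constant $\delta>0$ depending only on $N,p,q$.
\end{itemize}

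\textbf{Conclusion.} In both cases each atom $x_{q,k}$ carries total mass $\nu_q+\nu_p\ge\delta$ there. The measures $\nu_q$ and $\nu_p$ are finite, being weak-$*$ limits of measures of uniformly bounded mass (by the Sobolev embeddings). Hence only finitely many such atoms can exist, so $\J_q$ is finite.
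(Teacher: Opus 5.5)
Your proof is correct. It uses the same Lions-type localization as the paper: you test $\Phi_\tau'(u_j)$ against $\psi_\varepsilon u_j$ and remove the cut-off errors by H\"older plus scaling. It departs from the paper in two places.

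First, for $\tau=1$ you bring in a second pair of measures $\mu_p,\nu_p$ and the $D^{1,p}$ version of Lions' lemma, and you get only a lower bound on the combined mass $\nu_q+\nu_p$ at each atom. The paper instead uses that $p<q^*$ gives $p^*<q^{**}$, so $u_j\to u$ in $L^{p^*}_{loc}$ and $\nabla u_j\to\nabla u$ in $L^p_{loc}$. The $p$-terms then vanish in the double limit at $x_{q,k}$, which yields $\mu_q(\{x_{q,k}\})=\nu_{q,k}$, and hence $\nu_{q,k}\ge S_{2,q}^{N/(2q)}$, in both regimes. Your own setup already contains this fact: convergence in $L^r_{loc}$ for every $r<q^{**}$ includes $r=p^*$. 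So $\nu_p=|u|^{p^*}dx$ is atomless, and your $\tau=1$ case collapses to the $\tau=0$ one. What the detour buys is that your handling of the $p$-Laplacian and $p^*$ terms never uses $p<q^*$; you only need $u_j\to u$ in $L^p_{loc}$. It would therefore survive in a regime where the $p^*$-term could also concentrate.

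Second, you conclude by comparing with the finite total mass $\nu_q(\R^N)+\nu_p(\R^N)$. This is more direct than the paper's contradiction through $\sum_k\nu_{q,k}^{q/q^{**}}<\infty$. Discarding the gradient term, so working with an inequality rather than the paper's equality, also slightly simplifies the bookkeeping.
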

\begin{proof}
Since $\mathcal W$ is a reflexive Banach space and  $\set{u_j}_{j\in\N}\subset \mathcal W$ is bounded, there is $u\in \mathcal W$ such that $u_j\rightharpoonup u$ weakly in $ \mathcal W$, up to a subsequence. Then Lemma \ref{lem:Lions} can be applied. Let us fix $k\in \J_q$ and $x_{q,k}$ the corresponding point in $\mathbb R^N$, given by Lemma \ref{lem:Lions}. Let $\varphi\in C_c^\infty(\R^N)$ such that $0\leq\varphi\leq1$ and
    \begin{eqnarray*}
          \varphi(x)=
        1\quad\text{if }\,\,\,x\in B_1(0),\qquad
       \varphi(x)= 0\quad\text{if }\,\,\,x\in \R^N\backslash B_2(0).
    \end{eqnarray*}
    For $\theta>0$, let us also define 
    \[
    \varphi_\theta(x)=\varphi\seq{\frac{x-x_{q,k}}{\theta}}.
    \]
    It is clear that there exists a constant $C>0$, independent of $\theta$, such that $|\nabla\varphi_\theta(x)|\leq C/\theta$ and $|\Delta \varphi_\theta(x)|\leq C/\theta^2$ for all $x\in\R^N$. Notice that $\varphi_\theta(x)\to\mathcal{X}_{\set{x_{q,k}}}(x)$ as $\theta\to0$ for each $x\in\R^N$, where $\mathcal{X}_A$ is the characteristic function of the set $A\subset\R^N$.
Moreover, for each  fixed $\theta>0$, one has that $\set{\varphi_\theta u_j}_{j\in\N}$ is a bounded sequence in $\mathcal W$. So  
    we have $\Phi^\prime_\tau(u_j)(\varphi_\theta u_j)=o(1)$ as $j\to\infty$, which means that
    \begin{align}\label{r3-1}
        o(1)=&\int_{\R^N}|\Delta u_j|^q\varphi_\theta\,dx-\int_{\R^N}|u_j|^{q^{**}}\varphi_\theta\,dx+
        \int_{\R^N}|\nabla u_j|^p\varphi_\theta\,dx-\tau\int_{\R^N}|u_j|^{p^*}\varphi_\theta\,dx\nonumber\\
        &-\lambda\int_{\R^N}\frac{|u_j|^{r_\sigma}}{|x|^\sigma}\varphi_\theta\,dx
        +\int_{\R^N}u_j|\Delta u_j|^{q-2}\Delta u_j\Delta \varphi_\theta \,dx\nonumber\\
        &+2\int_{\R^N}|\Delta u_j|^{q-2}\Delta u_j\nabla\varphi_\theta\cdot\nabla u_j\,dx +\int_{\R^N}u_j|\nabla u_j|^{p-2}\nabla u_j\cdot\nabla\varphi_\theta \,dx.
    \end{align}
    Since $u_j\rightharpoonup u$ in $\mathcal W$, and the embedding $\mathcal W\hookrightarrow L^{r_\sigma}_\sigma(\R^N)$ is compact, then $u_j\to u$ in $L^{r_\sigma}_\sigma(\R^N)$ and
    \[
\lim_{j\to\infty}\int_{\R^N}\frac{|u_j|^{r_\sigma}}{|x|^\sigma}\varphi_\theta\,dx
=\int_{\R^N}\frac{|u|^{r_\sigma}}{|x|^\sigma}\varphi_\theta\,dx.
    \]
Notice that, since we have compactness of the embeddings  $\mathcal W\hookrightarrow L^{p^*}_{loc}(\R^N)$ and $W^{1,p}_{loc}(\R^N)$ (due to $p<q^*$), we also get
\[\lim_{j\to\infty}\int_{\R^N}|\nabla u_j|^p\varphi_\theta\,dx
=\int_{\R^N}|\nabla u|^p\varphi_\theta\,dx\quad\text{and}\quad \lim_{j\to\infty}\int_{\R^N}|u_j|^{p^*}\varphi_\theta\,dx=\int_{\R^N}|u|^{p^*}\varphi_\theta\,dx.
\]
  Now, since $\varphi_\theta(x)\to 0$ a.e. on $\R^N$ as $\theta\searrow0$,  by Lebesgue's Dominated Convergence Theorem we obtain
    \begin{eqnarray}\label{r3-2}
\lim_{\theta\searrow0}\int_{\R^N}\frac{|u|^{r_\sigma}}{|x|^\sigma}\varphi_\theta\,dx=0,\ \ \lim_{\theta\searrow0}\int_{\R^N}|\nabla u|^p\varphi_\theta\,dx=0\ \ \text{and}\ \ \lim_{\theta\searrow0}\int_{\R^N}|u|^{p^*}\varphi_\theta\,dx=0. 
    \end{eqnarray}
We also have
\begin{eqnarray*}
    \left|\int_{\R^N}u_j|\Delta u_j|^{q-2}\Delta u_j\Delta\varphi_\theta \, dx\right|&\leq&\int_{\R^N}|\Delta u_j|^{q-1}|\Delta \varphi_\theta|^{1/2}|u_j||\Delta\varphi_\theta|^{1/2}\,dx\nonumber\\
    &\leq&\frac{1}{\theta^2}\seq{\int_{\R^N}|\Delta u_j|^{(q-1)(q^{**})^\prime}\left|\Delta\varphi\seq{\frac{x-x_{q,k}}{\theta}}\right|^{(q^{**})^\prime/2}dx}^{1/(q^{**})^\prime}\nonumber\\
    &&\times\seq{\int_{\R^N}|u_j|^{q^{**}}\left|\Delta\varphi\seq{\frac{x-x_{q,k}}{\theta}}\right|^{q^{**}/2}dx}^{1/q^{**}}.\nonumber
    \end{eqnarray*}
  So, since $\{\Delta u_j\}$ is bounded in $L^q(\R^N)$ and $1/(q^{**})'-1/q'=2/N$, we get  
\begin{eqnarray*}    
   \left|\int_{\R^N}u_j|\Delta u_j|^{q-2}\Delta u_j\Delta\varphi_\theta \, dx\right|  &\leq&C\seq{\int_{\R^N}|u_j|^{q^{**}}\left|\Delta\varphi\seq{\frac{x-x_{q,k}}{\theta}}\right|^{q^{**}/2}}^{1/q^{**}}
\end{eqnarray*}
where $C>0$ is a constant independent of $j$ and $\theta$. Now, notice that
\begin{eqnarray*}
\int_{\R^N}|u_j|^{q^{**}}\left|\Delta\varphi\seq{\frac{x-x_{q,k}}{\theta}}\right|^{q^{**}/2}dx\to\int_{\R^N}\left|\Delta\varphi\seq{\frac{x-x_{q,k}}{\theta}}\right|^{q^{**}/2}d\nu_q
\end{eqnarray*}
as $j\to\infty$, and
\begin{eqnarray*}
\int_{\R^N}\left|\Delta\varphi\seq{\frac{x-x_{q,k}}{\theta}}\right|^{q^{**}/2}d\nu_q\to0
\end{eqnarray*}
as $\theta\searrow0$ by Lebesgue's theorem. Similarly, one can verify that
    \begin{eqnarray}\label{r3-6}
        \left|\int_{\R^N}u_j|\nabla u_j|^{p-2}\nabla u_j\cdot\nabla \varphi_\theta \,dx\right|\leq C \seq{\int_{\R^N}|u_j|^{p^*}\left|\nabla \varphi\seq{\frac{x-x_{q,k}}{\theta}}\right|^{p^*/2}dx}^{1/p^*}
    \end{eqnarray}
    for some constant $C>0$ independent of $j$ and $\theta$, and that
    \begin{eqnarray*}
        \int_{\R^N}|u_j|^{p^*}\left|\nabla \varphi\seq{\frac{x-x_{q,k}}{\theta}}\right|^{p^*/2}dx\to\int_{\R^N}|u|^{p^*}\left|\nabla \varphi\seq{\frac{x-x_{q,k}}{\theta}}\right|^{p^*/2}dx
    \end{eqnarray*}
    as $j\to\infty$, and
    \begin{eqnarray*}
\int_{\R^N}|u|^{p^*}\left|\nabla \varphi\seq{\frac{x-x_{q,k}}{\theta}}\right|^{p^*/2}dx\to0
    \end{eqnarray*}
    as $\theta\searrow0$.
    We also have
\begin{align*}
\left|
\int_{\mathbb R^N}
|\Delta u_j|^{q-2}\Delta u_j
\nabla\varphi_\theta\cdot\nabla u_j\,dx
\right|
&\le
C\left(
\frac1{\theta^q}
\int_{\mathbb R^N}
\left|\nabla\varphi\left(\frac{x-x_{q,k}}{\theta}\right)\right|^q
|\nabla u_j|^q\,dx
\right)^{1/q}.
\end{align*}
For every fixed $\theta>0$, the strong convergence
$\nabla u_j\to\nabla u$ in $L^q_{loc}(\R^N)$ gives
\[
\frac1{\theta^q}
\int_{\mathbb R^N}
\left|\nabla\varphi\left(\frac{x-x_{q,k}}{\theta}\right)\right|^q
|\nabla u_j|^q\,dx
\rightarrow
\frac1{\theta^q}
\int_{\mathbb R^N}
\left|\nabla\varphi\left(\frac{x-x_{q,k}}{\theta}\right)\right|^q
|\nabla u|^q\,dx.
\]
Using $1/q=1/q^*+1/N$ and arguing as in \eqref{r3-6}, we obtain
\begin{equation}\label{r3-8}
\left(
\frac1{\theta^q}
\int_{\mathbb R^N}
\left|\nabla\varphi\left(\frac{x-x_{q,k}}{\theta}\right)\right|^q
|\nabla u|^q\,dx
\right)^\frac{1}{q}
\le
C\left(
\int_{\mathbb R^N}
|\nabla u|^{q^*}
\left|\nabla\varphi\left(\frac{x-x_{q,k}}{\theta}\right)\right|^\frac{q^*}{2}
dx
\right)^\frac{1}{q^*}.
\end{equation}
The right-hand side tends to zero as $\theta\searrow0$ by the dominated
convergence theorem.

    Now, using all the estimates between \eqref{r3-2} and \eqref{r3-8}, and after taking the limit of \eqref{r3-1} first with $j
\to\infty$ and then $\theta\searrow0$, one gets
\begin{eqnarray}\label{r3-12}
    \nu_{q,k}=\mu_{q,k},\quad\forall k\in \J_q.
\end{eqnarray}
Suppose that $\#\J_q=\infty$. Then $\nu_{q,k}\to0$ as $k\to\infty$ by Lemma \ref{lem:Lions}. Let $k_0\in\N$ such that 
$\nu_{q,k}<1$ for all $k\geq k_0$. Then
\begin{eqnarray}\label{r3-13}
    \sum_{k\geq k_0}(\nu_{q,k})^{q/q^{**}}\geq\sum_{k\geq k_0}\nu_{q,k}= \sum_{k\geq k_0}\mu_{q,k}.
\end{eqnarray}

\noindent By Lemma~\ref{lem:Lions} and \eqref{r3-12},
\[
\mu_{q,k}=\nu_{q,k}
\le
S_{2,q}^{-q^{**}/q}(\mu_{q,k})^{q^{**}/q},
\]
thus, $\mu_{q,k}$ are uniformly bounded away from zero. If $\J_q$ were infinite, \eqref{r3-13}  would therefore imply
\[
\sum_{k\ge k_0}
(\nu_{q,k})^{q/q^{**}}
=\infty.
\]
which contradicts Lemma \ref{lem:Lions}. This shows that we have $\# \J_q<\infty$.
\end{proof}

\begin{lemma}\label{s0}
    Let $\set{u_j}_{j\in\N}\subset \mathcal{W}$ be a bounded $(PS)$ sequence of $\Phi_\tau$. Let $\set{x_{q,k}}_{k\in \J_q}\subset\R^N$ as in Lemma \ref{r3}. If $K\subset\subset\R^N\backslash\set{x_{q,k}:k\in \J_q}$, then $u_j\to u$ in $L^{q^{**}}(K)$.
\end{lemma}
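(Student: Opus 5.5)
The plan is to use the measure decomposition of Lemma~\ref{lem:Lions} together with the finiteness from Lemma~\ref{r3}, and then upgrade weak convergence plus convergence of the $L^{q^{**}}$ norm on $K$ to strong convergence via Brezis--Lieb.

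First, since $\{u_j\}$ is bounded in $\mathcal W$, we extract a subsequence with $u_j\weak u$ in $\mathcal W$. Lemma~\ref{lem:Lions} gives $|u_j|^{q^{**}}dx\stackrel{*}{\weak}\nu_q=|u|^{q^{**}}dx+\sum_{k\in\J_q}\nu_{q,k}\delta_{x_{q,k}}$. By Lemma~\ref{r3}, $\J_q$ is finite. Hence the closed set $\{x_{q,k}:k\in\J_q\}$ has positive distance $d>0$ from the compact set $K$.

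Next, choose $\psi\in C_c^\infty(\R^N)$ with $0\le\psi\le1$, $\psi\equiv1$ on $K$, and $\operatorname{supp}\psi\subset\{x:\dist{x}{K}<d/2\}$. Then $\psi$ vanishes at every $x_{q,k}$. Testing the weak-$*$ convergence against $\psi$ gives
\[
\int_{\R^N}|u_j|^{q^{**}}\psi\,dx\to\int_{\R^N}|u|^{q^{**}}\psi\,dx.
\]
The same argument with $\psi^{q^{**}}$ in place of $\psi$ shows $\|\psi u_j\|_{q^{**}}\to\|\psi u\|_{q^{**}}$.

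To conclude, we need $u_j\to u$ a.e. This follows from the compact local embedding $D^{2,q}(\R^N)\hookrightarrow L^{m}_{loc}(\R^N)$ for $m<q^{**}$ (Rellich on balls), after passing to a further subsequence. Since $\psi u_j\to\psi u$ a.e. with converging $L^{q^{**}}$ norms, the Brezis--Lieb lemma (equivalently, uniform convexity of $L^{q^{**}}$ together with $\psi u_j\weak\psi u$) gives $\psi u_j\to\psi u$ in $L^{q^{**}}(\R^N)$. In particular $u_j\to u$ in $L^{q^{**}}(K)$. Because the limit $u$ is determined, the usual subsequence argument shows that the whole extracted subsequence converges, which is all that is claimed.

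The main point requiring care is the separation step. Finiteness of $\J_q$ from Lemma~\ref{r3} is essential here: with infinitely many atoms, they could accumulate on $K$ even though $K$ avoids each of them, and no cutoff $\psi$ vanishing at all atoms could equal $1$ on $K$. Everything else is routine measure convergence.
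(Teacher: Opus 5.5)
Your proof is correct and follows essentially the same route as the argument the paper invokes by citation (dos Santos, Lemma~3.6). The finiteness of $\J_q$ from Lemma~\ref{r3} lets you choose a cutoff that is $1$ on $K$ and vanishes at every atom, which gives $\int_{\R^N}|u_j|^{q^{**}}\psi^{q^{**}}\,dx\to\int_{\R^N}|u|^{q^{**}}\psi^{q^{**}}\,dx$; convergence of norms together with weak (or a.e.) convergence then upgrades this to strong convergence in $L^{q^{**}}$.

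The Radon--Riesz variant you mention in parentheses already works along the whole subsequence, because $\psi u_j\weak\psi u$ in $L^{q^{**}}(\R^N)$ follows directly from $u_j\weak u$ in $\mathcal W$. So the extra a.e. extraction and the sub-subsequence argument are not needed.
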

\begin{proof}
    Identical to the proof contained in \cite[Lemma 3.6]{dos2010positive}.
\end{proof}

\begin{lemma}\label{mbs1}
    Let $\set{u_j}_{j\in\N}$ be a bounded $(PS)$ sequence of $\Phi_\tau$, and let $\set{x_{q,k}}_{k\in \J_q}$ be as Lemma \ref{r3}. If $K\subset\subset\R^N\backslash\set{x_{q,k}:k\in \J_q}$, then $\Delta u_j\to\Delta u$ in $L^{q}(K)$.
\end{lemma}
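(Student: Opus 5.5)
The plan is to combine the local strong $L^{q^{**}}$ convergence from Lemma~\ref{s0} with the monotonicity inequality \eqref{basic ineq} for the map $\xi\mapsto|\xi|^{q-2}\xi$, tested against a cut-off of $u_j-u$.

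\textbf{Step 1 (test function).} Fix $K\subset\subset\R^N\setminus\{x_{q,k}:k\in\J_q\}$. The set $\J_q$ is finite by Lemma~\ref{r3}, so we can choose an open $K'$ with $K\subset\subset K'\subset\subset\R^N\setminus\{x_{q,k}\}$ and $\psi\in C_c^\infty(K')$ with $0\le\psi\le1$ and $\psi\equiv1$ on $K$. The sequence $\psi(u_j-u)$ is bounded in $\mathcal W$, so
\[
\Phi_\tau'(u_j)(\psi(u_j-u))=o(1).
\]
Since $u_j\weak u$, we also have
\[
\int|\Delta u|^{q-2}\Delta u\,\Delta(\psi(u_j-u))\,dx\to0 .
\]
Subtracting the two relations gives
\[
\int\psi\big(|\Delta u_j|^{q-2}\Delta u_j-|\Delta u|^{q-2}\Delta u\big)(\Delta u_j-\Delta u)\,dx=o(1)+R_j,
\]
where $R_j$ collects all remaining terms.

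\textbf{Step 2 (the remainder $R_j$).} I will show $R_j\to0$ term by term.

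\begin{enumerate}
\item The commutator terms
\[
\int |\Delta u_j|^{q-2}\Delta u_j\big(2\nabla\psi\cdot\nabla(u_j-u)+(u_j-u)\Delta\psi\big)\,dx
\]
tend to $0$ by H\"older. The first factor is bounded in $L^{q'}$. The second tends to $0$ in $L^q(K')$ because of the compact embedding of $D^{2,q}$ into $W^{1,q}_{loc}$, which gives $\nabla u_j\to\nabla u$ and $u_j\to u$ in $L^q_{loc}$.
\item The corresponding terms with $u$ in place of $u_j$ are handled the same way.
\item The $p$-Laplacian terms $\int|\nabla u_j|^{p-2}\nabla u_j\cdot\nabla(\psi(u_j-u))\,dx$ tend to $0$. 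This uses strong convergence $\nabla u_j\to\nabla u$ in $L^p_{loc}$, which holds since $p<q^*$ and $W^{2,q}_{loc}\hookrightarrow W^{1,p}_{loc}$ compactly, as already used in Lemma~\ref{r3}.
\item The weighted term tends to $0$ by the compact embedding of Corollary~\ref{cor compact emb}, combined with H\"older in $L^{r_\sigma}_\sigma$.
\item The $\tau$-term tends to $0$ by strong $L^{p^*}_{loc}$ convergence.
\item The critical term is bounded by
\[
\Big|\int|u_j|^{q^{**}-2}u_j\psi(u_j-u)\,dx\Big|\le\|u_j\|_{q^{**}}^{q^{**}-1}\|u_j-u\|_{L^{q^{**}}(K')}\to0
\]
by Lemma~\ref{s0} applied on $\overline{K'}$.
\end{enumerate}

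\textbf{Step 3 (conclusion via \eqref{basic ineq}).} Step~2 yields
\[
\int_{K'}\psi\big(|\Delta u_j|^{q-2}\Delta u_j-|\Delta u|^{q-2}\Delta u\big)(\Delta u_j-\Delta u)\,dx\to0 .
\]
\begin{itemize}
\item If $q\ge2$, inequality \eqref{basic ineq} gives $\int_K|\Delta u_j-\Delta u|^q\,dx\to0$ directly.
\item If $1<q<2$, write $|a|^q=\big(|a|^2(|z|+|\xi|)^{q-2}\big)^{q/2}(|z|+|\xi|)^{(2-q)q/2}$ and apply H\"older with exponents $2/q$ and $2/(2-q)$. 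Because $\Delta u_j$ is bounded in $L^q$, this again gives $\|\Delta u_j-\Delta u\|_{L^q(K)}\to0$.
\end{itemize}

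The main obstacle is the critical term in item~6. It cannot be handled by global compactness, and it is exactly why $K'$ must avoid the concentration points. The finiteness of $\J_q$ from Lemma~\ref{r3} is what makes this separation possible, and Lemma~\ref{s0} is what makes the term vanish. The remaining terms are routine once the local compact embeddings are in place.
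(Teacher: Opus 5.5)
Your proof is correct and follows essentially the same route as the paper. Both test $\Phi_\tau'(u_j)$ with a cutoff of $u_j-u$ supported away from the finitely many concentration points, remove the commutator, $p$-Laplacian, weighted and critical terms via local compact embeddings and Lemma~\ref{s0}, and conclude with the monotonicity inequality \eqref{basic ineq}; your explicit H\"older argument for $1<q<2$ spells out a step the paper leaves implicit.
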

\begin{proof}
    Let $\delta=\text{dist}(K,\set{x_{q,k}:k\in \J_q})$. Notice that $\delta>0$ by Lemma \ref{r3}. For each $\theta\in(0,\delta)$, consider $N_\theta=\set{x\in\R^N:\text{dist}(x,K)<\theta}$ and $\xi_\theta\in C_c^\infty(\R^N)$, $0\leq\xi_\theta\leq1$ with
    \[
    \xi_\theta(x)=\begin{cases}
        1,&\quad\text{if }x\in N_{\theta/2},\\
        0,&\quad\text{if }x\in\R^N\backslash N_\theta.
    \end{cases}
    \]
    By inequality \eqref{basic ineq} we have
    \begin{eqnarray}\label{s1-1}
        0&\leq&\int_{K}\seq{|\Delta u_j|^{q-2}\Delta u_j-|\Delta u|^{q-2}\Delta u}(\Delta u_j-\Delta u)dx\nonumber\\
        &\leq&\int_{\R^N}\seq{|\Delta u_j|^{q-2}\Delta u_j-|\Delta u|^{q-2}\Delta u}(\Delta u_j-\Delta u)\xi_\theta dx\nonumber\\
        &=&\int_{\R^N}\seq{|\Delta u_j|^q\xi_\theta-|\Delta u_j|^{q-2}\Delta u_j\Delta u\xi_\theta-|\Delta u|^{q-2}\Delta u(\Delta u_j-\Delta u)\xi_\theta}dx.
    \end{eqnarray}
    Since $\Phi^\prime_\tau(u_j)\to0$ in $\mathcal W^*$ and $\set{\xi_\theta u_j}_{j\in\N}$ is bounded in $D^{2,q}(\R^N)\cap D^{1,p}(\R^N)$, we have
\[
\Phi_\tau'(u_j)\bigl(\xi_\theta u_j\bigr)
-
\Phi_\tau'(u_j)\bigl(\xi_\theta u\bigr)
=o(1).
\]
By the local compact embeddings, we have
\[
\nabla u_j\to\nabla u
\quad\text{in }(L^p_{\mathrm{loc}}(\R^N))^N,
\qquad
u_j\to u
\quad\text{in }L^{p^*}_{\mathrm{loc}}(\R^N).
\]
Moreover, Lemma \ref{s0} gives the local strong convergence in
\(L^{q^{**}}\) away from the concentration points, while the compactness of
the weighted embedding gives the convergence of the weighted term.
Consequently, all terms other than the \(q\)-biharmonic principal term are
\(o(1)\). Hence
    \begin{eqnarray*}
        \int_{\R^N}\seq{|\Delta u_j|^q\xi_\theta-|\Delta u_j|^{q-2}\Delta u_j\Delta u\xi_\theta}dx=\int_{\R^N}|\Delta u_j|^{q-2}\Delta u_j
\Delta\xi_\theta(u-u_j)dx\nonumber\\
+2\int_{\R^N}|\Delta u_j|^{q-2}\Delta u_j\nabla\xi_\theta\cdot\seq{\nabla u-\nabla u_j}dx+o(1).
\end{eqnarray*}
Now, define $A_\theta:=N_\theta\backslash\overline{N_{\theta/2}}$. By H\"older inequality one gets
\begin{eqnarray*}
    \int_{\R^N}|\Delta u_j|^{q-2}\Delta u_j
\Delta\xi_\theta(u-u_j)dx\leq C\seq{\int_{A_\theta}|u_j-u|^{q^{**}}dx}^{1/q^{**}}=o(1)
\end{eqnarray*}
by Lemma \ref{s0}. H\"older inequality again and $D^{2,q}(\mathbb R^N)\hookrightarrow W^{1,q}_{\mathrm{loc}}(\mathbb R^N)$ give us
\begin{eqnarray*}
    \left|\int_{\R^N}|\Delta u_j|^{q-2}\Delta u_j\nabla\xi_\theta\cdot\seq{\nabla u-\nabla u_j}dx\right|\leq C\seq{\int_{A_\theta}|\nabla u-\nabla u_j|^qdx}^{1/q}=o(1).
\end{eqnarray*}
 Also, since $\Delta u_j\rightharpoonup\Delta u$ in $L^q(\R^N)$, one also gets
\begin{eqnarray}\label{s1-5}
    \int_{\R^N}|\Delta u|^{q-2}\Delta u\xi_\theta(\Delta u_j-\Delta u)dx=o(1).
\end{eqnarray}
Now, combining the estimates from \eqref{s1-1} to \eqref{s1-5}  gives us
\[
\int_{K}\seq{|\Delta u_j|^{q-2}\Delta u_j-|\Delta u|^{q-2}\Delta u}(\Delta u_j-\Delta u)dx=o(1),
\]
and this together with inequality \eqref{basic ineq} yields
\[
\int_{K}|\Delta u_j-\Delta u|^qdx=o(1)
\]
and the conclusion follows.
\end{proof}

\begin{corollary}\label{s2}
    If $\set{u_j}_{j\in\N}$ is a bounded $(PS)$ sequence of $\Phi_\tau$, then there exists a subsequence, also denoted by $\set{u_j}_{j\in\N}$, such that the conclusions of Lemma \ref{lem:Lions} hold, with the addition that $\J_q$ is finite, and
    \begin{enumroman}
        \item $u_j\to u$ in $L^{q^{**}}(K)$ for every $K\subset\subset \R^N\backslash\set{x_{q,k}:k\in \J_q}$ and a.e. in $\mathbb R^N$,
        \item $\Delta u_j\to\Delta u$ in $L^{q}(K)$ for every $K\subset\subset \R^N\backslash\set{x_{q,k}:k\in \J_q}$ and  a.e. in $\mathbb R^N$,
        \item  $\nabla u_j\to\nabla u$ in $(L^{p}(K))^N$ for every $K\subset\subset \R^N$ and a.e. in $\mathbb R^N$,
        \item  $u_j\to u$ in $L^{p^*}(K)$ for every $K\subset\subset \R^N$ and a.e. in $\mathbb R^N$.
    \end{enumroman}
\end{corollary}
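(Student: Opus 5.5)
The plan is to assemble Corollary \ref{s2} from Lemmas \ref{lem:Lions}, \ref{r3}, \ref{s0} and \ref{mbs1}, together with the local compact embeddings of $\mathcal W$. The only work is bookkeeping with subsequences and a diagonal extraction to obtain a.e. convergence on all of $\mathbb R^N$.

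First, since $(u_j)$ is bounded in the reflexive space $\mathcal W$, I extract a subsequence with $u_j\weak u$ in $\mathcal W$. The sequences $|\Delta u_j|^q\,dx$ and $|u_j|^{q^{**}}\,dx$ are bounded measures, since $\|u_j\|_{q^{**}}\le C\|\Delta u_j\|_q$. By weak-$*$ compactness, a further subsequence gives the limits $\mu_q$ and $\nu_q$. Lemma \ref{lem:Lions} then applies, and Lemma \ref{r3} shows $\J_q$ is finite.

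Items (i) and (ii) on compact sets $K\subset\subset\mathbb R^N\setminus\{x_{q,k}\}$ are exactly Lemmas \ref{s0} and \ref{mbs1}. These lemmas do not pass to further subsequences in a way depending on $K$; if they did, I would exhaust the open set $\mathbb R^N\setminus\{x_{q,k}\}$ by countably many compacts $K_n$ and diagonalize.

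Items (iii) and (iv) follow from the compact embeddings $\mathcal W\hookrightarrow W^{1,p}_{loc}(\mathbb R^N)$ and $L^{p^*}_{loc}(\mathbb R^N)$, already used in Lemma \ref{r3}. These hold because $D^{2,q}\hookrightarrow W^{1,q^*}_{loc}$ compactly into $W^{1,p}_{loc}$ (as $p<q^*$), and because $u\mapsto u$ is compact from $W^{1,p}(B_R)\cap W^{2,q}(B_R)$ into $L^{p^*}(B_R)$ by interpolation with the bounded $L^{q^{**}}$ norm. Applying this on balls $B_n$ and diagonalizing gives convergence in $L^p(K)$ and $L^{p^*}(K)$ for every compact $K$.

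For the a.e. statements, convergence in $L^{p^*}(B_n)$ and $\nabla u_j\to\nabla u$ in $L^p(B_n)$ for all $n$ yield, after a diagonal subsequence, $u_j\to u$ and $\nabla u_j\to\nabla u$ a.e. on $\mathbb R^N$. For $\Delta u_j$, I take $K_n=\overline{B_n}\setminus\bigcup_k B_{1/n}(x_{q,k})$. Since $\J_q$ is finite, $\bigcup_n K_n=\mathbb R^N\setminus\{x_{q,k}:k\in\J_q\}$, whose complement is a finite set and hence null. Convergence in $L^q(K_n)$ then gives a.e. convergence on each $K_n$ after extraction, and a final diagonal argument gives a.e. convergence on $\mathbb R^N$.

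The main (mild) obstacle is ensuring that all extractions are compatible, so that a single subsequence serves every item simultaneously. This is handled by one diagonal argument over the countable family $\{K_n\}\cup\{B_n\}$. Passing to subsequences preserves both the weak-$*$ limits $\mu_q,\nu_q$ and the conclusions of Lemma \ref{lem:Lions}.
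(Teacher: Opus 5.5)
Your proposal is correct and follows essentially the same route as the paper. Items (i)--(ii) come from Lemmas \ref{s0} and \ref{mbs1} with a diagonal argument over an exhaustion of $\mathbb R^N\setminus\{x_{q,k}:k\in\J_q\}$, whose complement is null because $\J_q$ is finite; items (iii)--(iv) come from the local compact embeddings of $D^{2,q}(\mathbb R^N)$ guaranteed by $p<q^*$ (hence $p^*<q^{**}$). One small precision: the compactness into $W^{1,p}_{loc}$ comes from Rellich--Kondrachov applied to $\nabla u_j$, which is bounded in $W^{1,q}_{loc}$ and hence compact in $L^r_{loc}$ for $r<q^*$; the $W^{1,q^*}_{loc}$ bound by itself does not give compactness.
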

\begin{proof}
The first two items follow from Lemmas \ref{s0} and \ref{mbs1}, together
with a diagonal argument on an exhaustion of $\R^N\backslash\{x_{q,k}:k\in \J_q\}.$ Since $\J_q$ is finite, the excluded set has measure zero, which gives
the corresponding almost-everywhere convergence in $\R^N$.
The last two assertions follow from the local compact embeddings
\[
D^{2,q}(\R^N)\hookrightarrow\hookrightarrow W^{1,p}(K)
\qquad\text{and}\qquad
D^{2,q}(\R^N)\hookrightarrow\hookrightarrow L^{p^*}(K),
\]
which hold because $p<q^*$ and consequently $p^*<q^{**}$. 
\end{proof}

\section{The double critical case}\label{sec-prof11}

Solutions of problem \eqref{prob1} coincide with critical points of the $C^1$-functional $\Phi_\tau$ given in \eqref{energy-tau} with $\tau=1$. We will now investigate the compact properties of $\Phi_1$. Here we consider $1<p<q<p^*$ or $1<q<p<q^*$.

\begin{lemma}\label{lemma ps1}
Every $(PS)_c$ sequence of $\Phi_1$, with $c\in\mathbb R$, is bounded.
\end{lemma}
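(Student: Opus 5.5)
The plan is to combine the standard Ambrosetti--Rabinowitz comparison with the scaling $u\mapsto u_t$ from \eqref{ut definition}. The scaling is needed because the weighted term has exactly the same homogeneity as $I_s$, so it cannot be absorbed by Young's inequality alone.

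\emph{Step 1 (AR inequality).} Let $(u_j)$ be a $(PS)_c$ sequence for $\Phi_1$. Fix $\theta\in(\max\{p,q\},\min\{p^*,q^{**}\})$; this interval is nonempty in both regimes $1<p<q<p^*$ and $1<q<p<q^*$. From \eqref{sigma_and_rsigma} we get $r_\sigma=p$ at $\sigma=p$ and $r_\sigma=q$ at $\sigma=2q$, so $r_\sigma$ lies strictly between $p$ and $q$, hence $r_\sigma<\theta$. Computing $\Phi_1(u_j)-\frac1\theta\Phi_1'(u_j)u_j$ gives
\begin{align*}
&\Bigl(\tfrac1q-\tfrac1\theta\Bigr)\|\Delta u_j\|_q^q+\Bigl(\tfrac1p-\tfrac1\theta\Bigr)\|\nabla u_j\|_p^p+\Bigl(\tfrac1\theta-\tfrac1{p^*}\Bigr)\|u_j\|_{p^*}^{p^*}+\Bigl(\tfrac1\theta-\tfrac1{q^{**}}\Bigr)\|u_j\|_{q^{**}}^{q^{**}}\\
&\qquad\le c+1+o(1)\|u_j\|+\lambda^+\Bigl(\tfrac1{r_\sigma}-\tfrac1\theta\Bigr)\int_{\R^N}\frac{|u_j|^{r_\sigma}}{|x|^\sigma}\,dx .
\end{align*}

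\emph{Step 2 (the weighted term).} Write $a\in(0,1)$ for the unique number with $\frac{r_\sigma a}{p}+\frac{r_\sigma(1-a)}{q}=1$. I expect that \eqref{sigma_and_rsigma} is precisely the condition $\sigma=r_\sigma a+2r_\sigma(1-a)$. Hölder's inequality between $L^p_p$ and $L^q_{2q}$, followed by the Hardy and Hardy--Rellich inequalities (the first lemma of Section~\ref{sectionsobolev}), then gives
\[
\int_{\R^N}\frac{|u|^{r_\sigma}}{|x|^\sigma}\,dx\le C\,\|\nabla u\|_p^{r_\sigma a}\,\|\Delta u\|_q^{r_\sigma(1-a)}\le C\,I_s(u).
\]
This bound has the same order as the left-hand side, so it closes only when $\lambda$ is small. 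In general we must use the critical terms.

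\emph{Step 3 (contradiction via scaling).} Suppose $\|u_j\|\to\infty$; then $t_j^s:=I_s(u_j)\to\infty$. Set $v_j:=(u_j)_{1/t_j}$, so that $I_s(v_j)=1$ and $(v_j)$ is bounded in $\mathcal W$. The quadratic-type terms and the weighted term scale by exactly $t_j^s$. Each critical integral scales by a different power $t_j^{\kappa}$; for $p\in(q,q^*)$, for instance, $\|u_t\|_{q^{**}}^{q^{**}}=t^{\gamma q^{**}-N}\|u\|_{q^{**}}^{q^{**}}$. I will check that in the relevant regime both exponents exceed $s$, or otherwise handle each critical term separately.

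Dividing Step~1 by $t_j^s$ and using $\|u_j\|\le C t_j^{s/\min\{p,q\}}=o(t_j^s)$ yields three facts. First, $t_j^{\kappa-s}\|v_j\|^{q^{**}}_{q^{**}}$ and the analogous $p^*$ quantity stay bounded. Second, $\int|v_j|^{r_\sigma}|x|^{-\sigma}$ is bounded below by a positive constant, because $I_s(v_j)=1$. Third, by Corollary~\ref{cor compact emb}, the weak limit $v$ of $(v_j)$ is nonzero.

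Next, divide the scaled identity $\Phi_1'(u_j)u_j=o(\|u_j\|)$ by $t_j^s$. Comparing it with the scaled functional level $\Phi_1(u_j)/t_j^s\to0$ gives two relations between $\|\Delta v_j\|_q^q$, $\|\nabla v_j\|_p^p$, $\lambda\int|v_j|^{r_\sigma}|x|^{-\sigma}$ and the scaled critical terms. Eliminating the critical terms (this uses $\theta$-type coefficients with distinct exponents $p^*\ne q^{**}$, $p\ne q$) should force a positive combination of $\|\Delta v_j\|^q_q$ and $\|\nabla v_j\|^p_p$ to vanish in the limit, which contradicts $I_s(v_j)=1$.

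\emph{Main obstacle.} The hardest step is the last one. We must choose the right multiple of $\Phi_1'(u_j)u_j$, possibly combined with the Pohozaev-type scaling derivative $\frac{d}{dt}\Phi_1((u_j)_t)|_{t=1}$, so that the resulting combination keeps the weighted and critical contributions of a fixed sign. If the exponents $\kappa$ for $p^*$ and $q^{**}$ sit on opposite sides of $s$, the fallback is to work with the two identities (Nehari and scaling) simultaneously rather than with the AR quotient alone.
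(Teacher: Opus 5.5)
Your Steps 1--2 and the three facts in Step 3 are correct. In Step 2 one finds $a=2-\sigma/r_\sigma\in(0,1)$, and the two Hölder constraints are equivalent to \eqref{sigma_and_rsigma}. For $\lambda\le0$, the normalized AR inequality already contradicts $I_s(v_j)=1$.

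The proposal, however, stops exactly where the proof has to happen. You never decide whether the critical exponents $\kappa$ exceed $s$. The closing step you sketch, eliminating the critical terms between the normalized energy and Nehari relations, cannot succeed while the weighted term is present. Any linear combination of the two relations has the form $g(\tfrac1q)\|\Delta v_j\|_q^q+g(\tfrac1p)\|\nabla v_j\|_p^p=\lambda\, g(\tfrac1{r_\sigma})\int_{\R^N}|v_j|^{r_\sigma}|x|^{-\sigma}dx+\dots+o(1)$ with $g$ affine. Since $1/r_\sigma$ lies strictly between $1/q$ and $1/p$, positivity of $g$ at $1/q$ and $1/p$ forces $g(1/r_\sigma)>0$. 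So for $\lambda>0$ the weighted term, which by your Fact~2 stays bounded below, always lands on the wrong side.

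This obstruction is structural. Without the critical terms and with $\lambda=\lambda_k$, the scaled eigenfunctions $(e)_{t_j}$, $t_j\to\infty$, form an unbounded $(PS)_0$ sequence. Any proof must therefore use that the critical integrals scale faster than $s$. The Pohozaev-type fallback is not available either: the $(PS)$ condition gives no control of $\Phi_1'(u_j)$ on $\frac{d}{dt}(u_j)_t|_{t=1}$, which involves $x\cdot\nabla u_j$ and need not be bounded in $\mathcal W$.

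The missing ingredient is the computation carried out in the paper. With $\beta_\ell$ as in \eqref{defbetal} (your $\kappa$),
\[
\beta_\ell-s=\frac{(2q-p)\ell-pq}{|p-q|},\qquad
(2q-p)p^*-pq=\frac{p\bigl(N(q-p)+pq\bigr)}{N-p},\qquad
(2q-p)q^{**}-pq=\frac{2q\bigl(N(q-p)+pq\bigr)}{N-2q}.
\]
Here $N(q-p)+pq>0$ is automatic when $p<q$ and equivalent to $p<q^*$ when $q<p$. So both exponents exceed $s$, and your Fact~1 gives $\|v_j\|_{q^{**}}\to0$ and $\|v_j\|_{p^*}\to0$. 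Hence the weak limit $v$ is zero, contradicting Fact~3.

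With this single computation your route closes at once. It differs from the paper's only at the end. The paper interpolates the weighted integral between $L^{p^*}$ and a nearby space $L^{r_1}_{\sigma_1}$ to show it tends to zero. It then subtracts the Nehari relation from $p^*$ times the energy, using $1/q^{**}<1/p^*$ and $q<p^*$, to contradict $I_s(\tilde u_j)=1$. You would instead use the compact embedding $\mathcal W\hookrightarrow L^{r_\sigma}_\sigma(\R^N)$ through Fact~3.

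As submitted, though, the lemma is not proved: the decisive inequality is deferred, and the step you call the main obstacle targets the wrong mechanism.
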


\begin{proof}
Let $\{u_j\}_{j\in\N}\subset \mathcal W$ be a $(PS)_c$ sequence for $\Phi_1$. Then
\begin{eqnarray}\label{ps1}
\int_{\R^N}\seq{\frac{1}{q}|\Delta u_j|^q+\frac{1}{p}|\nabla u_j|^p-\dfrac{\lambda}{r_\sigma}\dfrac{|u_j|^{r_\sigma}}{|x|^\sigma}-\dfrac{1}{q^{**}}|u_j|^{q^{**}}-\dfrac{1}{p^*}|u_j|^{p^*}}dx=c+o(1),
\end{eqnarray}
and
\begin{eqnarray}\label{ps2}
\int_{\R^N}\seq{|\Delta u_j|^q+|\nabla u_j|^p-\lambda\frac{|u_j|^{r_\sigma}}{|x|^\sigma}-|u_j|^{q^{**}}-|u_j|^{p^*}}dx=o(1)\|u_j\|.
\end{eqnarray}
Suppose by contradiction that $\|u_j\|\to+\infty$, and define
\[
t_j=I_s(u_j)^{-1/s},\quad \tilde{t}_j=t_j^{-1},\quad \w{u}_j=(u_j)_{t_j}.
\]
Since $I_s$ is coercive, $\tilde{t}_j\to+\infty$. Moreover,
\[
\w{u}_j\in\M,\quad u_j=(\w{u}_j)_{\tilde{t}_j},\quad\forall j\in\N.
\]
For $\ell>1$, set
\begin{equation}\label{defbetal}
\beta_\ell:=
\begin{cases}
-\gamma\ell+N,&1<p<q,\\
\gamma\ell-N,&1<q<p<q^*.
\end{cases}
\end{equation}
Thus, in either scaling regime,
\[
\int_{\R^N}|(\w{u}_j)_{\tilde{t}_j}|^\ell dx=\tilde{t}_j^{\beta_\ell}\int_{\R^N}|\w{u}_j|^\ell dx.
\]
Furthermore,
\[
\beta_{p^*}-s=\frac{(2q-p)p^*-pq}{|p-q|}>0,\qquad
\beta_{q^{**}}-s=\frac{(2q-p)q^{**}-pq}{|p-q|}>0.
\]
Indeed,
\[
(2q-p)p^*-pq=\frac{p\big(N(q-p)+pq\big)}{N-p},\qquad
(2q-p)q^{**}-pq=\frac{2q\big(N(q-p)+pq\big)}{N-2q},
\]
and $N(q-p)+pq>0$ if $p<q$, while, if $q<p$, this inequality is equivalent to $p<q^*$.
Using the scaling properties in \eqref{ps2}, we obtain
\begin{eqnarray}\label{nps1}
\tilde{t}_j^s\int_{\R^N}\left(|\Delta\w{u}_j|^q+|\nabla\w{u}_j|^p\right)dx
&=&\lambda\tilde{t}_j^s\int_{\R^N}\frac{|\w{u}_j|^{r_\sigma}}{|x|^\sigma}dx
+\tilde{t}_j^{\,\beta_{q^{**}}}\int_{\R^N}|\w{u}_j|^{q^{**}}dx\nonumber\\
&&+\tilde{t}_j^{\,\beta_{p^*}}\int_{\R^N}|\w{u}_j|^{p^*}dx+o(\tilde{t}_j^s).
\end{eqnarray}
Dividing \eqref{nps1} by $\tilde{t}_j^s$ gives
\begin{eqnarray}\label{nps2}
\int_{\R^N}\left(|\Delta\w{u}_j|^q+|\nabla\w{u}_j|^p\right)dx
&=&\lambda\int_{\R^N}\frac{|\w{u}_j|^{r_\sigma}}{|x|^\sigma}dx
+\tilde{t}_j^{\,\beta_{q^{**}}-s}\int_{\R^N}|\w{u}_j|^{q^{**}}dx\nonumber\\
&&+\tilde{t}_j^{\,\beta_{p^*}-s}\int_{\R^N}|\w{u}_j|^{p^*}dx+o(1).
\end{eqnarray}
Since $\{\w{u}_j\}_{j\in\N}\subset\M$ is bounded in $\mathcal{W}$ (and therefore in $L^{r_\sigma}_\sigma(\mathbb R^N)$), we get
\[
\int_{\R^N}|\w{u}_j|^{q^{**}}dx=O\left(\tilde{t}_j^{\,s-\beta_{q^{**}}}\right),\quad
\int_{\R^N}|\w{u}_j|^{p^*}dx=O\left(\tilde{t}_j^{\,s-\beta_{p^*}}\right).
\]
Since $\beta_{q^{**}}>s$, $\beta_{p^*}>s$, and $\tilde{t}_j\to+\infty$, it follows that
\begin{eqnarray}\label{nps22}
\int_{\R^N}|\w{u}_j|^{q^{**}}dx\to0,\quad
\int_{\R^N}|\w{u}_j|^{p^*}dx\to0.
\end{eqnarray}
Now consider a point $(r_1,\sigma_1)$ in an open neighborhood of $(r_\sigma,\sigma)$ such that $\mathcal W\hookrightarrow L_{\sigma_1}^{r_1}(\R^N)$ and $(r_\sigma,\sigma)$ lies on the open line segment connecting $(p^*,0)$ to $(r_1,\sigma_1)$. Then there exists $t\in(0,1)$ such that
\[
(r_\sigma,\sigma)=t(r_1,\sigma_1)+(1-t)(p^*,0),
\]
and Hölder's inequality gives
\begin{eqnarray}\label{nps222}
\int_{\R^N}\frac{|\w{u}_j|^{r_\sigma}}{|x|^\sigma}dx
&=&\int_{\R^N}\frac{|\w{u}_j|^{tr_1+(1-t)p^*}}{|x|^{t\sigma_1+(1-t)0}}dx
=\int_{\R^N}\frac{|\w{u}_j|^{tr_1}}{|x|^{t\sigma_1}}|\w{u}_j|^{(1-t)p^*}dx\nonumber\\
&\leq&\left(\int_{\R^N}\frac{|\w{u}_j|^{r_1}}{|x|^{\sigma_1}}dx\right)^t
\left(\int_{\R^N}|\w{u}_j|^{p^*}dx\right)^{1-t}.
\end{eqnarray}
Since $\{\w{u}_j\}_{j\in\N}$ is bounded in $L_{\sigma_1}^{r_1}(\R^N)$, \eqref{nps22} and \eqref{nps222} imply
\begin{eqnarray}\label{nps2222}
\int_{\R^N}\frac{|\w{u}_j|^{r_\sigma}}{|x|^\sigma}\to0.
\end{eqnarray}
Applying the same scaling to \eqref{ps1} and dividing by $\tilde{t}_j^s$, we obtain
\begin{eqnarray}\label{nps3}
\int_{\R^N}\left(\frac{1}{q}|\Delta\w{u}_j|^q+\frac{1}{p}|\nabla\w{u}_j|^p\right)dx
&=&\frac{\lambda}{r_\sigma}\int_{\R^N}\frac{|\w{u}_j|^{r_\sigma}}{|x|^\sigma}dx
+\frac{\tilde{t}_j^{\,\beta_{q^{**}}-s}}{q^{**}}\int_{\R^N}|\w{u}_j|^{q^{**}}dx\nonumber\\
&&+\frac{\tilde{t}_j^{\,\beta_{p^*}-s}}{p^*}\int_{\R^N}|\w{u}_j|^{p^*}dx+o(1)\nonumber\\
&\leq&\frac{\lambda}{r_\sigma}\int_{\R^N}\frac{|\w{u}_j|^{r_\sigma}}{|x|^\sigma}dx
+\frac{\tilde{t}_j^{\,\beta_{q^{**}}-s}}{p^*}\int_{\R^N}|\w{u}_j|^{q^{**}}dx\nonumber\\
&&+\frac{\tilde{t}_j^{\,\beta_{p^*}-s}}{p^*}\int_{\R^N}|\w{u}_j|^{p^*}dx+o(1),
\end{eqnarray}
where we used $p^*<q^{**}$, which follows from $p<q^*$. Multiplying \eqref{nps3} by $p^*$ yields
\begin{eqnarray}\label{nps4}
\int_{\R^N}\left(\frac{p^*}{q}|\Delta\w{u}_j|^q+\frac{p^*}{p}|\nabla\w{u}_j|^p\right)dx
&\leq&\frac{\lambda p^*}{r_\sigma}\int_{\R^N}\frac{|\w{u}_j|^{r_\sigma}}{|x|^\sigma}dx
+\tilde{t}_j^{\,\beta_{q^{**}}-s}\int_{\R^N}|\w{u}_j|^{q^{**}}dx\nonumber\\
&&+\tilde{t}_j^{\,\beta_{p^*}-s}\int_{\R^N}|\w{u}_j|^{p^*}dx+o(1).
\end{eqnarray}
Subtracting \eqref{nps2} from \eqref{nps4}, we obtain
\begin{eqnarray}\label{nps5}
\int_{\R^N}\left(\left(\frac{p^*}{q}-1\right)|\Delta\w{u}_j|^q+\left(\frac{p^*}{p}-1\right)|\nabla\w{u}_j|^p\right)dx
\leq\lambda\left|\frac{p^*}{r_\sigma}-1\right|\int_{\R^N}\frac{|\w{u}_j|^{r_\sigma}}{|x|^\sigma}dx+o(1).
\end{eqnarray}
In both regimes considered here, we have $q<p^*$: this is assumed when $p<q<p^*$ and follows from $q<p<p^*$ when $q<p<q^*$. Therefore,
\[
\frac{p^*}{q}-1>0,\qquad \frac{p^*}{p}-1>0.
\]
Combining \eqref{nps2222} and \eqref{nps5}, we conclude that
\[
\left(\frac{p^*}{q}-1\right)\int_{\R^N}|\Delta\w{u}_j|^qdx+
\left(\frac{p^*}{p}-1\right)\int_{\R^N}|\nabla\w{u}_j|^pdx\leq o(1),
\]
which contradicts $\w{u}_j\in\M$. Hence $\{u_j\}_{j\in\N}$ is bounded.
\end{proof}

For the next Lemma, let us recall the definition of $S_{1,p}$ and $S_{2,q}$ in \eqref{S1pS2q}. The following characterization of the threshold \(c^*\) is inspired by \cite{MR4846130} (see also \cite{MR4349776}). Let $\mathcal{X}$ be the set of all nonzero points $(a,b,c,d)\in\R^4$ such that 
\begin{equation}\label{defXfull}
a,b,c,d\geq0,\quad a+b=c+d,\quad c\leq S_{2,q}^{-q^{**}/q}a^{q^{**}/q},\quad d\leq S_{1,p}^{-p^*/p}b^{p^{*}/p}.
\end{equation}
Let $h$ be defined as
\begin{equation}\label{defhc*}
h(a,b,c,d)=\frac{1}{q}a+\frac{1}{p}b-\frac{1}{q^{**}}c-\frac{1}{p^*}d.
\end{equation}
Finally, define
\begin{eqnarray}\label{c-star}
    c^*=\inf h(\mathcal{X}).
\end{eqnarray}

\begin{lemma}
  Assuming $1<p<q< p^*$ or $1<q<p<q^*$  we have that $c^*>0$.
\end{lemma}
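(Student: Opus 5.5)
We must show that $h$ is bounded below by a positive constant on $\mathcal X$. The plan is to use the constraint $a+b=c+d$ to rewrite $h$, and then combine it with the two Sobolev-type inequalities in \eqref{defXfull}, which force any nonzero point of $\mathcal X$ to be uniformly far from the origin.

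\emph{Step 1 (positivity of $h$ with a quantitative bound).} Set $m:=\min\{q,p\}$ and $M:=\max\{q^{**},p^*\}$. In both regimes $m<M$: if $p<q$ then $q<p^*$, and if $q<p$ then $p<q^*<q^{**}$. Using $c+d=a+b$ we get
\[
h(a,b,c,d)\ge \frac1m\,(a+b)\cdot\min\Big\{\tfrac{m}{q},\tfrac{m}{p}\Big\}\cdots
\]
More cleanly, I will work with the sharper bound
\[
h\ge \Big(\frac1q-\frac1{q^{**}}\Big)c+\Big(\frac1p-\frac1{p^*}\Big)d+\frac1q(a-c)+\frac1p(b-d),
\]
which leaves cross terms to control. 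It is simpler to argue as follows. We have $\frac1q a+\frac1p b\ge \frac1{\max\{p,q\}}(a+b)$ and $\frac1{q^{**}}c+\frac1{p^*}d\le \frac1{\min\{q^{**},p^*\}}(c+d)$. Then $\min\{q^{**},p^*\}=p^*>\max\{p,q\}$, since $p^*>p$ and $p^*>q$ in both regimes (this was already noted in the proof of Lemma~\ref{lemma ps1}). Hence
\[
h\ge \Big(\frac1{\max\{p,q\}}-\frac1{p^*}\Big)(a+b)=:\kappa\,(a+b),\qquad \kappa>0.
\]

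\emph{Step 2 (lower bound on $a+b$).} Take a nonzero point of $\mathcal X$. Then $a+b=c+d>0$, because if $a+b=0$ then $a=b=c=d=0$. Let $t:=a+b$. Then
\[
t=c+d\le S_{2,q}^{-q^{**}/q}t^{q^{**}/q}+S_{1,p}^{-p^*/p}t^{p^*/p}.
\]
Both exponents $q^{**}/q$ and $p^*/p$ exceed $1$. Dividing by $t$ gives
\[
1\le S_{2,q}^{-q^{**}/q}t^{q^{**}/q-1}+S_{1,p}^{-p^*/p}t^{p^*/p-1}.
\]
The right-hand side is continuous, increasing in $t$, and tends to $0$ as $t\to0$. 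Therefore $t\ge t_0>0$, where $t_0$ depends only on $N,p,q$ and is the unique positive root of equality.

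\emph{Conclusion.} Combining the two steps, $h\ge\kappa t_0>0$ on $\mathcal X$, so $c^*\ge \kappa t_0>0$.

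The only point needing care is the inequality $p^*>\max\{p,q\}$. This is exactly where the hypotheses $q<p^*$ (in the case $p<q$) and $p<q^*$ (in the case $q<p$) are used, and the argument fails when $q>p^*$. This explains why the $p^*$ term is dropped in Theorem~\ref{intro:thm:p-less-q-large}.
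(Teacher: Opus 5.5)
Your proof is correct and follows essentially the paper's argument: the two Sobolev constraints force $a+b$ away from zero, and the bound $h\ge\bigl(\frac1{\max\{p,q\}}-\frac1{p^*}\bigr)(a+b)$ is exactly the paper's $\min\bigl\{\frac1q-\frac1{p^*},\frac1p-\frac1{p^*}\bigr\}(a+b)$. One correction to the justification: the hypothesis $p<q^*$ is needed for $\min\{q^{**},p^*\}=p^*$, i.e.\ $p^*<q^{**}$, which you assert without proof. It is not needed for $p^*>\max\{p,q\}$, which is automatic when $q<p$. State that step explicitly, and delete the abandoned drafts at the start of Step 1.
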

\begin{proof}

    Let $(a,b,c,d)\in\mathcal{X}$. 
     Since $0<a+b=c+d$, $c\leq S_{2,q}^{-q^{**}/q}a^{q^{**}/q}$ and $d\leq S_{1,p}^{-p^*/p}b^{p^{*}/p}$, one gets
    \begin{eqnarray*}
        a+b&\leq& S_{2,q}^{-q^{**}/q}a^{q^{**}/q}+S_{1,p}^{-p^*/p}b^{p^{*}/p}
        \\
        &\leq& \max\set{S_{2,q}^{-q^{**}/q},S_{1,p}^{-p^*/p}}\left[(a+b)^{q^{**}/q}+(a+b)^{p^*/p}\right]
    \end{eqnarray*}
    which gives
    \begin{eqnarray*}
        \dfrac{1}{\max\set{S_{2,q}^{-q^{**}/q},S_{1,p}^{-p^*/p}}}\leq (a+b)^{(q^{**}/q)-1}+(a+b)^{(p^*/p)-1},
    \end{eqnarray*}
    and this shows that there exists a  constant $c_1>0$ such that 
    \begin{equation}\label{inf a+b}
    a+b>c_1,\quad\forall (a,b)\in \pi_{a,b}(\mathcal{X}),
    \end{equation}
    where $\pi_{a,b}$ is the projection onto the first two coordinates.
Now, writing  $d=a+b-c$, we get
    \begin{eqnarray*}
         h(a,b,c,d)&=&\seq{\frac{1}{q}-\frac{1}{p^*}}a+\seq{\frac{1}{p}-\frac{1}{p^*}}b+\seq{\frac{1}{p^*}-\frac{1}{q^{**}}}c.
    \end{eqnarray*}
In either case $1<p<q<p^*$ or $1<q<p<q^*$ we get $p<q^*$ and $q<p^*$.  Then
       \begin{eqnarray*}
         h(a,b,c,d) &\geq&\min\set{\seq{\dfrac{1}{q}-\dfrac{1}{p^*}},\seq{\frac{1}{p}-\frac{1}{p^*}}}(a+b).
    \end{eqnarray*}
    This together with \eqref{inf a+b} give us a constant $c_2>0$ such that
    \begin{eqnarray*}
        h(a,b,c,d)>c_2,\quad\forall (a,b,c,d)\in\mathcal{X}.
    \end{eqnarray*}
    This concludes the proof.
\end{proof}

We are now ready to prove the $(PS)_c$ condition for $\Phi_1$.

\begin{lemma}\label{lemma ps full}
The functional $\Phi_1$ defined in
\eqref{energy-tau} satisfies the $(PS)_c$ condition for every $c<c^*$, where $c^*$ is defined in \eqref{c-star}.
\end{lemma}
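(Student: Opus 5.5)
Let $\{u_j\}$ be a $(PS)_c$ sequence for $\Phi_1$ with $c<c^*$. By Lemma~\ref{lemma ps1} it is bounded in $\mathcal W$. Passing to a subsequence, $u_j\rightharpoonup u$ in $\mathcal W$ and Corollary~\ref{s2} applies, giving a.e. convergence of $u_j,\nabla u_j,\Delta u_j$. From this we deduce that $u$ is a critical point of $\Phi_1$: the a.e. convergence and boundedness give weak convergence of the nonlinear fluxes $|\Delta u_j|^{q-2}\Delta u_j$, $|\nabla u_j|^{p-2}\nabla u_j$, $|u_j|^{q^{**}-2}u_j$, $|u_j|^{p^*-2}u_j$ in the dual Lebesgue spaces. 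The weighted term converges by Corollary~\ref{cor compact emb}.

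Next set $v_j=u_j-u$. By Brezis--Lieb (valid thanks to the a.e. convergences) we split
\[
\int|\Delta u_j|^q=\int|\Delta u|^q+\int|\Delta v_j|^q+o(1),
\]
and similarly for $|\nabla u_j|^p$, $|u_j|^{q^{**}}$ and $|u_j|^{p^*}$. The weighted term converges strongly. Subtracting $\Phi_1'(u)u=0$ from $\Phi_1'(u_j)u_j=o(1)$ and using the splittings gives
\[
a_j+b_j=c_j+d_j+o(1),
\]
where
\[
a_j=\|\Delta v_j\|_q^q,\quad b_j=\|\nabla v_j\|_p^p,\quad c_j=\|v_j\|_{q^{**}}^{q^{**}},\quad d_j=\|v_j\|_{p^*}^{p^*}.
\]
Likewise, $\Phi_1(u_j)=\Phi_1(u)+h(a_j,b_j,c_j,d_j)+o(1)$. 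We also have $\Phi_1(u)\ge0$, since $\Phi_1(u)=\Phi_1(u)-\frac1{r_\sigma}\Phi_1'(u)u$ would need care. Instead we use the Pohozaev/Nehari combination available for critical points: from $(H_{12})$-type reasoning, the Pohozaev identity (Proposition~\ref{prop:pohozaev-identity}) together with $\Phi_1'(u)u=0$ gives $\Phi_1(u)\ge0$. If this is inconvenient, we can alternatively write $\Phi_1(u)=\Phi_1(u)-\frac1{p^*}\Phi_1'(u)u$. This expression is nonnegative up to the weighted term $\lambda(\frac1{p^*}-\frac1{r_\sigma})\int|u|^{r_\sigma}|x|^{-\sigma}$. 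Since $r_\sigma<p^*$ (because $r_\sigma<q^{**}_\sigma$ and the point lies between the Sobolev lines), that term is nonnegative when $\lambda>0$.

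Passing to a subsequence, $(a_j,b_j,c_j,d_j)\to(a,b,c,d)$. The Sobolev inequalities \eqref{S1pS2q} give $c\le S_{2,q}^{-q^{**}/q}a^{q^{**}/q}$ and $d\le S_{1,p}^{-p^*/p}b^{p^*/p}$, and we also have $a+b=c+d$. If $(a,b,c,d)\neq0$, it lies in $\mathcal X$, and then
\[
c=\lim\Phi_1(u_j)\ge\Phi_1(u)+h(a,b,c,d)\ge c^*,
\]
a contradiction. Hence $a=b=0$, so $v_j\to0$ in $\mathcal W$, i.e. $u_j\to u$ strongly. Alternatively, once $c_j,d_j\to0$, the conclusion also follows from Lemma~\ref{S+ condition}, since $A_s(u_j)(u_j-u)\to0$.

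The main obstacle is establishing $\Phi_1(u)\ge0$ for the weak limit, given the sign-indefinite weighted term when $\lambda$ may be large. I would first try the combination $\Phi_1(u)-\frac1{p^*}\Phi_1'(u)u$, checking $r_\sigma<p^*$ from \eqref{sigma_and_rsigma}. Since $\lambda>0$ in the intended range, this yields nonnegativity because $q,p<p^*<q^{**}$. A secondary technical point is justifying Brezis--Lieb for $\Delta v_j$, which needs a.e. convergence of $\Delta u_j$. That holds only off the finitely many concentration points, which is enough since they form a null set by Corollary~\ref{s2}.
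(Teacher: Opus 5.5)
Your overall structure matches the paper's proof: boundedness from Lemma~\ref{lemma ps1}, the weak limit $u$ is a critical point, the Brezis--Lieb splitting gives $(a,b,c_0,d)\in\mathcal X$, and $c=\Phi_1(u)+h(a,b,c_0,d)$. The gap is at the step you yourself flag, $\Phi_1(u)\ge 0$, and the argument you commit to there is wrong.

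\textbf{The Nehari route fails.} In $\Phi_1(u)-\frac1{p^*}\Phi_1'(u)u$ the weighted contribution is $\lambda\bigl(\frac1{p^*}-\frac1{r_\sigma}\bigr)\int_{\R^N}|u|^{r_\sigma}|x|^{-\sigma}\,dx$. Since $r_\sigma<p^*$, its coefficient is negative, so the term is nonpositive for $\lambda>0$. That is exactly the regime $\lambda\in(\lambda_k-\delta_k,\lambda_k)$ the lemma must serve; the sign only works for $\lambda\le 0$. Your reason for $r_\sigma<p^*$ is also off: it does not follow from $r_\sigma<q^{**}_\sigma$. The correct reason is that $r_\sigma$ is affine in $\sigma$ with $r_p=p$ and $r_{2q}=q$, so it lies strictly between $p$ and $q$.

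This also shows no multiple of the Nehari identity can work. In $\Phi_1(u)-\theta\Phi_1'(u)u$ the weighted coefficient is $-\lambda\bigl(\frac1{r_\sigma}-\theta\bigr)$, which for $\lambda>0$ forces $\theta\ge 1/r_\sigma$. Since $r_\sigma<\max\{p,q\}$, one of $\frac1q-\theta$, $\frac1p-\theta$ is then negative. So the sign of the limit energy cannot be obtained from Nehari alone.

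\textbf{What is actually needed.} You mention a ``Pohozaev/Nehari combination'' in passing, and that is the right tool, but you only assert it, and without the prerequisite that makes it available.

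First, $f(x,t)=\lambda|t|^{r_\sigma-2}t|x|^{-\sigma}+|t|^{p^*-2}t+|t|^{q^{**}-2}t$ satisfies \eqref{condf_0} on $\Omega=\R^N\setminus\{0\}$. So Theorem~\ref{thm:intro-regularity} gives the annular regularity \eqref{eq:annular-regularity-pohozaev} for the weak limit $u$. Only then does Proposition~\ref{prop:pohozaev-identity}, with $\tau_1=\tau_2=1$, apply.

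Next, write $A,B,W,C,D$ for $\int|\Delta u|^q$, $\int|\nabla u|^p$, $\int|u|^{r_\sigma}|x|^{-\sigma}$, $\int|u|^{q^{**}}$, $\int|u|^{p^*}$. Use $N/q^{**}=(N-2q)/q$, $N/p^*=(N-p)/p$, and the scaling relation $r_\sigma(2q-p)=pq+\sigma(q-p)$. This relation makes the coefficient vectors of the two identities proportional; the Pohozaev vector is nonzero because $p\neq q^*$. Eliminating $\lambda W$ then gives
\[
\Bigl(\frac1q-\frac1{r_\sigma}\Bigr)(A-C)+\Bigl(\frac1p-\frac1{r_\sigma}\Bigr)(B-D)=0,
\]
hence $\Phi_1(u)=\frac2N C+\frac1N D\ge 0$ for every $\lambda\in\R$.

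This is precisely where the scaling-determined choice of $r_\sigma$ enters. With this step in place, the rest of your argument goes through, including your alternative ending via Lemma~\ref{S+ condition}.
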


\begin{proof}
Let $\{u_j\}_{j\in\N}\subset \mathcal W$ be a $(PS)_c$ sequence for $\Phi_1$.  We 
know from Lemma \ref{lemma ps1} that
$\{u_j\}_{j\in\N}$ is bounded in $\mathcal W$.  Passing to a subsequence, there exists
$u\in \mathcal W$ such that $u_j\rightharpoonup u$ in $\mathcal W.$
By Corollary \ref{cor compact emb}, after passing to a further subsequence,
\begin{equation}\label{weighted-convergence-full-PS}
 u_j\to u\quad\text{in }L^{r_\sigma}_\sigma(\R^N).
\end{equation}
Corollary \ref{s2} allows us to choose this
subsequence so that
\begin{equation}\label{pointwise-convergences-full-PS}
\begin{aligned}
 u_j&\to u &&\text{a.e. in }\R^N,\\
 \nabla u_j&\to\nabla u &&\text{a.e. in }\R^N,\\
 \Delta u_j&\to\Delta u &&\text{a.e. in }\R^N.
\end{aligned}
\end{equation}
We now show that $u$ is a critical point of $\Phi_1$. Let
$\phi\in C_c^\infty(\R^N)$. Since $\{\Delta u_j\}$ is bounded in
$L^q(\R^N)$ and $\Delta u_j\to\Delta u$ a.e. in $\R^N$, we have
\[
|\Delta u_j|^{q-2}\Delta u_j
\rightharpoonup
|\Delta u|^{q-2}\Delta u
\qquad\text{in }L^{q'}(\R^N).
\]
Likewise,
\[
|\nabla u_j|^{p-2}\nabla u_j
\rightharpoonup
|\nabla u|^{p-2}\nabla u
\qquad\text{in }(L^{p'}(\R^N))^N.
\]
Moreover, by \eqref{pointwise-convergences-full-PS} and the boundedness of
$\{u_j\}$ in $L^{p^*}(\R^N)\cap L^{q^{**}}(\R^N)$,
\[
|u_j|^{p^*-2}u_j
\rightharpoonup
|u|^{p^*-2}u
\quad\text{in }L^{(p^*)'}(\R^N)
\quad\text{and}\quad
|u_j|^{q^{**}-2}u_j
\rightharpoonup
|u|^{q^{**}-2}u
\quad\text{in }L^{(q^{**})'}(\R^N).
\]
Finally, \eqref{weighted-convergence-full-PS} gives
\[
\int_{\R^N}
\frac{|u_j|^{r_\sigma-2}u_j\phi}{|x|^\sigma}\,dx
\rightarrow
\int_{\R^N}
\frac{|u|^{r_\sigma-2}u\phi}{|x|^\sigma}\,dx.
\]
Passing to the limit in $\langle E^{'}_1(u_j),\phi\rangle=o(1),$ we obtain
\[
\langle E^{'}_1(u),\phi\rangle=0
\qquad\text{for every }\phi\in C_c^\infty(\R^N).
\]
Since $C_c^\infty(\R^N)$ is dense in $\mathcal W$, $u$ is a weak solution of \eqref{prob1}.

Set $ v_j=u_j-u.$ Since $\{v_j\}_{j\in\N}$ is bounded in $\mathcal W$, it is bounded in
$D^{2,q}(\R^N)$, $D^{1,p}(\R^N)$, $L^{q^{**}}(\R^N)$, and
$L^{p^*}(\R^N)$.  In view of the three almost-everywhere convergences in
\eqref{pointwise-convergences-full-PS}, the Brezis--Lieb lemma gives
\begin{align}
 \int_{\R^N}|\Delta u_j|^q\,dx
 &=\int_{\R^N}|\Delta u|^q\,dx
  +\int_{\R^N}|\Delta v_j|^q\,dx+o(1),
 \label{BL-Delta-full}\\
 \int_{\R^N}|\nabla u_j|^p\,dx
 &=\int_{\R^N}|\nabla u|^p\,dx
  +\int_{\R^N}|\nabla v_j|^p\,dx+o(1),
 \label{BL-gradient-full}\\
 \int_{\R^N}|u_j|^{q^{**}}\,dx
 &=\int_{\R^N}|u|^{q^{**}}\,dx
  +\int_{\R^N}|v_j|^{q^{**}}\,dx+o(1),
 \label{BL-q-critical-full}\\
 \int_{\R^N}|u_j|^{p^*}\,dx
 &=\int_{\R^N}|u|^{p^*}\,dx
  +\int_{\R^N}|v_j|^{p^*}\,dx+o(1).
 \label{BL-p-critical-full}
\end{align}
Moreover, \eqref{weighted-convergence-full-PS} implies
\begin{equation}\label{weighted-energy-convergence-full}
 \int_{\R^N}\frac{|u_j|^{r_\sigma}}{|x|^\sigma}\,dx
 =\int_{\R^N}\frac{|u|^{r_\sigma}}{|x|^\sigma}\,dx+o(1).
\end{equation}
The four-dimensional sequence
\[
 \left(
 \int_{\R^N}|\Delta v_j|^q\,dx,
 \int_{\R^N}|\nabla v_j|^p\,dx,
 \int_{\R^N}|v_j|^{q^{**}}\,dx,
 \int_{\R^N}|v_j|^{p^*}\,dx
 \right)
\]
is bounded in $[0,\infty)^4$.  Thus, after passing to one further common
subsequence, there are $a,b,c_0,d\geq0$ such that
\begin{align}
 \int_{\R^N}|\Delta v_j|^q\,dx&\to a,&
 \int_{\R^N}|\nabla v_j|^p\,dx&\to b,
 \label{residual-limits-full-1}\\
 \int_{\R^N}|v_j|^{q^{**}}\,dx&\to c_0,&
 \int_{\R^N}|v_j|^{p^*}\,dx&\to d.
 \label{residual-limits-full-2}
\end{align}
Since $E^{'}_1(u_j)u_j=o(1)$, $E^{'}_1(u)u=0$, and the weighted term converges by
\eqref{weighted-energy-convergence-full}, subtracting these two identities
and using \eqref{BL-Delta-full}--\eqref{BL-p-critical-full} yields
\begin{equation}\label{residual-balance-full-PS}
 a+b=c_0+d.
\end{equation}
The Sobolev inequalities and \eqref{residual-limits-full-1}--
\eqref{residual-limits-full-2} give
\begin{equation}\label{residual-Sobolev-full-PS}
 c_0\leq S_{2,q}^{-q^{**}/q}a^{q^{**}/q},
 \qquad
 d\leq S_{1,p}^{-p^*/p}b^{p^*/p}.
\end{equation}
The energy identities and the same splitting relations give
\begin{equation}\label{energy-splitting-full-PS}
 c=\Phi_1(u)+h(a,b,c_0,d),
\end{equation}
where $h$ is given in \eqref{defhc*}. We next show that the energy of the weak limit is nonnegative.  Testing
$\Phi_1'(u)=0$ with $u$ gives
\begin{equation}\label{Nehari-limit-full-PS}
 \int_{\R^N}|\Delta u|^q\,dx
 +\int_{\R^N}|\nabla u|^p\,dx
 =\lambda\int_{\R^N}\frac{|u|^{r_\sigma}}{|x|^\sigma}\,dx
 +\int_{\R^N}|u|^{q^{**}}\,dx
 +\int_{\R^N}|u|^{p^*}\,dx.
\end{equation}
On the other hand, we notice that $u$ has the regularity required to invoke Proposition \ref{prop:pohozaev-identity}, with $\tau_1=\tau_2=1$. Indeed, $u$ is a weak solution of \eqref{generalfproblem}, with $f(x,u)$ as in the right-hand side of \eqref{prob1}, which satisfies $\eqref{condf_0}$ with $\Omega=\mathbb R^N\backslash\{0\}$. This implies that we can invoke Theorem~\ref{thm:intro-regularity} and the Pohozaev identity yields
\begin{align}
 &\frac{N-2q}{q}\int_{\R^N}|\Delta u|^q\,dx
 +\frac{N-p}{p}\int_{\R^N}|\nabla u|^p\,dx
 \notag\\&
 \qquad=\lambda\frac{N-\sigma}{r_\sigma}
   \int_{\R^N}\frac{|u|^{r_\sigma}}{|x|^\sigma}\,dx\quad+\frac{N}{q^{**}}\int_{\R^N}|u|^{q^{**}}\,dx
 +\frac{N}{p^*}\int_{\R^N}|u|^{p^*}\,dx.
 \label{Pohozaev-limit-full-PS}
\end{align}
Because
\[
 \frac{N}{q^{**}}=\frac{N-2q}{q},
 \qquad
 \frac{N}{p^*}=\frac{N-p}{p},
\]
subtracting the critical terms from the corresponding principal terms in
\eqref{Nehari-limit-full-PS}--\eqref{Pohozaev-limit-full-PS}, and using $ r_\sigma(2q-p)=pq+\sigma(q-p),$ gives
\begin{align}
 &\left(\frac1q-\frac1{r_\sigma}\right)
 \left(\int_{\R^N}|\Delta u|^q\,dx
       -\int_{\R^N}|u|^{q^{**}}\,dx\right)
 \notag\\
 &\qquad+
 \left(\frac1p-\frac1{r_\sigma}\right)
 \left(\int_{\R^N}|\nabla u|^p\,dx
       -\int_{\R^N}|u|^{p^*}\,dx\right)=0.
 \label{scaled-cancellation-full-PS}
\end{align}
Combining \eqref{Nehari-limit-full-PS} and
\eqref{scaled-cancellation-full-PS} with the definition of $\Phi_1$, we obtain
\begin{equation}\label{energy-limit-full-PS}
 \Phi_1(u)=\frac2N\int_{\R^N}|u|^{q^{**}}\,dx
      +\frac1N\int_{\R^N}|u|^{p^*}\,dx\geq0.
\end{equation}

\noindent Suppose that $(a,b,c_0,d)\neq(0,0,0,0)$.  Then
\eqref{residual-balance-full-PS} and \eqref{residual-Sobolev-full-PS}
show that $(a,b,c_0,d)\in\mathcal X$, where $\mathcal X$ is defined in \eqref{defXfull}.  Consequently, $h(a,b,c_0,d)\geq c^*.$ Together with \eqref{energy-splitting-full-PS} and
\eqref{energy-limit-full-PS}, this gives $c\geq c^*$, a contradiction.
Therefore  $a=b=c_0=d=0.$ In particular, \eqref{residual-limits-full-1} gives
\[
 \int_{\R^N}|\Delta(u_j-u)|^q\,dx\to0,
 \qquad
 \int_{\R^N}|\nabla(u_j-u)|^p\,dx\to0.
\]
Hence $u_j\to u$ strongly in $\mathcal W$, proving the $(PS)_c$ condition.
\end{proof}

We are now ready to conclude the proof of Theorem \ref{intro:thm:p-less-q}.

\subsection*{Proof of Theorem \ref{intro:thm:p-less-q}}

Through this section, for $\ell>1$ we consider $\beta_\ell$ as in \eqref{defbetal}, so that
\[
\int_{\R^N}|u_t|^\ell dx=t^{\beta_\ell}\int_{\R^N}|u|^\ell dx.
\]
As shown in the proof of Lemma \ref{lemma ps1}, $\beta_{q^{**}}>s$, and $\beta_{p^*}>s.$ Moreover, a direct computation gives
\[
\beta_{q^{**}}=\frac{N}{N-2q}s.
\]

\begin{proof}[Proof of Theorem \ref{intro:thm:p-less-q}]
Fix $k,m\geq1$ such that
\[
\lambda_k=\cdots=\lambda_{k+m-1}<\lambda_{k+m}.
\]
We will employ Theorem \ref{new m1} with $\Phi:=\Phi_1$ and $c^*$ given by \eqref{c-star}. Let $n$ be large so that
\[
\frac1n\in(0,\lambda_{k+m}-\lambda_{k+m-1}).
\]
Then $i(\M\backslash\w{\Psi}_{\lambda_{k+m-1}+1/n})=k+m-1$ by Theorem \ref{Theorem 301} \ref{301-3}. Since $\M\backslash\w{\Psi}_{\lambda_{k+m-1}+1/n}$ is an open symmetric set of index $k+m-1$, it contains a compact symmetric subset $C$ with $i(C)=k+m-1$ (see the proof of Proposition 3.1 in Degiovanni and Lancelotti \cite{MR2371112}). We apply Theorem \ref{new m1} with
\[
A_0=A_0(n)=C,\qquad B_0=\w{\Psi}_{\lambda_k}.
\]
If $\lambda_1=\cdots=\lambda_k$, then
$B_0=\widetilde{\Psi}_{\lambda_k}=\M$. Hence
$M_s\setminus B_0=\varnothing$, and therefore
\[
i(M_s\setminus B_0)=0\leq k-1.
\] If $\lambda_{l-1}<\lambda_l=\lambda_k=\cdots=\lambda_{k+m-1}$ for some $2\leq l\leq k$, then
\[
i(\M\backslash B_0)=i(\M\backslash\w{\Psi}_{\lambda_l})=l-1\leq k-1
\]
by Theorem \ref{Theorem 301} \ref{301-3}.
Let $R>\rho>0$ and set
\begin{eqnarray*}
X&=&\set{u_t:u\in A_0,\ 0\leq t\leq R},\quad
A=\set{u_R:u\in A_0}\quad\text{and}\quad
B=\set{u_\rho:u\in B_0}.
\end{eqnarray*}
For $u\in\M$ and $t\geq0$, we have
\begin{equation}\label{210-1}
\Phi_1(u_t)=t^s\seq{1-\frac{\lambda}{\w{\Psi}(u)}}-\frac{t^{\beta_{q^{**}}}}{q^{**}}\int_{\R^N}|u|^{q^{**}}dx-\frac{t^{\beta_{p^*}}}{p^*}\int_{\R^N}|u|^{p^*}dx.
\end{equation}
Since $\M $ is bounded and $\beta_{q^{**}},\beta_{p^*}>s$, we have
\[
\Phi_1(u_t)\geq t^s\seq{1-\frac{\lambda}{\w{\Psi}(u)}+o(1)}\quad\text{as }t\to0
\]
uniformly on $B_0$. Since $\w{\Psi}(u)\geq\lambda_k$ for $u\in B_0$ and $\lambda<\lambda_k$, there exists $\rho>0$ sufficiently small such that
\[
\inf_{u\in B}\Phi_1(u)>0.
\]

\noindent For every $u\in A_0\subset\M \backslash\w{\Psi}_{\lambda_{k+m-1}+1/n}$, we have
\[
\frac{1}{r_\sigma}\int_{\R^N}\frac{|u|^{r_\sigma}}{|x|^\sigma}dx>\frac{1}{\lambda_{k+m-1}+1/n}>\frac{1}{\lambda_{k+m}},
\]
and hence
\begin{equation}\label{210-11}
\int_{\R^N}\frac{|u|^{r_\sigma}}{|x|^\sigma}dx>\frac{r_\sigma}{\lambda_{k+m}}.
\end{equation}
We claim that there exists $C>0$, independent of $n$, such that
\begin{equation}\label{uniform-qcritical-lower-bound}
\int_{\R^N}|u|^{q^{**}}dx\geq C\qquad\text{for every }u\in A_0(n).
\end{equation}
Suppose otherwise. Then, after passing to a sequence  $n\to\infty$, there exist $u_n\in A_0(n)\subset\M $ such that $u_n\to 0$ in $L^{q^{**}}(\R^N)$.
Since $\M $ is bounded in $\mathcal W$, after passing to a subsequence we have $u_n\rightharpoonup u$ in $\mathcal W$. The preceding convergence implies $u=0$. By the compact embedding $\mathcal W\hookrightarrow L^{r_\sigma}_\sigma(\R^N)$, we obtain
\[
\int_{\R^N}\frac{|u_n|^{r_\sigma}}{|x|^\sigma}dx\to0,
\]
contradicting \eqref{210-11}. Thus \eqref{uniform-qcritical-lower-bound} holds.
Since $\lambda_{k+m-1}=\lambda_k$, equations \eqref{210-1} and \eqref{uniform-qcritical-lower-bound} give
\[
\Phi_1(u_t)\leq t^s\seq{1-\frac{\lambda}{\lambda_k+1/n}}-c_1t^{\beta_{q^{**}}}
=t^s\seq{1-\frac{\lambda}{\lambda_k+1/n}}-c_1t^{\frac{N}{N-2q}s}
\]
for all $u\in A_0$ and some constant $c_1>0$ independent of $n$. Consequently,
\[
\sup_{u\in A}\Phi_1(u)\leq R^s-c_1R^{\frac{N}{N-2q}s}\leq0
\]
if $R$ is sufficiently large. This proves the first inequality in \eqref{new m1-2}. Moreover,
\begin{align*}
\sup_{u\in X}\Phi_1(u)
&\leq\sup_{t\geq0}\set{t^s\seq{1-\frac{\lambda}{\lambda_k+1/n}}-c_1t^{\frac{N}{N-2q}s}}\\
&\leq\frac{2q}{N}\seq{\frac{N-2q}{Nc_1}}^{\frac{N-2q}{2q}}\seq{1-\frac{\lambda}{\lambda_k+1/n}}^{\frac{N}{2q}}.
\end{align*}
It follows that
\[
\sup_{u\in X}\Phi_1(u)<c^*
\]
whenever $\lambda<\lambda_k$ is sufficiently close to $\lambda_k$ and $n$ is sufficiently large. Thus the second inequality in \eqref{new m1-2} also holds.

By Lemma \ref{lemma ps full}, the functional $\Phi_1$ satisfies the $(PS)_c$ condition for every $c\in(0,c^*)$. Therefore, Theorem \ref{new m1} gives
\[
0<c_k^*\leq\cdots\leq c_{k+m-1}^*<c^*,
\]
where each $c_j^*$ is a critical value of $\Phi_1$, and $\Phi_1$ possesses $m$ distinct pairs of associated critical points. Since all these critical values are positive, the corresponding critical points are nontrivial. Consequently, there exists $\delta_k>0$ such that, for every $\lambda\in(\lambda_k-\delta_k,\lambda_k),$ problem \eqref{prob1} possesses $m$ distinct pairs of nontrivial weak solutions at positive energy levels.
\end{proof}

 \section{One critical term}\label{sec-proof12}

 In this regime we consider problem \eqref{intro:problem-p-less-q-large}, whose
solutions are the critical points of $\Phi_0$, which is given in \eqref{energy-tau} for $\tau=0$. Here we consider $p^*<q$.

The $p^*$-critical term is absent, so we remove the fourth residual variable.
Let $\mathcal X_{\rm red}$ be the set of all nonzero triples
$(a,b,c)\in[0,\infty)^3$ satisfying
\begin{equation}\label{residual-set-reduced}
 a+b=c,\qquad
 c\leq S_{2,q}^{-q^{**}/q}a^{q^{**}/q},
\end{equation}
and set
\[
 h(a,b,c)=\frac1q a+\frac1p b-\frac1{q^{**}}c,
 \qquad
 c^*_{\rm red}=\inf_{(a,b,c)\in\mathcal X_{\rm red}}h(a,b,c).
\]

\begin{lemma}\label{c-star-reduced}
If $p^*<q$, then
\begin{equation}\label{exact-c-star-reduced}
 c^*_{\rm red}=\frac2N S_{2,q}^{N/(2q)}>0.
\end{equation}
\end{lemma}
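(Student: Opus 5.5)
We compute the infimum by eliminating $c$ through the constraint $c=a+b$. On $\mathcal X_{\rm red}$,
\[
h(a,b,c)=\Big(\frac1q-\frac1{q^{**}}\Big)a+\Big(\frac1p-\frac1{q^{**}}\Big)b
=\frac{2}{N}\,a+\Big(\frac1p-\frac1{q^{**}}\Big)b .
\]
Here we used $\frac1q-\frac1{q^{**}}=\frac{2}{N}$. Since $p<q^*<q^{**}$, the coefficient of $b$ is strictly larger than $\frac1q-\frac1{q^{**}}$, because $p<q$ in this regime: $p<p^*<q$. Hence $h\ge \frac2N(a+b)$, with equality only when $b=0$.

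\textbf{Lower bound.} The constraint gives $a\le a+b=c\le S_{2,q}^{-q^{**}/q}a^{q^{**}/q}$. The triple is nonzero, so $a>0$: if $a=0$ then $c=0$ and $b=0$. Therefore $a^{q^{**}/q-1}\ge S_{2,q}^{q^{**}/q}$. Since $\frac{q^{**}}{q}-1=\frac{2q}{N-2q}$ and $\frac{q^{**}}{q}\cdot\frac{N-2q}{2q}=\frac{N}{2q}$, this yields $a\ge S_{2,q}^{N/(2q)}$. Consequently $h\ge\frac2N a\ge \frac2N S_{2,q}^{N/(2q)}$ on $\mathcal X_{\rm red}$.

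\textbf{Attainment.} Take $b=0$ and $a=c=S_{2,q}^{N/(2q)}$. Then $c=S_{2,q}^{-q^{**}/q}a^{q^{**}/q}$ holds with equality, precisely by the exponent computation above. The triple is nonzero and lies in $\mathcal X_{\rm red}$, and it gives $h=\frac2N S_{2,q}^{N/(2q)}$. So the infimum is a minimum and equals the claimed value, which is positive because $S_{2,q}>0$.

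No step is a real obstacle. The only care needed is the sign of the $b$-coefficient, which holds since $p<q^{**}$, and the exponent arithmetic $\big(\frac{q^{**}}{q}\big)\big/\big(\frac{q^{**}}{q}-1\big)=\frac{N}{2q}$.
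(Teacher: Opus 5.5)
Your proof is correct and essentially matches the paper's: you eliminate $c$ via $a+b=c$, combine $a\le c$ with the Sobolev-type constraint to get a lower bound, and attain the infimum at $b=0$, $a=c=S_{2,q}^{N/(2q)}$. The only, harmless, difference is that you bound $a$ from below and use $h\ge\frac2N a$, which needs only $p<q^{**}$; the paper bounds $c$ from below and uses $h\ge\frac2N(a+b)=\frac2N c$, which relies on $p<q$.
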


\begin{proof}
Since $p^*<q$, we have $p<q<q^{**}$.  For every
$(a,b,c)\in\mathcal X_{\rm red}$,
\begin{align*}
 h(a,b,c)
 &=\left(\frac1q-\frac1{q^{**}}\right)a
  +\left(\frac1p-\frac1{q^{**}}\right)b
 \geq\frac2N(a+b)=\frac2N c.
\end{align*}
Furthermore, $a\leq c$ and \eqref{residual-set-reduced} imply $c\leq S_{2,q}^{-q^{**}/q}c^{q^{**}/q}.$ Since $c>0$ (if $c=0$, then $a+b=0$ and so the triple $(a,b,c)$ is trivial), it follows that
\[
 c\geq S_{2,q}^{q^{**}/(q^{**}-q)}=S_{2,q}^{N/(2q)}.
\]
Thus $h(a,b,c)\geq(2/N)S_{2,q}^{N/(2q)}$.  Equality is achieved in the
finite-dimensional constraint set at $ a=c=S_{2,q}^{N/(2q)}, b=0.$ This proves \eqref{exact-c-star-reduced}.
\end{proof}

\begin{lemma}\label{lemma ps0 bounded}
Assume $p^{*}<q$. Then every $(PS)_c$ sequence of $\Phi_0$, with $c\in\mathbb{R}$,
is bounded in $\mathcal W$.
\end{lemma}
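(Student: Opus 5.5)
The plan is to follow the scaling-contradiction argument of Lemma~\ref{lemma ps1}, with $\tau=0$. The only change is that the $p^*$-term is no longer available to absorb the weighted term, so I will use the $q^{**}$-term in its place. Since $p<p^*<q$, we are in the regime $1<p<q$, with $u_t(x)=t^{-\gamma}u(x/t)$ and $\beta_\ell=-\gamma\ell+N$.

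Let $\{u_j\}$ be a $(PS)_c$ sequence for $\Phi_0$ and write down the energy and Nehari relations (the analogues of \eqref{ps1}--\eqref{ps2} without the $p^*$-terms). Suppose by contradiction that $\|u_j\|\to\infty$, and set $t_j=I_s(u_j)^{-1/s}$, $\tilde t_j=t_j^{-1}\to\infty$ (by \ref{H10}) and $\w u_j=(u_j)_{t_j}\in\M$.

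1. Rescale and divide by $\tilde t_j^{\,s}$. The error term is harmless: by \eqref{ut bounded} and the boundedness of $\M$, $\|u_j\|\le \max\{\tilde t_j^{s/q},\tilde t_j^{s/p}\}\|\w u_j\|=o(\tilde t_j^{\,s})$.

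2. This gives the analogues of \eqref{nps2} and \eqref{nps3} with only the $q^{**}$-term. Its coefficient is $\tilde t_j^{\,\beta_{q^{**}}-s}$, and $\beta_{q^{**}}>s$ holds because $N(q-p)+pq>0$ for $p<q$ (as computed in Lemma~\ref{lemma ps1}).

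3. Boundedness of $\w u_j$ in $\mathcal W$ and in $L^{r_\sigma}_\sigma$ then forces $\tilde t_j^{\,\beta_{q^{**}}-s}\int|\w u_j|^{q^{**}}=O(1)$. In particular $\int|\w u_j|^{q^{**}}\to0$.

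The key step is to show that the weighted term also tends to zero. In Lemma~\ref{lemma ps1} this was done by interpolating toward $(p^*,0)$; here I interpolate toward $(q^{**},0)$ instead.

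1. By Corollary~\ref{cor compact emb}, $(r_\sigma,\sigma)$ lies in the open admissible region $\{(r,\eta):0<\eta<\overline\eta,\ p^*_\eta<r<q^{**}_\eta\}$ of Lemma~\ref{lem:weighted-embedding-E}.

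2. Extend the segment from $(q^{**},0)$ through $(r_\sigma,\sigma)$ slightly beyond $(r_\sigma,\sigma)$. This gives a point $(r_1,\sigma_1)$, still in that region, with $(r_\sigma,\sigma)=t(r_1,\sigma_1)+(1-t)(q^{**},0)$ for some $t\in(0,1)$.

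3. H\"older's inequality, exactly as in \eqref{nps222}, yields
\[
\int\frac{|\w u_j|^{r_\sigma}}{|x|^\sigma}\le\Big(\int\frac{|\w u_j|^{r_1}}{|x|^{\sigma_1}}\Big)^t\Big(\int|\w u_j|^{q^{**}}\Big)^{1-t}\to0 .
\]
This uses that $\w u_j$ is bounded in $L^{r_1}_{\sigma_1}$.

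Finally, multiply the rescaled energy identity by $q^{**}$ and subtract the rescaled Nehari identity. The $q^{**}$-terms cancel exactly, so no inequality is needed here, and we obtain
\[
\Big(\tfrac{q^{**}}{q}-1\Big)\int|\Delta\w u_j|^q+\Big(\tfrac{q^{**}}{p}-1\Big)\int|\nabla\w u_j|^p=\lambda\Big(\tfrac{q^{**}}{r_\sigma}-1\Big)\int\frac{|\w u_j|^{r_\sigma}}{|x|^\sigma}+o(1)=o(1).
\]
Both coefficients on the left are positive, since $p<q<q^{**}$. Hence $I_s(\w u_j)\to0$, which contradicts $\w u_j\in\M$, so $\{u_j\}$ is bounded.

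The main obstacle I expect is the choice of $(r_1,\sigma_1)$ in the weighted-term step. One must make sure that the extension beyond $(r_\sigma,\sigma)$ stays inside the CKN region, in particular that $r_1>p^*_{\sigma_1}$ and $\sigma_1<\overline\eta$. This follows from openness of the region and the strict inequalities of Corollary~\ref{cor compact emb}.
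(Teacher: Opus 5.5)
Your proposal is correct and follows the paper's proof essentially step by step. It uses the same rescaling onto $\M$ and the same interpolation of the weighted term toward $(q^{**},0)$, via a point $(r_1,\sigma_1)$ obtained by extending the segment slightly past $(r_\sigma,\sigma)$ inside the open CKN region, followed by the same exact cancellation of the $q^{**}$-terms after multiplying the energy identity by $q^{**}$.
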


\begin{proof}
Let $\{u_j\}_{j\in\mathbb{N}}\subset \mathcal W$ be a $(PS)_c$ sequence for $\Phi_0$. Then
\begin{equation}\label{ps1red}
\int_{\mathbb{R}^N}\left(\frac1q|\Delta u_j|^{q}+\frac1p|\nabla u_j|^{p}
-\frac{\lambda}{r_\sigma}\frac{|u_j|^{r_\sigma}}{|x|^{\sigma}}
-\frac{1}{q^{**}}|u_j|^{q^{**}}\right)dx=c+o(1),
\end{equation}
and
\begin{equation}\label{ps2red}
\int_{\mathbb{R}^N}\left(|\Delta u_j|^{q}+|\nabla u_j|^{p}
-\lambda\frac{|u_j|^{r_\sigma}}{|x|^{\sigma}}-|u_j|^{q^{**}}\right)dx=o(1)\|u_j\|.
\end{equation}
Suppose by contradiction that $\|u_j\|\to+\infty$, and define
$$
t_j=I_s(u_j)^{-1/s},\qquad \tilde t_j=t_j^{-1},\qquad
\widetilde u_j=(u_j)_{t_j}.
$$
Since $I_s$ is coercive, $\tilde t_j\to+\infty$. Moreover,
$$
\widetilde u_j\in \mathcal{M},\qquad u_j=(\widetilde u_j)_{\tilde t_j},\qquad\forall j\in\mathbb{N}.
$$
Since $p<p^{*}<q$, we are in the regime $1<p<q$, so that $\beta_\ell=-\gamma\ell+N$ for
$\ell>1$, as in \eqref{defbetal}, and
$$
\int_{\mathbb{R}^N}|(\widetilde u_j)_{\tilde t_j}|^{\ell}\,dx
=\tilde t_j^{\,\beta_\ell}\int_{\mathbb{R}^N}|\widetilde u_j|^{\ell}\,dx .
$$
As in the proof of Lemma \ref{lemma ps1},
$$
\beta_{q^{**}}-s=\frac{(2q-p)q^{**}-pq}{q-p}>0,
\quad\text{since}\quad
(2q-p)q^{**}-pq=\frac{2q\big(N(q-p)+pq\big)}{N-2q}>0 .
$$
Also, by \eqref{ut bounded},
$\|u_j\|\le\max\{\tilde t_j^{\,s/q},\tilde t_j^{\,s/p}\}\,\|\widetilde u_j\|$, and since
$p,q>1$ and $\{\widetilde u_j\}\subset \mathcal{M}$ is bounded in $\mathcal W$, we have
$o(1)\|u_j\|=o(\tilde t_j^{\,s})$.
Using the scaling properties in \eqref{ps2red}, we obtain
\begin{equation}\label{nps1red}
\tilde t_j^{\,s}\int_{\mathbb{R}^N}\big(|\Delta\widetilde u_j|^{q}+|\nabla\widetilde u_j|^{p}\big)dx
=\lambda\,\tilde t_j^{\,s}\int_{\mathbb{R}^N}\frac{|\widetilde u_j|^{r_\sigma}}{|x|^{\sigma}}dx
+\tilde t_j^{\,\beta_{q^{**}}}\int_{\mathbb{R}^N}|\widetilde u_j|^{q^{**}}dx
+o(\tilde t_j^{\,s}).
\end{equation}
Dividing \eqref{nps1red} by $\tilde t_j^{\,s}$ gives
\begin{equation}\label{nps2red}
\int_{\mathbb{R}^N}\big(|\Delta\widetilde u_j|^{q}+|\nabla\widetilde u_j|^{p}\big)dx
=\lambda\int_{\mathbb{R}^N}\frac{|\widetilde u_j|^{r_\sigma}}{|x|^{\sigma}}dx
+\tilde t_j^{\,\beta_{q^{**}}-s}\int_{\mathbb{R}^N}|\widetilde u_j|^{q^{**}}dx+o(1).
\end{equation}
Since $\{\widetilde u_j\}_{j\in\mathbb{N}}\subset \mathcal{M}$ is bounded in $\mathcal W$ and $L^{r_\sigma
}_\sigma(\R^N)$, we get
$$
\int_{\mathbb{R}^N}|\widetilde u_j|^{q^{**}}dx=O\Big(\tilde t_j^{\,s-\beta_{q^{**}}}\Big),
$$
and, since $\beta_{q^{**}}>s$ and $\tilde t_j\to+\infty$,
\begin{equation}\label{nps22red}
\int_{\mathbb{R}^N}|\widetilde u_j|^{q^{**}}\to 0 .
\end{equation}
Now we interpolate the weighted term between $L^{q^{**}}(\mathbb{R}^N)$ and a nearby weighted
space. For $\mu>1$, set
$$
(r_1,\sigma_1):=(q^{**},0)+\mu\big((r_\sigma,\sigma)-(q^{**},0)\big)
=\big(q^{**}+\mu(r_\sigma-q^{**}),\,\mu\sigma\big).
$$
By Corollary \ref{cor compact emb} we have $1\le r_\sigma\in(p^{*}_{\sigma},q^{**}_{\sigma})$, and
$\sigma\in(p,2q)\subset[0,\overline\eta)$, since $q<N/2$. These conditions define an open set of
parameters, so for $\mu>1$ sufficiently close to $1$ we still have $\sigma_1\in[0,\overline\eta)$
and $1\le r_1\in(p^{*}_{\sigma_1},q^{**}_{\sigma_1})$; hence $\mathcal W\hookrightarrow
L^{r_1}_{\sigma_1}(\mathbb{R}^N)$ by Lemma \ref{lem:weighted-embedding-E}. Taking
$t:=1/\mu\in(0,1)$ we get
$$
(r_\sigma,\sigma)=t(r_1,\sigma_1)+(1-t)(q^{**},0),
$$
and H\"older's inequality gives
\begin{equation}\label{nps222red}
\int_{\mathbb{R}^N}\frac{|\widetilde u_j|^{r_\sigma}}{|x|^{\sigma}}dx
=\int_{\mathbb{R}^N}\frac{|\widetilde u_j|^{tr_1}}{|x|^{t\sigma_1}}\,|\widetilde u_j|^{(1-t)q^{**}}dx
\le\left(\int_{\mathbb{R}^N}\frac{|\widetilde u_j|^{r_1}}{|x|^{\sigma_1}}dx\right)^{t}
\left(\int_{\mathbb{R}^N}|\widetilde u_j|^{q^{**}}dx\right)^{1-t}.
\end{equation}
Since $\{\widetilde u_j\}_{j\in\mathbb{N}}$ is bounded in $L^{r_1}_{\sigma_1}(\mathbb{R}^N)$,
\eqref{nps22red} and \eqref{nps222red} imply
\begin{equation}\label{nps2222red}
\int_{\mathbb{R}^N}\frac{|\widetilde u_j|^{r_\sigma}}{|x|^{\sigma}}dx\to 0 .
\end{equation}
Applying the same scaling to \eqref{ps1red} and dividing by $\tilde t_j^{\,s}$ (note that
$c\,\tilde t_j^{\,-s}=o(1)$, since $s>0$), we obtain
$$
\int_{\mathbb{R}^N}\left(\frac1q|\Delta\widetilde u_j|^{q}+\frac1p|\nabla\widetilde u_j|^{p}\right)dx
=\frac{\lambda}{r_\sigma}\int_{\mathbb{R}^N}\frac{|\widetilde u_j|^{r_\sigma}}{|x|^{\sigma}}dx
+\frac{\tilde t_j^{\,\beta_{q^{**}}-s}}{q^{**}}\int_{\mathbb{R}^N}|\widetilde u_j|^{q^{**}}dx+o(1).
$$
Multiplying by $q^{**}$ yields
\begin{equation}\label{nps4red}
\int_{\mathbb{R}^N}\left(\frac{q^{**}}{q}|\Delta\widetilde u_j|^{q}
+\frac{q^{**}}{p}|\nabla\widetilde u_j|^{p}\right)dx
=\frac{\lambda q^{**}}{r_\sigma}\int_{\mathbb{R}^N}\frac{|\widetilde u_j|^{r_\sigma}}{|x|^{\sigma}}dx
+\tilde t_j^{\,\beta_{q^{**}}-s}\int_{\mathbb{R}^N}|\widetilde u_j|^{q^{**}}dx+o(1).
\end{equation}
Subtracting \eqref{nps2red} from \eqref{nps4red}, the critical term cancels exactly and we get
\begin{equation}\label{nps5red}
\int_{\mathbb{R}^N}\left[\left(\frac{q^{**}}{q}-1\right)|\Delta\widetilde u_j|^{q}
+\left(\frac{q^{**}}{p}-1\right)|\nabla\widetilde u_j|^{p}\right]dx
=\lambda\left(\frac{q^{**}}{r_\sigma}-1\right)
\int_{\mathbb{R}^N}\frac{|\widetilde u_j|^{r_\sigma}}{|x|^{\sigma}}dx+o(1).
\end{equation}
In the present regime $p<q<q^{**}$, so
$$
\frac{q^{**}}{q}-1>0,\qquad \frac{q^{**}}{p}-1>0 .
$$
Combining \eqref{nps2222red} and \eqref{nps5red}, we conclude that
$$
\left(\frac{q^{**}}{q}-1\right)\int_{\mathbb{R}^N}|\Delta\widetilde u_j|^{q}
+\left(\frac{q^{**}}{p}-1\right)\int_{\mathbb{R}^N}|\nabla\widetilde u_j|^{p}\le o(1),
$$
which contradicts the fact that $\widetilde u_j\in \mathcal{M}$. Hence $\{u_j\}_{j\in\mathbb{N}}$ is bounded in $\mathcal W$.
\end{proof}

\begin{lemma}\label{lemma ps reduced}
Assume $p^*<q$. Then $\Phi_0$ satisfies the $(PS)_c$ condition for every
\[
 c<c^*_{\rm red}=\frac2N S_{2,q}^{N/(2q)}.
\]
\end{lemma}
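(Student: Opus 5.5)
The plan mirrors the proof of Lemma~\ref{lemma ps full}, with the $p^*$-residual dropped. Let $\{u_j\}$ be a $(PS)_c$ sequence for $\Phi_0$ with $c<c^*_{\rm red}$.

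\textbf{Extracting a limit.} By Lemma~\ref{lemma ps0 bounded} the sequence is bounded, so up to a subsequence $u_j\rightharpoonup u$ in $\mathcal W$. Corollary~\ref{cor compact emb} gives $u_j\to u$ in $L^{r_\sigma}_\sigma(\R^N)$. Corollary~\ref{s2}, which was written for $\Phi_\tau$ with $\tau=0$ included, yields a.e.\ convergence of $u_j$, $\nabla u_j$ and $\Delta u_j$. We then pass to the limit in $\Phi_0'(u_j)\phi=o(1)$, using weak convergence of the nonlinear fluxes in $L^{q'}$, $L^{p'}$ and $L^{(q^{**})'}$. This shows that $u$ is a weak solution of \eqref{intro:problem-p-less-q-large}.

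\textbf{Splitting the energy.} Set $v_j=u_j-u$. The Brezis--Lieb lemma applies to $\|\Delta\cdot\|_q^q$, $\|\nabla\cdot\|_p^p$ and $\|\cdot\|_{q^{**}}^{q^{**}}$, and the weighted term converges. Passing to a subsequence, let
\[
\int_{\R^N}|\Delta v_j|^q\,dx\to a,\qquad \int_{\R^N}|\nabla v_j|^p\,dx\to b,\qquad \int_{\R^N}|v_j|^{q^{**}}\,dx\to c_0 .
\]
Subtracting $\Phi_0'(u)u=0$ from $\Phi_0'(u_j)u_j=o(1)$ gives $a+b=c_0$. The Sobolev inequality \eqref{S1pS2q} gives $c_0\le S_{2,q}^{-q^{**}/q}a^{q^{**}/q}$. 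The energy identity becomes
\[
c=\Phi_0(u)+h(a,b,c_0),
\]
with $h$ as in the definition of $c^*_{\rm red}$.

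\textbf{Nonnegativity of $\Phi_0(u)$.} This is the step needing most care. The right-hand side of \eqref{intro:problem-p-less-q-large} satisfies \eqref{condf_0} on $\Omega=\R^N\setminus\{0\}$. Hence Theorem~\ref{thm:intro-regularity} applies, and so does the Pohozaev identity (Proposition~\ref{prop:pohozaev-identity} with $\tau_1=0$, $\tau_2=1$):
\[
\frac{N-2q}{q}\int_{\R^N}|\Delta u|^q\,dx+\frac{N-p}{p}\int_{\R^N}|\nabla u|^p\,dx=\lambda\frac{N-\sigma}{r_\sigma}\int_{\R^N}\frac{|u|^{r_\sigma}}{|x|^\sigma}\,dx+\frac{N}{q^{**}}\int_{\R^N}|u|^{q^{**}}\,dx .
\]
Since $N/q^{**}=(N-2q)/q$, the critical term can be absorbed into the $\Delta$ term alone. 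Combining with the Nehari identity and the proportionality relation coming from $r_\sigma(2q-p)=pq+\sigma(q-p)$, exactly as in \eqref{scaled-cancellation-full-PS} but with no $p^*$ term, we get
\[
\Bigl(\frac1q-\frac1{r_\sigma}\Bigr)\Bigl(\int_{\R^N}|\Delta u|^q\,dx-\int_{\R^N}|u|^{q^{**}}\,dx\Bigr)+\Bigl(\frac1p-\frac1{r_\sigma}\Bigr)\int_{\R^N}|\nabla u|^p\,dx=0 .
\]
Inserting this into $\Phi_0(u)$ should give $\Phi_0(u)=\frac2N\int_{\R^N}|u|^{q^{**}}\,dx\ge0$.

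The delicate point is the algebra behind this identity. The 2-vector determinant argument from the $(H_{12})$ proof requires the second vector to be nonzero. Here $p\neq q^*$ holds automatically, because $p<p^*<q<q^*$.

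\textbf{Conclusion.} If $(a,b,c_0)\neq 0$, then $(a,b,c_0)\in\mathcal X_{\rm red}$. Lemma~\ref{c-star-reduced} then gives
\[
c\ge h(a,b,c_0)\ge c^*_{\rm red},
\]
a contradiction. Hence $a=b=0$, so $u_j\to u$ strongly in $\mathcal W$.
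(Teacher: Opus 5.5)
Your proposal is correct and follows the paper's proof essentially step for step. The steps are: boundedness via Lemma~\ref{lemma ps0 bounded}; the Brezis--Lieb splitting producing $(a,b,c_0)$ with $a+b=c_0$ and $c_0\le S_{2,q}^{-q^{**}/q}a^{q^{**}/q}$; regularity plus the Pohozaev identity with $\tau_1=0$, $\tau_2=1$ to get $\Phi_0(u)=\frac{2}{N}\int_{\R^N}|u|^{q^{**}}\,dx\ge 0$; and the contradiction with $c<c^*_{\rm red}$. The only difference is that you write out the Nehari--Pohozaev algebra, including the check $p\neq q^*$, which the paper leaves implicit by referring to Lemma~\ref{lemma ps full}.
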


\begin{proof}
By Lemma \ref{lemma ps0 bounded} we have that the $(PS)_c$ sequence is bounded. The proof now follows the same argument as Lemma~\ref{lemma ps full}, with
the $p^*$-term and its corresponding residual variable omitted. More
precisely, after passing to a subsequence, the Brezis--Lieb and derivative
splittings produce a triple $(a,b,c_0)\in[0,\infty)^3$ satisfying
\[
a+b=c_0,
\qquad
c_0\le S_{2,q}^{-q^{**}/q}a^{q^{**}/q},
\]
and
\[
c=\Phi_0(u)+h(a,b,c_0).
\]
By Theorem~\ref{thm:intro-regularity}, the weak limit \(u\) has the regularity required in Proposition~\ref{prop:pohozaev-identity}. Hence, taking
\(\tau_1=0\) and \(\tau_2=1\), the identities for \(u\) give
\[
\Phi_0(u)=\frac{2}{N}\int_{\R^N}|u|^{q^{**}}\,dx\ge0.
\]
If $(a,b,c_0)\neq(0,0,0)$, then $(a,b,c_0)\in \mathcal{X}_{\mathrm{red}}$, and hence
\[
c=\Phi_0(u)+h(a,b,c_0)\ge c_{\mathrm{red}}^*,
\]
a contradiction. Thus $a=b=c_0=0$, and the conclusion follows exactly
as in Lemma~\ref{lemma ps full}.
\end{proof}


\subsection*{Proof of Theorem \ref{intro:thm:p-less-q-large}}

The proof follows the same argument used in the proof of Theorem
\ref{intro:thm:p-less-q}, applied now to the functional $\Phi:=\Phi_0$ and with the compactness threshold given by Lemma \ref{c-star-reduced},
\(
c_{\mathrm{red}}^*.
\)

Indeed, since $p^*<q$, we have $p<q$, and hence the inverse scaling in
\eqref{ut definition} applies. For $u\in\mathcal{M}$ and $t\ge0$, we have
\[
\Phi_0(u_t)
=
t^s\left(1-\frac{\lambda}{\widetilde\Psi(u)}\right)
-\frac{t^{\beta_{q^{**}}}}{q^{**}}
\int_{\R^N}|u|^{q^{**}}\,dx,
\]
where $\beta_{q^{**}}>s.$
Thus, the construction of the sets $A_0$, $B_0$, $A$, $B$, and $X$ is
the same as in the proof of Theorem
\ref{intro:thm:p-less-q}.
Moreover, the argument leading to
\eqref{uniform-qcritical-lower-bound} remains unchanged and gives a
constant $C>0$, independent of the auxiliary integer $n$, such that
\[
\int_{\R^N}|u|^{q^{**}}\,dx\ge C
\qquad\text{for every }u\in A_0.
\]
Consequently, exactly as in the proof of Theorem
\ref{intro:thm:p-less-q}, we may choose $R> \rho>0$ and then take
$\lambda<\lambda_k$ sufficiently close to $\lambda_k$ so that
\[
\sup_{u\in A}\Phi_0(u)\le0
<
\inf_{u\in B}\Phi_0(u),
\qquad
\sup_{u\in X}\Phi_0(u)<c_{\mathrm{red}}^*.
\]
By Lemma \ref{lemma ps reduced}, $\Phi_0$ satisfies the
$(PS)_c$ condition for every $0<c<c_{\mathrm{red}}^*.$
Therefore, Theorem \ref{new m1} yields $m$ distinct pairs of nontrivial
critical points of $\Phi_0$ at positive energy levels. These critical
points are weak solutions of
\eqref{intro:problem-p-less-q-large}. Hence there exists $\delta_k>0$
such that, for every $\lambda\in(\lambda_k-\delta_k,\lambda_k),$ problem \eqref{intro:problem-p-less-q-large} possesses $m$ distinct
pairs of nontrivial weak solutions.

\section{Regularity results}\label{sec-regularity}

We begin by rewriting the fourth-order equation as an equivalent
coupled second-order system. This formulation separates the relation
between $u$ and the nonlinear flux $w:=|\Delta u|^{q-2}\Delta u$ from the
equation satisfied by the flux, and will be the starting point for the
regularity arguments developed below. Let
\[\mathfrak a_r(t):=|t|^{r-2}t,
\qquad t\in\mathbb R,\quad r>1.
\]
Recall that $\mathfrak a_q:\mathbb R\to\mathbb R$
is a bijection whose inverse is $\mathfrak a_{q'}$, that is,
\[
\mathfrak a_{q'}(\mathfrak a_q(t))=t
\quad\text{and}\quad
\mathfrak a_q(\mathfrak a_{q'}(t))=t,
\qquad\forall t\in\mathbb R.
\]

\begin{proposition}[Second-order system formulation]
\label{prop:system-equivalence}
Let \(1<p<N\) and \(1<q<N/2\), and let
\(f(x,t)\) be such that all the distributional identities below are
well defined. Then the following statements are equivalent.

\begin{enumerate}[label=\textnormal{(\roman*)}]
\item There exist $u\in D^{1,p}(\mathbb R^N)\cap L^{q^{**}}(\mathbb R^N)$ and  $w\in L^{q'}(\mathbb R^N),$
such that
\begin{equation}
\label{eq:second-order-system}
\begin{cases}
\Delta w-\Delta_pu=f(x,u)
&\text{in }\mathcal D'(\mathbb R^N),\\[1mm]
\Delta u=|w|^{q'-2}w
&\text{in }\mathcal D'(\mathbb R^N).
\end{cases}
\end{equation}

\item There exists $u\in D^{1,p}(\mathbb R^N)\cap D^{2,q}(\mathbb R^N)$ such that
\begin{equation}
\label{eq:mixed-fourth-order}
\Delta\bigl(|\Delta u|^{q-2}\Delta u\bigr) -\Delta_pu =f(x,u)
\qquad\text{in }\mathcal D'(\mathbb R^N).
\end{equation}
\end{enumerate}
Moreover, the correspondence between the two formulations is given by $w=|\Delta u|^{q-2}\Delta u.$
\end{proposition}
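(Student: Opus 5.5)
The plan is to prove the two implications separately, with $w=|\Delta u|^{q-2}\Delta u$ as the link between them. The main analytic point is recovering full second derivatives in $L^q$ from the single relation $\Delta u=|w|^{q'-2}w$.

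\emph{(ii) $\Rightarrow$ (i).} Take $u\in D^{1,p}(\mathbb R^N)\cap D^{2,q}(\mathbb R^N)$ and set $w:=\mathfrak a_q(\Delta u)=|\Delta u|^{q-2}\Delta u$. Since $|w|^{q'}=|\Delta u|^{q}$, we get $w\in L^{q'}(\mathbb R^N)$. Also $u\in L^{q^{**}}(\mathbb R^N)$ by the definition of $D^{2,q}(\mathbb R^N)$. The identity $\mathfrak a_{q'}\circ\mathfrak a_q=\mathrm{id}$ gives $|w|^{q'-2}w=\Delta u$ a.e. Because $\Delta u\in L^q\subset L^1_{\rm loc}$, this is also an identity in $\mathcal D'(\mathbb R^N)$. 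Finally, \eqref{eq:mixed-fourth-order} reads $\Delta w-\Delta_p u=f(x,u)$ in $\mathcal D'(\mathbb R^N)$, which is exactly the first line of \eqref{eq:second-order-system}.

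\emph{(i) $\Rightarrow$ (ii).} Start from $u\in D^{1,p}\cap L^{q^{**}}$ and $w\in L^{q'}$ solving \eqref{eq:second-order-system}. Set $g:=|w|^{q'-2}w$. Then $|g|^q=|w|^{q'}$, so $g\in L^q(\mathbb R^N)$ and $\Delta u=g$ in $\mathcal D'$. Inverting with $\mathfrak a_q\circ\mathfrak a_{q'}=\mathrm{id}$ gives $w=|\Delta u|^{q-2}\Delta u$ a.e. Substituting this into the first equation yields \eqref{eq:mixed-fourth-order} in $\mathcal D'(\mathbb R^N)$, and it also gives the stated correspondence.

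\emph{Main obstacle: showing $u\in D^{2,q}(\mathbb R^N)$.} We need $D^2u\in L^q$, but we only know $\Delta u\in L^q$ and $u\in L^{q^{**}}$. I would use Calder\'on--Zygmund theory as follows.
\begin{enumerate}[label=(\arabic*)]
\item Since $1<q<N/2$, let $v:=\Gamma*g$ be the Newtonian potential of $g$. The Hardy--Littlewood--Sobolev inequality gives $v\in L^{q^{**}}(\mathbb R^N)$.
\item The Calder\'on--Zygmund estimate gives $\|D^2v\|_q\le C\|g\|_q$, with $\Delta v=g$.
\item The difference $h:=u-v$ is harmonic in $\mathcal D'$, hence smooth by Weyl's lemma.
\item Since $h\in L^{q^{**}}(\mathbb R^N)$, the mean value property over balls $B_R(x)$ with $R\to\infty$ gives $|h(x)|\le C R^{-N/q^{**}}\|h\|_{q^{**}}\to0$, so $h\equiv0$.
\end{enumerate}
Therefore $u=v$, so $D^2u\in L^q(\mathbb R^N)$ and $u\in D^{2,q}(\mathbb R^N)$, with $\|D^2u\|_q\le C\|\Delta u\|_q$. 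This is consistent with using $\|\Delta u\|_q$ as the norm on $D^{2,q}$. Together with $u\in D^{1,p}$, this gives $u\in\mathcal W$ and completes the equivalence.
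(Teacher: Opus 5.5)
Your proof is correct and follows the same route as the paper: pass between the two formulations via $w=|\Delta u|^{q-2}\Delta u$ using $\mathfrak a_q\circ\mathfrak a_{q'}=\mathfrak a_{q'}\circ\mathfrak a_q=\mathrm{id}$, and in (i)$\Rightarrow$(ii) deduce $u\in D^{2,q}(\mathbb R^N)$ from $\Delta u\in L^q$ and $u\in L^{q^{**}}$.

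The one difference is that the paper dismisses that last step as ``the standard characterization of the homogeneous second-order Sobolev space.'' You prove it explicitly: Newtonian potential with Hardy--Littlewood--Sobolev, then the Calder\'on--Zygmund bound on $D^2v$, then Liouville for harmonic functions in $L^{q^{**}}$ via the mean value property. That is a valid and complete justification.
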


\begin{proof}
Assume first that \textnormal{(i)} holds. Since \(w\in L^{q'}(\mathbb R^N)\),
we have $|w|^{q'-2}w\in L^q(\mathbb R^N).$ Indeed, $\bigl||w|^{q'-2}w\bigr|^q
= |w|^{q'},$ because $(q'-1)q=q'.$ 
The second equation in \eqref{eq:second-order-system} therefore gives  $\Delta u\in L^q(\mathbb R^N).$ Since \(u\in L^{q^{**}}(\mathbb R^N)\), the standard characterization of the homogeneous second-order Sobolev space yields $u\in D^{2,q}(\mathbb R^N).$
Applying \(\mathfrak a_q\) to the identity $\Delta u=\mathfrak a_{q'}(w)$ and using \(\mathfrak a_q\circ \mathfrak a_{q'}=\operatorname{id}_{\mathbb R}\), we obtain $w=\mathfrak a_q(\Delta u) = |\Delta u|^{q-2}\Delta u.$ a.e. in $\R^N$. Substituting this identity into the first equation of
\eqref{eq:second-order-system} gives \eqref{eq:mixed-fourth-order}. Thus \textnormal{(ii)} holds.

Conversely, assume that \textnormal{(ii)} holds and define $w:=|\Delta u|^{q-2}\Delta u.$ Since \(\Delta u\in L^q(\mathbb R^N)\), it follows that
\(w\in L^{q'}(\mathbb R^N)\). More precisely, $|w|^{q'} = |\Delta u|^{(q-1)q'} =|\Delta u|^q,$ because $(q-1)q'=q.$ Moreover, since \(\mathfrak a_{q'}=\mathfrak a_q^{-1}\), we have
\[
|w|^{q'-2}w=\mathfrak a_{q'}(w)
=\mathfrak a_{q'}(\mathfrak a_q(\Delta u))
=\Delta u
\]
pointwise almost everywhere. Hence
\[
\Delta u=|w|^{q'-2}w
\qquad\text{in }\mathcal D'(\mathbb R^N).
\]
Finally, equation \eqref{eq:mixed-fourth-order} becomes
\[
\Delta w-\Delta_pu=f(x,u)
\qquad\text{in }\mathcal D'(\mathbb R^N).
\]
Therefore \((u,w)\) satisfies \eqref{eq:second-order-system}, and
\textnormal{(i)} follows.
\end{proof}

We record a form of the
regularity-lifting principle used below.  Contraction-based
regularity-lifting lemmas for nonlinear integral equations and
systems can be found, for example, in \cite{LiXu2017, MaChenLi2011}.  The version needed here is adapted to a pointwise inequality and only requires positivity, order preservation, positive homogeneity, and subadditivity.

\begin{lemma}
\label{lem:unified-abstract-lifting}
Let \(\Omega\subset\mathbb R^N\) be measurable and let \(1<r<\tau<\infty\). Suppose that \(\mathcal R\) is positive, order preserving, positively
homogeneous and subadditive:
\[
  \mathcal R(v_1+v_2)
  \leq
  \mathcal R(v_1)+\mathcal R(v_2)
\]
for all nonnegative \(v_1,v_2\). Assume also that
\[
  \|\mathcal R(v)\|_{L^\ell(\Omega)}
  \leq
  \kappa_\ell\|v\|_{L^\ell(\Omega)},
  \qquad \ell=r,\tau,
\]
where \(max\{\kappa_r,\kappa_\tau\}<1.\) If \(v\in L^r(\Omega)\), \(h\in L^\tau(\Omega)\), \(v,h\ge 0\) and
\begin{equation}\label{pointwize ineq}
    v\leq\mathcal R(v)+h
  \qquad\text{a.e. in }\Omega,
\end{equation}
then \(v\in L^\tau(\Omega)\). Moreover,
\[
  \|v\|_{L^\tau(\Omega)}
  \leq
  \frac{1}{1-\kappa_\tau}\|h\|_{L^\tau(\Omega)}.
\]
\end{lemma}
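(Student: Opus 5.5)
The plan is to prove an a priori bound by truncation and then remove the truncation by monotone convergence. The bound comes from a fixed-point/Neumann-series argument run simultaneously in $L^r$ and $L^\tau$.

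\textbf{Truncation.} For $M>0$ set
\[
v_M:=\min\{v,M\}\,\chi_{\Omega_M},
\]
where $\Omega_M\subset\Omega$ is an increasing family of sets of finite measure exhausting $\Omega$ (e.g. $\Omega\cap B_M$). Each $v_M$ is bounded with support of finite measure, so $v_M\in L^r(\Omega)\cap L^\tau(\Omega)$.

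\textbf{Pointwise inequality for the truncation.} We have $v_M\le v$, and since $\mathcal R$ is order preserving, $\mathcal R(v)\ge \mathcal R(v_M)$. This alone does not bound $v_M$ by $\mathcal R(v_M)+h$, because $\mathcal R(v)$ also involves $v-v_M$. We therefore split
\[
v=v_M+(v-v_M).
\]
Subadditivity gives
\[
v\le \mathcal R(v_M)+\mathcal R(v-v_M)+h,
\]
and hence, since $v_M\le v$,
\[
v_M\le \mathcal R(v_M)+g_M,\qquad g_M:=\mathcal R(v-v_M)+h.
\]
At this stage $g_M$ is only known to lie in $L^r+L^\tau$.

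\textbf{Standard lifting step (the main obstacle).} The lifting lemmas of \cite{MaChenLi2011} handle exactly this situation. They use a decomposition in which the tail $v-v_M$ is small in $L^r$, but they then need $\mathcal R$ to map $L^r$ into $L^\tau$, which the present hypotheses do not provide. So I would instead try to obtain the bound in $L^\tau$ directly through a contraction on a closed set. The key consequence of positivity, order preservation, homogeneity and subadditivity is
\[
|\mathcal R(a)-\mathcal R(b)|\le \mathcal R(|a-b|)\qquad\text{for nonnegative }a,b.
\]
Using this, consider the map
\[
T(z):=\min\{\mathcal R(z)+h,\;v\}
\]
on nonnegative functions. $T$ is a contraction in both $L^r$ and $L^\tau$ (with constants $\kappa_r,\kappa_\tau$), since $\min\{\cdot,v\}$ is 1-Lipschitz. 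Moreover $T(v)=v$ by \eqref{pointwize ineq}.

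\textbf{Conclusion via two contractions.} Start the iteration at $z_0=0$. Then $z_n:=T^n(0)$ is nondecreasing, by order preservation and induction, and bounded by $v$. Being a Cauchy sequence in $L^\tau$ (with $z_1=\min\{h,v\}\in L^\tau$), it converges in $L^\tau$ to the unique $L^\tau$ fixed point $z$. Being also Cauchy in $L^r$, it converges to the unique $L^r$ fixed point, which is $v$ since $v\in L^r$. The two limits agree a.e., so
\[
v=z\in L^\tau(\Omega).
\]
Finally, $z\le \mathcal R(z)+h$ gives
\[
\|z\|_\tau\le \kappa_\tau\|z\|_\tau+\|h\|_\tau,
\]
which is the stated estimate. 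The delicate step is verifying that $T$ is a contraction: this rests on the Lipschitz-type inequality for $\mathcal R$, which I expect to follow from subadditivity applied to $a\le b+|a-b|$ together with monotonicity.
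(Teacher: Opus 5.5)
Your contraction argument is correct, but it is not the route the paper takes.

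\textbf{The paper's route.} The paper never builds a fixed-point map. It iterates the inequality directly, using order preservation and subadditivity, to get $v\le \mathcal R^m(v)+\sum_{j=0}^{m-1}\mathcal R^j(h)$. It then removes $\mathcal R^m(v)$ with the $L^r$ contraction: $\|\mathcal R^m(v)\|_{L^r}\le\kappa_r^m\|v\|_{L^r}\to0$, hence $\mathcal R^m(v)\to0$ a.e.\ along a subsequence. Finally it sums the Neumann series in $L^\tau$. This gives the explicit majorant $v\le S:=\sum_{j\ge0}\mathcal R^j(h)$ with $\|S\|_{L^\tau}\le\|h\|_{L^\tau}/(1-\kappa_\tau)$, from which the estimate is read off.

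\textbf{Your route.} You turn the inequality into an equation through the cap $T(z)=\min\{\mathcal R(z)+h,v\}$, so that $T(v)=v$.
\begin{enumerate}
\item The cap is what makes $T$ map $L^r_+$ into itself, since $h$ need not lie in $L^r$.
\item The bound $|\mathcal R(a)-\mathcal R(b)|\le\mathcal R(|a-b|)$ does follow from $a\le b+|a-b|$, monotonicity and subadditivity; homogeneity is not needed. It makes $T$ a contraction in both norms.
\item Hence $\|z_n-v\|_{L^r}\le\kappa_r^n\|v\|_{L^r}\to0$, while $(z_n)$ is Cauchy in $L^\tau$ because $z_1\le h$. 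Identifying the two limits gives $v\in L^\tau$.
\item The norm bound then follows by absorbing $\kappa_\tau\|v\|_{L^\tau}$, which is legitimate once $\|v\|_{L^\tau}<\infty$.
\end{enumerate}
This is the Chen--Li ``regularity lifting by contracting operators'' (uniqueness of the fixed point in the larger space), adapted to a one-sided inequality.

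\textbf{What each approach buys.} Yours has a mild structural advantage: $\mathcal R$ is only ever applied to functions lying in a single space, $L^r$ or $L^\tau$. The paper's iteration applies $\mathcal R$ to mixed sums such as $\mathcal R^{m-1}(v)+\sum_{j<m-1}\mathcal R^j(h)\in L^r+L^\tau$, which relies on $\mathcal R$ being defined and subadditive on arbitrary nonnegative functions. The paper's argument is shorter and directly produces an explicit $L^\tau$ majorant.

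Your truncation paragraph, and the opening sentence announcing a truncation/monotone-convergence plan, play no role in the argument you actually complete. They should be dropped.
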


\begin{proof}
By positivity, order preservation and subadditivity, iteration of \eqref{pointwize ineq} gives
\[v\leq
  \mathcal R^m(v)+ \sum_{j=0}^{m-1}\mathcal R^j(h) \qquad\text{a.e. in }\Omega.
\]
Since \(\mathcal R\) is a contraction on \(L^r(\Omega)\), $  \|\mathcal R^m(v)\|_{L^r(\Omega)}
  \leq
  \kappa_r^m\|v\|_{L^r(\Omega)}
  \rightarrow0.$ After passing to a subsequence,
\(\mathcal R^m(v)\to0\) a.e. in $\Omega$. 
The series on the right converges in \(L^\tau(\Omega)\), because
\[
  \sum_{j=0}^{\infty}
  \|\mathcal R^j(h)\|_{L^\tau(\Omega)}
  \leq
  \sum_{j=0}^{\infty}\kappa_\tau^j\|h\|_{L^\tau(\Omega)}
  =
  \frac{1}{1-\kappa_\tau}\|h\|_{L^\tau(\Omega)}.
\]
Therefore,
\(
0\le v
  \leq
  \sum_{j=0}^{\infty}\mathcal R^j(h)\)
 a.e. in \(\Omega,\)
and the conclusion follows.
\end{proof}

We use the Riesz potentials
\[
  I_\beta(g)(x) := c_{N,\beta}\int_{\mathbb R^N} \frac{g(y)}{|x-y|^{N-\beta}}\,dy,
  \qquad 0<\beta<N.
\]
We shall use the following standard global consequence of the
Hardy--Littlewood--Sobolev inequality.

\begin{lemma}
\label{lem:compact-source-riesz}
Let \(\beta\in(0,N)\), \(r\in(1,\infty)\) and suppose that
\(g\in L^r(\mathbb R^N)\) has compact support. If \(r<N/\beta\), then
\[
  I_\beta(g)\in L^\tau(\mathbb R^N)\quad\text{for every}\quad  \frac{N}{N-\beta}<\tau\leq
  \frac{Nr}{N-\beta r}.
\]
If \(r\geq N/\beta\), the same conclusion holds for every finite $\tau>{N}/({N-\beta}).$
\end{lemma}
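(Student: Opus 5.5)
Split the proof according to whether the target exponent stays below the Hardy--Littlewood--Sobolev endpoint. The plan is to combine the HLS bound for $I_\beta$ on $L^r$ with the fact that the compact support of $g$ gives good decay of $I_\beta(g)$ at infinity.

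\emph{Local part.} Fix $R>0$ with $\operatorname{supp} g\subset B_R$, and estimate $I_\beta(g)$ separately on $B_{2R}$ and on $\mathbb R^N\setminus B_{2R}$. If $r<N/\beta$, the Hardy--Littlewood--Sobolev inequality gives $I_\beta(g)\in L^{r^\sharp}(\mathbb R^N)$ with $r^\sharp:=Nr/(N-\beta r)$. Since $B_{2R}$ has finite measure, Hölder's inequality then yields $I_\beta(g)\in L^\tau(B_{2R})$ for every $\tau\le r^\sharp$.

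If $r\ge N/\beta$, we instead use that $g\in L^{r_0}$ for every $1<r_0\le r$, because $g$ has compact support. Choosing $r_0<N/\beta$ close to $N/\beta$ makes $Nr_0/(N-\beta r_0)$ arbitrarily large. Applying HLS with this $r_0$, we get $I_\beta(g)\in L^\tau(B_{2R})$ for every finite $\tau$.

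\emph{Tail part.} For $|x|\ge 2R$ and $|y|\le R$ we have $|x-y|\ge |x|/2$, so
\[
|I_\beta(g)(x)|\le C|x|^{\beta-N}\|g\|_{L^1(B_R)}\le C'|x|^{\beta-N}.
\]
Here $\|g\|_{L^1(B_R)}<\infty$ because $g\in L^r$ has compact support. Hence $\int_{|x|>2R}|I_\beta(g)|^\tau\,dx<\infty$ exactly when $\tau(N-\beta)>N$, that is, when $\tau>N/(N-\beta)$.

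\emph{Combining.} The local and tail estimates together give the lower bound $\tau>N/(N-\beta)$. The upper bound is $\tau\le Nr/(N-\beta r)$ when $r<N/\beta$, and no upper bound other than finiteness when $r\ge N/\beta$. This is precisely the range in the statement. The range is nonempty in the first case because $Nr/(N-\beta r)>N/(N-\beta)$, which is equivalent to $r>1$.

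There is no real obstacle here. The only points needing care are the case $r\ge N/\beta$, which requires passing to a smaller exponent $r_0$ via the compact support, and checking that the pointwise tail bound holds uniformly.
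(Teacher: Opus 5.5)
Your proposal is correct and follows the same route as the paper: the Hardy--Littlewood--Sobolev inequality on a ball containing the support, plus the pointwise decay $|I_\beta(g)(x)|\le C|x|^{\beta-N}\|g\|_{L^1}$ outside a large ball, which gives integrability exactly for $\tau>N/(N-\beta)$. Your handling of the case $r\ge N/\beta$, by lowering to some $r_0\in(1,N/\beta)$ using the compact support, makes explicit a step the paper leaves implicit.
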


\begin{proof}
On a fixed ball containing the support of \(g\), the assertion follows from the Hardy--Littlewood--Sobolev inequality. Outside a large ball,
\[
  I_\beta(g)(x)
  \leq
  C|x|^{\beta-N}\|g\|_{L^1},
\]
and this function belongs to \(L^\tau\) precisely when
\(\tau>N/(N-\beta)\). Combining the local and exterior estimates proves
the result.
\end{proof}

All the functions to which these operators are applied below are
nonnegative.
For measurable functions \(a\geq0\) and $v$, define
\begin{equation*}
  \mathcal T_a(v):= I_2\left[\left(I_2(a|v|^{q-1})\right)^{1/(q-1)}\right].
\end{equation*}
The composition defining $  \mathcal T_a$ has the structure of a
Havin--Maz'ya potential.  Indeed, for
$d\mu_v=a|v|^{q-1}\,dx$, it takes the form
\[
  \mathcal T_a(v)
= I_2\left[
  \bigl( I_2\mu_v\bigr)^{1/(q-1)}
\right].
\]
Such nonlinear compositions of Riesz potentials arise naturally
in nonlinear potential theory; see
\cite{CianchiSchwarzacher2018,MR0409858,KuusiMingione2014}.
Here, however, $\mu_v$ depends on the unknown itself, and
the estimate needed below follows directly from two applications of
the Hardy--Littlewood--Sobolev inequality.
We have the following lemma.

\begin{lemma}
Let $1<q<N/2$ and
\begin{equation}\label{eq:value.of.t}
\tau>\max\left\{\frac{N(q-1)}{N-2q},\frac{N}{N-2} \right\}.
\end{equation}
If \(a\in L^{N/(2q)}(\mathbb R^N)\), $a\ge 0$ and
\(v\in L^\tau(\mathbb R^N)\), then
\(\mathcal T_a(v)\in L^\tau(\mathbb R^N)\) and
\begin{equation}\label{eq:Ta-bound}
\|\mathcal T_a(v)\|_{\tau}
\le C_\tau \|a\|_{\frac{N}{2q}}^{1/(q-1)}\|v\|_{\tau}.
\end{equation}
\end{lemma}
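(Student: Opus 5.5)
The estimate comes from two applications of the Hardy--Littlewood--Sobolev inequality $\|I_2 g\|_{s^*}\le C\|g\|_s$, $1/s^*=1/s-2/N$, $1<s<N/2$, with Hölder's inequality in between. Since $a$ and $|v|$ are nonnegative, all potentials are nonnegative, so the power $1/(q-1)$ is harmless. We first bound the outer potential and then the inner one, choosing the exponents so that both steps stay in the admissible HLS range; this is exactly what \eqref{eq:value.of.t} guarantees.

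\emph{Outer step.} We put $g:=\bigl(I_2(a|v|^{q-1})\bigr)^{1/(q-1)}$ and want $\|I_2 g\|_\tau\le C\|g\|_{\ell}$ with $1/\ell=1/\tau+2/N$. The requirement $\ell>1$ reads $1/\tau<1-2/N$, i.e.\ $\tau>N/(N-2)$, and $\ell<N/2$ holds automatically. Since $\|g\|_\ell=\|I_2(a|v|^{q-1})\|_{\ell/(q-1)}^{1/(q-1)}$, the outer step reduces to bounding $I_2(a|v|^{q-1})$ in $L^{m}$, $m:=\ell/(q-1)$.

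\emph{Inner step.} We apply HLS again: $\|I_2(a|v|^{q-1})\|_m\le C\|a|v|^{q-1}\|_k$ with $1/k=1/m+2/N$. Hölder with $a\in L^{N/(2q)}$ and $|v|^{q-1}\in L^{\tau/(q-1)}$ gives $\|a|v|^{q-1}\|_k\le \|a\|_{N/(2q)}\|v\|_\tau^{q-1}$ provided
\[
\frac1k=\frac{2q}{N}+\frac{q-1}{\tau}.
\]
We check consistency: $1/m=(q-1)/\ell=(q-1)/\tau+2(q-1)/N$, so $1/m+2/N=(q-1)/\tau+2q/N$, which matches exactly. This is the scale invariance of $\mathcal T_a$. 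The HLS conditions for the inner step are $1<k$ and $m<\infty$, i.e.\ $k<N/2$. First, $k>1$ means $2q/N+(q-1)/\tau<1$, i.e.\ $\tau>N(q-1)/(N-2q)$, which is the first condition in \eqref{eq:value.of.t}. Second, $k<N/2$ means $1/k>2/N$, which is automatic since $2q/N>2/N$. Finally, $m$ finite and larger than $k$ is fine. Chaining the two steps gives
\[
\|\mathcal T_a(v)\|_\tau\le C\|g\|_\ell=C\|I_2(a|v|^{q-1})\|_m^{1/(q-1)}\le C\bigl(\|a\|_{N/(2q)}\|v\|_\tau^{q-1}\bigr)^{1/(q-1)},
\]
which is \eqref{eq:Ta-bound}.

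The main point to watch is the exponent bookkeeping: both HLS applications need exponents strictly between $1$ and $N/2$. If $q-1<1$, some intermediate exponent such as $\ell/(q-1)$ might fall outside this range. Here we only need $m=\ell/(q-1)\in(1,\infty)$, and $m>k>1$ always holds. Also, HLS applied to $a|v|^{q-1}\in L^k$ requires no integrability of $a|v|^{q-1}$ in $L^1$, so no extra assumption appears. Once the two constraints are verified to be exactly \eqref{eq:value.of.t}, the rest is routine.
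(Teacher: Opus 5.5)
Your proposal is correct and is essentially the paper's proof. Your $k$ and $\ell$ are the paper's $m$ and $r$: Hölder puts $a|v|^{q-1}$ in $L^{k}$ with $1/k=2q/N+(q-1)/\tau$, HLS lifts it to $L^{\ell/(q-1)}$, and a second HLS from $L^{\ell}$ to $L^{\tau}$ with $1/\ell=1/\tau+2/N$ finishes the bound. The two lower bounds on $\tau$ are precisely $k>1$ and $\ell>1$.
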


\begin{proof}
Consider \(m>0\) and \(r>0\) such that
\[
  \frac1m:=\frac{2q}{N}+\frac{q-1}{\tau},
  \qquad
  \frac1r:=\frac2N+\frac1\tau.
\]
The assumptions on \(\tau\) ensure that \( 1<m,r< N/2\).
Hölder's inequality gives
\[\|a|v|^{q-1}\|_{m}\leq \|a\|_{\frac{N}{2q}}\|v\|_{\tau}^{q-1}.\]
Since \((q-1)/{r}=1/m-2/N\), by the Hardy--Littlewood--Sobolev inequality we obtain
\[
  \left\|
  I_2(a|v|^{q-1})
  \right\|_{\frac{r}{q-1}}
  \leq
  C\|a|v|^{q-1}\|_{m}\le C
  \|a\|_{\frac{N}{2q}}
  \|v\|_{\tau}^{q-1}.
\]
Consequently,
\[
  \left\|
  \left(I_2(a|v|^{q-1})\right)^{1/(q-1)}
  \right\|_{r}
  \leq
  C
  \|a\|_{\frac{N}{2q}}^{1/(q-1)}
  \|v\|_{\tau}.
\]
Since $1/\tau=1/r-2/N$, a second application of the Hardy--Littlewood--Sobolev inequality yields \eqref{eq:Ta-bound}.
\end{proof}

The operator \(\mathcal T_a\) is positively homogeneous for every
\(q>1\). It is genuinely subadditive when \(q\geq2\) (see Lemma below), but this generally fails when \(1<q<2\). We now modify the dependent variable so that the resulting operator is subadditive in both cases.

Denote
\begin{equation}
\label{eq:alpha-q}
  \alpha_q:=\min\{1,q-1\}.
\end{equation}
For  measurable functions \(a,z\geq0\), define
\begin{equation*}
  \mathcal R_a(z)
  :=
  \left[
  \mathcal T_a\left(z^{1/\alpha_q}\right)
  \right]^{\alpha_q}.
\end{equation*}
Let us also fix the notation \(L^{r}_+(\mathbb R^N) :=\left\{z\in L^{r}(\mathbb R^N):z\ge0\ \text{a.e.}\right\}.\)

\begin{lemma}\label{lem:unified-operator}
Let $1<q<N/2$ and $a\in L^{N/(2q)}_+(\mathbb R^N)$. If \(\tau\) satisfies \eqref{eq:value.of.t}, the operator $\mathcal R_a$ satisfies
\[
\mathcal R_a:L^{\tau/\alpha_q}_+(\mathbb R^N)
\rightarrow L^{\tau/\alpha_q}_+(\mathbb R^N),
\]
is  order preserving, positively homogeneous, and subadditive. Moreover, 
\begin{equation}\label{eq:Ra-bound}
\|\mathcal R_a(z)\|_{\frac{\tau}{\alpha_q}}
\le
C_\tau^{\alpha_q}
\|a\|_{\frac{N}{2q}}^{\alpha_q/(q-1)}
\|z\|_{\frac{\tau}{\alpha_q}}
\end{equation}
for every \(z\in L^{\tau/\alpha_q}_+(\mathbb R^N)\).
\end{lemma}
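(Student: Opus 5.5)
The plan is to verify the four structural properties pointwise from the positivity of the Riesz kernel and Minkowski's inequality, and to obtain the norm bound \eqref{eq:Ra-bound} directly from \eqref{eq:Ta-bound}.

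\emph{Mapping property and bound.} Let \(z\in L^{\tau/\alpha_q}_+(\mathbb R^N)\) and set \(v:=z^{1/\alpha_q}\). Then
\[
\|v\|_{\tau}^{\tau}=\int_{\mathbb R^N} z^{\tau/\alpha_q}\,dx,
\qquad\text{so}\qquad
v\in L^\tau(\mathbb R^N),\quad \|v\|_\tau=\|z\|_{\tau/\alpha_q}^{1/\alpha_q}.
\]
By \eqref{eq:Ta-bound}, \(\mathcal T_a(v)\in L^\tau(\mathbb R^N)\). In particular \(\mathcal T_a(v)\) is finite a.e., and it is nonnegative because the kernel of \(I_2\) is positive. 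Hence \(\mathcal R_a(z)=\mathcal T_a(v)^{\alpha_q}\ge0\), and
\[
\|\mathcal R_a(z)\|_{\tau/\alpha_q}=\|\mathcal T_a(v)\|_\tau^{\alpha_q}
\le \bigl(C_\tau\|a\|_{N/(2q)}^{1/(q-1)}\|v\|_\tau\bigr)^{\alpha_q}
= C_\tau^{\alpha_q}\|a\|_{N/(2q)}^{\alpha_q/(q-1)}\|z\|_{\tau/\alpha_q},
\]
which is \eqref{eq:Ra-bound}.

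\emph{Order preservation and homogeneity.} Each building block is monotone on nonnegative functions: \(I_2\) (positive kernel), the power maps \(t\mapsto t^{\theta}\) for \(\theta>0\), and multiplication by \(a\ge0\). Their composition \(\mathcal R_a\) is therefore order preserving. For \(\lambda\ge0\), the factor \(\lambda^{1/\alpha_q}\) passes through \(I_2\) and the power \(q-1\) as \(\lambda^{(q-1)/\alpha_q}\), then through the power \(1/(q-1)\) and the outer \(I_2\) as \(\lambda^{1/\alpha_q}\), and finally through the exponent \(\alpha_q\) as \(\lambda\). Thus \(\mathcal R_a(\lambda z)=\lambda\,\mathcal R_a(z)\).

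\emph{Subadditivity (main step).} The key observation is that the nonlinear step is a pointwise weighted \(L^s\)-norm with \(s\ge1\), so Minkowski's inequality applies. The argument splits into two cases.

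If \(q\ge2\), then \(\alpha_q=1\) and \(\mathcal R_a=\mathcal T_a\). For fixed \(x\), consider the measure \(d\nu_x(y)=c_{N,2}\,a(y)|x-y|^{2-N}dy\). Then
\[
\bigl(I_2(a v^{q-1})(x)\bigr)^{1/(q-1)}=\|v\|_{L^{q-1}(\nu_x)},
\]
and since \(q-1\ge1\), Minkowski's inequality gives
\[
\bigl(I_2(a(v_1+v_2)^{q-1})\bigr)^{1/(q-1)}
\le \bigl(I_2(av_1^{q-1})\bigr)^{1/(q-1)}+\bigl(I_2(av_2^{q-1})\bigr)^{1/(q-1)} .
\]
Applying the linear, positive operator \(I_2\) preserves this inequality, which yields \(\mathcal T_a(v_1+v_2)\le\mathcal T_a(v_1)+\mathcal T_a(v_2)\).

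If \(1<q<2\), then \(\alpha_q=q-1\), and the change of variable linearizes the inner part:
\[
\mathcal R_a(z)=\Bigl[I_2\bigl((I_2(az))^{1/(q-1)}\bigr)\Bigr]^{q-1}.
\]
Write \(\mathcal R_a=\Phi\circ L\) with \(L(z):=I_2(az)\), which is linear and positive. Put \(s:=1/(q-1)>1\) and \(\Phi(g):=[I_2(g^{s})]^{1/s}\). For fixed \(x\), \(\Phi(g)(x)\) is the \(L^s\)-norm of \(g\) with respect to \(c_{N,2}|x-y|^{2-N}dy\), so Minkowski gives \(\Phi(g_1+g_2)\le\Phi(g_1)+\Phi(g_2)\). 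Since \(L(z_1+z_2)=L(z_1)+L(z_2)\), we obtain \(\mathcal R_a(z_1+z_2)\le\mathcal R_a(z_1)+\mathcal R_a(z_2)\).

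The only delicate point is that all quantities must be finite a.e. for these manipulations to be meaningful. This is guaranteed by the \(L^\tau\) bounds already established, which show every intermediate function lies in some Lebesgue space and hence is finite a.e.
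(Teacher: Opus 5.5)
Your proposal is correct and follows essentially the same route as the paper. The bound \eqref{eq:Ra-bound} comes from the substitution $v=z^{1/\alpha_q}$ combined with \eqref{eq:Ta-bound}. Subadditivity comes from Minkowski's inequality in $L^{q-1}$ of the measure $c_{N,2}\,a(y)|x-y|^{2-N}dy$, followed by the positive operator $I_2$, when $q\ge2$; when $1<q<2$ it comes from Minkowski in $L^{1/(q-1)}$ of $c_{N,2}|x-y|^{2-N}dy$, composed with the linear map $z\mapsto I_2(az)$.
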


\begin{proof}
Positivity and order preservation are immediate. Since
\(\mathcal T_a\) is positively homogeneous,
\[
  \mathcal R_a(t z)
  =
 t\mathcal R_a(z)
  \qquad\text{for every }t\geq0.
\]
We prove subadditivity separately in the two ranges.
Suppose first that \(q\geq2\). Then \(\alpha_q=1\), and hence
\(\mathcal R_a=\mathcal T_a\). For every fixed \(x\), introduce the measure
\[
  d\mu_x(\xi):=\frac{c_{N,2}a(\xi)}{|x-\xi|^{N-2}}\,d\xi.
\]
Set \(\rho:=q-1\geq1\). Minkowski's inequality gives
\[
  \left(I_2\bigl(a(v_1+v_2)^\rho\bigr)(x)
  \right)^{1/\rho} \leq\left(I_2(av_1^\rho)(x)\right)^{1/\rho}
  + \left(I_2(av_2^\rho)(x)\right)^{1/\rho}.
\]
Applying the positive linear operator \(I_2\) proves
\[
  \mathcal R_a(v_1+v_2)
  \leq
  \mathcal R_a(v_1)+\mathcal R_a(v_2).
\]
Suppose now that \(1<q<2\). Then \(\alpha_q=\rho\in(0,1)\), and
\[
  \mathcal R_a(z)
  =
  \left\{
  I_2\left[
  \left(I_2(az)\right)^{1/\rho}
  \right]
  \right\}^{\rho}.
\]
Set
\(m:=1/\rho>1.\) For every fixed \(x\), with
\[
  d\nu_x(\xi)
  :=
  \frac{c_{N,2}}{|x-\xi|^{N-2}}\,d\xi,
\]
we may write
\[
  \mathcal R_a(z)(x)
  =
  \left(
  \int_{\mathbb R^N}
  \bigl[I_2(az)(\xi)\bigr]^m\,d\nu_x(\xi)
  \right)^{1/m}.
\]
Because \(I_2\) is linear and positive, Minkowski's inequality in
\(L^m(d\nu_x)\) yields
\[
  \mathcal R_a(z_1+z_2)(x)
  \leq
  \mathcal R_a(z_1)(x)
  +
  \mathcal R_a(z_2)(x).
\]
Thus \(\mathcal R_a\) is subadditive also when \(1<q<2\).
Finally, put \(v=z^{1/\alpha_q}\). By \eqref{eq:Ta-bound},
\[
  \begin{aligned}
  \|\mathcal R_a(z)\|_{\frac{\tau}{\alpha_q}}
  =
  \|\mathcal T_a(v)\|_{\tau}^{\alpha_q}
  \leq
  C_\tau^{\alpha_q}
  \|a\|_{\frac{N}{2q}}^{\alpha_q/(q-1)}
  \|v\|_{\tau}^{\alpha_q}
  =C_\tau^{\alpha_q}
  \|a\|_{\frac{N}{2q}}^{\alpha_q/(q-1)}
  \|z\|_{\frac{\tau}{\alpha_q}},
  \end{aligned}
\]
which proves \eqref{eq:Ra-bound}.
\end{proof}

 Let $u\in \mathcal W$ be a weak solution of
\eqref{generalfproblem}, and set $w:=|\Delta u|^{q-2}\Delta u.$ Let $\Omega\subset\mathbb R^N$ be any open set and fix bounded domains
\[
\Omega_0\Subset\Omega_1\Subset\Omega,
\]
and choose a nonnegative cutoff function
\[
\eta\in C_c^\infty(\Omega_1),
\qquad
\eta=1
\quad\text{in a neighborhood of }\overline{\Omega_0},
\]
such that every positive power of $\eta$ is smooth. Suppose $f$ satisfies \eqref{condf_0} and let
$C_{\Omega_1}$ be the constant in the local critical-growth condition
corresponding to $\Omega_1$. Define
\begin{equation}\label{V function}
V:=C_{\Omega_1}|u|^{q^{**}-q}\chi_{\Omega_1}.
\end{equation}
Since $u\in L^{q^{**}}(\Omega_1)$ and
\[
\frac{q^{**}}{q^{**}-q}=\frac{N}{2q},
\]
we have $V\in L^{N/(2q)}(\mathbb R^N)$. For $k>0$, set
\[
V_k:=V\chi_{\{V>k\}},
\qquad
V^k:=V\chi_{\{V\leq k\}}.
\]
Then $V^k\in L^\infty(\mathbb R^N)$, while
\begin{equation}
\label{eq:VK-small}
\|V_k\|_{\frac{N}{2q}}\rightarrow0
\qquad\text{as }k\rightarrow\infty.
\end{equation}

Set $\theta:=\eta^{q-1}$, and define
\begin{equation}
\label{eq:cutoff-unknowns}
U:=\eta u,
\qquad
W:=\theta w.
\end{equation}
We regard $U$ and $W$ as functions on $\mathbb R^N$ by extending
them by zero outside $\Omega_1$. The apparently asymmetric powers in
\eqref{eq:cutoff-unknowns} are essential. They give the identities
\begin{equation}
\label{eq:cutoff-power-identities}
\eta|w|^{q'-2}w=|W|^{q'-2}W,
\qquad
C_{\Omega_1}\theta|u|^{q^{**}-1}=V|U|^{q-1}.
\end{equation}

\begin{lemma}
\label{lem:closed-cutoff-inequality}
For every $k>0$, there exist an exponent $r_0>q^{**}$, depending only on $N,p,q$, and a nonnegative function
$G_k\in L^{r_0}(\mathbb R^N)$ such that
\begin{equation}
\label{eq:closed-cutoff-inequality}
|U|\leq C\mathcal T_{V_k}(|U|)+G_k
\qquad\text{a.e. in }\mathbb R^N.
\end{equation}
\end{lemma}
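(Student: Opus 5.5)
Localize the system of Proposition~\ref{prop:system-equivalence} with the cutoffs in \eqref{eq:cutoff-unknowns}, represent $U$ and $W$ through Newtonian potentials, and separate the critical part of $f$ (which will produce $\mathcal T_{V_k}$) from everything else (which will go into $G_k$).

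\emph{Step 1: the equation for $W$.} Since $w=|\Delta u|^{q-2}\Delta u$ satisfies $\Delta w=\Delta_pu+f(x,u)$ in $\mathcal D'(\Omega)$, the Leibniz rule gives
\[
-\Delta W=-\theta f(x,u)-\theta\Delta_pu-2\nabla\theta\cdot\nabla w-w\Delta\theta .
\]
I would first rewrite the commutator terms in divergence form, $2\nabla\theta\cdot\nabla w=2\,\mathrm{div}(w\nabla\theta)-2w\Delta\theta$, and similarly $\theta\Delta_pu=\mathrm{div}(\theta|\nabla u|^{p-2}\nabla u)-\nabla\theta\cdot|\nabla u|^{p-2}\nabla u$. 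Since everything is compactly supported in $\Omega_1$, we can write
\[
|W|\le I_2\bigl(\theta|f(x,u)|\bigr)+I_2(|g_0|)+I_1(|\mathbf g_1|),
\]
where $g_0$ collects $w\Delta\theta$ and $\nabla\theta\cdot|\nabla u|^{p-2}\nabla u$, and $\mathbf g_1$ collects $w\nabla\theta$ and $\theta|\nabla u|^{p-2}\nabla u$. These lie in $L^{q'}$, respectively in $L^{p'}$ with $\nabla u\in L^{q^*}_{loc}$, so they have the integrability $L^{q'}$ or $L^{q^*/(p-1)}$, with compact support.

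By \eqref{condf_0} and \eqref{eq:cutoff-power-identities},
\[
\theta|f(x,u)|\le C\theta+V|U|^{q-1}=C\theta+V_k|U|^{q-1}+V^k|U|^{q-1}.
\]
The piece $V^k|U|^{q-1}$ is in $L^{q^{**}/(q-1)}$ with compact support, so it is harmless.

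\emph{Step 2: the equation for $U$.} We have
\[
-\Delta U=-\eta|w|^{q'-2}w-2\nabla\eta\cdot\nabla u-u\Delta\eta ,
\]
and by \eqref{eq:cutoff-power-identities} this gives $|U|\le I_2(|W|^{q'-1})+I_1(|u\nabla\eta|)+I_2(|u\Delta\eta|)+\dots$.

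Substituting Step 1 into this requires $(A+B)^{q'-1}\le C(A^{q'-1}+B^{q'-1})$, which holds for every $q'>1$. The result is
\[
|U|\le C\,I_2\bigl[(I_2(V_k|U|^{q-1}))^{1/(q-1)}\bigr]+G_k=C\,\mathcal T_{V_k}(|U|)+G_k,
\]
where $G_k$ is a sum of terms of the form $I_2[(I_\beta h)^{1/(q-1)}]$ and $I_\beta(h)$ with compactly supported sources $h$.

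\emph{Step 3: integrability of $G_k$.} This is the main obstacle. Each constituent of $G_k$ must be shown to lie in a common $L^{r_0}$ with $r_0>q^{**}$, using Lemma~\ref{lem:compact-source-riesz} twice. The delicate terms are those coming from $I_1(w\nabla\theta)$ with only $w\in L^{q'}$, and from $I_1(\theta|\nabla u|^{p-1})$.

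For the first, HLS gives $I_1(w\nabla\theta)\in L^{Nq'/(N-q')}$. Raising to the power $q'-1$ and applying $I_2$ yields an exponent strictly better than $q^{**}$, since the cutoff derivatives trade one derivative for an extra order of smoothing. For the $p$-term, $p<q^*$ gives $|\nabla u|^{p-1}\in L^{q^*/(p-1)}_{loc}$ with $q^*/(p-1)>q^*/(q^*-1)$, which again produces a strict gain.

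I would verify each exponent explicitly and take $r_0$ to be the minimum of the resulting exponents. It is finite and greater than $q^{**}$, with lower bounds $\tau>N/(N-2)$ from Lemma~\ref{lem:compact-source-riesz} at infinity, and it depends only on $N,p,q$. The bounded term $V^k|U|^{q-1}$ contributes an $L^{r_0}$ piece, possibly with a $k$-dependent norm, which is allowed. If some term yields no gain directly, the fallback is to put it into the $\mathcal T$-part using a smaller cutoff, or to use the extra room coming from $\eta=1$ near $\overline{\Omega_0}$.
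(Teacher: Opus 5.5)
Your proposal is correct and follows essentially the same route as the paper: localize with $U=\eta u$, $W=\eta^{q-1}w$, represent both through $I_1$/$I_2$ potentials, split $V=V_k+V^k$, substitute the $W$-bound into the $U$-bound via $(A+B)^{q'-1}\le C(A^{q'-1}+B^{q'-1})$, and check that the remainder gains integrability beyond $q^{**}$ precisely because $p<q^*$. (You use $|\nabla u|^{p-1}\in L^{q^*/(p-1)}_{\rm loc}$, whereas the paper uses only $|\nabla u|^{p-1}\in L^{p'}$; both work.)

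The one point to state carefully concerns the composed terms $I_2[(I_\beta h)^{1/(q-1)}]$. There you should apply HLS with an intermediate exponent $m=\tau(q-1)\in(q,N/2)$, with $\tau$ slightly above $q'$, as the paper does with $\tau_0$. The top endpoint $\tau=Nq'/(N-q')$ can give $m\ge N/2$ when $q$ is close to $N/2$. With that adjustment, the fallback options in your last sentence are unnecessary.
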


\begin{proof}
By Proposition~\ref{prop:system-equivalence}, the pair $(u,w)$
satisfies \eqref{eq:second-order-system}. The second equation in
\eqref{eq:second-order-system}, the product rule and
\eqref{eq:cutoff-power-identities} give
\[
\Delta U
=|W|^{q'-2}W+2\operatorname{div}(u\nabla\eta)-u\Delta\eta
\qquad\text{in }\mathcal D'(\mathbb R^N).
\]
We apply $I_2$ in this identity. Since $U$ is a compactly supported
distribution, $I_2(\Delta U)$ is well defined and
\[
I_2(\Delta U)=(\Delta I_2)*U=-U.
\]
On the other hand, each term on the right-hand side is represented by
a locally integrable Newtonian potential, with the divergence
producing, after integration by parts, an $I_1$-term. Consequently,
\begin{equation}
\label{eq:cutoff-U-potential}
|U|\leq CI_2\bigl(|W|^{q'-1}\bigr)+d(x),
\end{equation}
where
\begin{equation}
\label{eq:cutoff-B}
d(x):=CI_1(|u\nabla\eta|)+CI_2(|u\Delta\eta|).
\end{equation}
Let $h:=|\nabla u|^{p-2}\nabla u$. Since
\(
\theta\operatorname{div}h
=\operatorname{div}(\theta h)-h\cdot\nabla\theta,
\)
we obtain
\[
\Delta W
=\operatorname{div}(\theta h+2w\nabla\theta)
+\theta f(x,u)-h\cdot\nabla\theta-w\Delta\theta
\qquad\text{in }\mathcal D'(\mathbb R^N).
\]
The local critical-growth condition \eqref{condf_0} and
\eqref{eq:cutoff-power-identities} give
\[
\theta|f(x,u)|
\leq C_{\Omega_1}\theta+V|U|^{q-1}.
\]
Since $W$ is also a compactly supported distribution, splitting
$V=V_k+V^k$ in its equation and applying the Newtonian potential
estimate yield
\begin{equation}
\label{eq:cutoff-W-potential}
|W|\leq CI_2\bigl(V_k|U|^{q-1}\bigr)+R_k,
\end{equation}
where
\begin{align}
R_k(x):={}&
CI_1\bigl(\theta|h|+2|w\nabla\theta|\bigr)
+CI_2\bigl(|h||\nabla\theta|+|w\Delta\theta|\bigr)
\notag\\
&+CI_2\left(\theta+V^k|U|^{q-1}\right).
\label{eq:cutoff-RK}
\end{align}
We claim that there exists $\tau_0>q'$, independent of $k$, such that
\(
q<\tau_0(q-1)<N/2
\)
and
\begin{equation}
\label{eq:cutoff-RK-better}
R_k\in L^\tau(\mathbb R^N)
\qquad\forall\tau\in(q',\tau_0].
\end{equation}
Let us prove the claim. All inputs in \eqref{eq:cutoff-RK} have
compact support in $\Omega_1$. Since
$h\in L^{p'}(\Omega_1)$, Lemma~\ref{lem:compact-source-riesz} gives,
if $p'<N$,
\[
I_1(\theta|h|)\in L^\tau(\mathbb R^N)
\quad\text{for every}\quad
\frac{N}{N-1}<\tau\leq\tau_p,
\qquad
\frac1{\tau_p}=\frac1{p'}-\frac1N.
\]
Moreover,
\[
\frac1{\tau_p}-\frac1{q'}
=\frac1q-\frac1p-\frac1N<0,
\]
where the last inequality is equivalent to $p<q^*$. Hence
$\tau_p>q'$. If $p'\geq N$, Lemma~\ref{lem:compact-source-riesz}
instead gives
\[
I_1(\theta|h|)\in L^\tau(\mathbb R^N)
\qquad\forall \tau>\frac{N}{N-1}.
\]
Thus $I_1(\theta|h|)$ belongs to $L^\tau(\mathbb R^N)$ for every
$\tau>q'$ sufficiently close to $q'$.
The term $I_2(|h||\nabla\theta|)$ has better integrability. Indeed,
$h|\nabla\theta|\in L^{p'}(\mathbb R^N)$ and has compact support. If
$p'<N/2$, Lemma~\ref{lem:compact-source-riesz} gives
\[
I_2(|h||\nabla\theta|)\in L^\tau(\mathbb R^N)
\quad\text{for every}\quad
\frac{N}{N-2}<\tau\leq\tau_{p,2},
\qquad
\frac1{\tau_{p,2}}=\frac1{p'}-\frac2N,
\]
and $\tau_{p,2}>\tau_p>q'$. If $p'\geq N/2$, the same lemma gives
the conclusion for every finite $\tau>N/(N-2)$. Since $q<N/2$
implies $q'>N/(N-2)$, in either case this potential belongs to
$L^\tau(\mathbb R^N)$ for every $\tau>q'$ sufficiently close to
$q'$.

The terms involving $w$ admit the same improvement. Since
$w\in L^{q'}(\Omega_1)$ and the derivatives of $\theta$ are bounded
and compactly supported, Lemma~\ref{lem:compact-source-riesz} gives
\[
I_1(|w\nabla\theta|)\in L^\tau(\mathbb R^N)
\quad\text{for every}\quad
\frac{N}{N-1}<\tau\leq\tau_q,
\qquad
\frac1{\tau_q}=\frac1{q'}-\frac1N,
\]
if $q'<N$, and for every finite $\tau>N/(N-1)$ if $q'\geq N$.
Likewise,
\[
I_2(|w\Delta\theta|)\in L^\tau(\mathbb R^N)
\quad\text{for every}\quad
\frac{N}{N-2}<\tau\leq\tau_{q,2},
\qquad
\frac1{\tau_{q,2}}=\frac1{q'}-\frac2N,
\]
if $q'<N/2$, and for every finite $\tau>N/(N-2)$ if $q'\geq N/2$.
Whenever they are finite, $\tau_q,\tau_{q,2}>q'$. Hence both
potentials belong to $L^\tau(\mathbb R^N)$ for every $\tau>q'$
sufficiently close to $q'$.

Finally, $I_2(\theta)\in L^\tau(\mathbb R^N)$ for every
$\tau>N/(N-2)$, while
\[
V^k|U|^{q-1}\leq k\theta|u|^{q-1}.
\]
Since $\theta|u|^{q-1}
\in L^{q^{**}/(q-1)}(\mathbb R^N)$ and has compact support, Lemma~\ref{lem:compact-source-riesz} shows
that $I_2(V^k|U|^{q-1})$ belongs to $L^\tau(\mathbb R^N)$ for every
$\tau>q'$ sufficiently close to $q'$. Indeed, if
$(q-1)/q^{**}-2/N>0$, the upper exponent $\widetilde\tau_q$ given by
\[
\frac1{\widetilde\tau_q}
=\frac{q-1}{q^{**}}-\frac2N
\]
satisfies
\[
\frac1{\widetilde\tau_q}-\frac1{q'}=-\frac{2q}{N}<0,
\]
and otherwise the conclusion holds for every finite
$\tau>N/(N-2)$. This proves \eqref{eq:cutoff-RK-better} after choosing
$\tau_0>q'$ sufficiently close to $q'$. Such a choice can also be
made so that 
$$q<\tau_0(q-1)<N/2,$$
because $q<N/2$. The corresponding
norm of $R_k$ may depend on $k$.

Since $q'-1=1/(q-1)$, \eqref{eq:cutoff-W-potential} gives
\[
|W|^{q'-1}
\leq
C\left[I_2(V_k|U|^{q-1})\right]^{1/(q-1)}
+CR_k^{1/(q-1)}.
\]
Substituting this estimate into \eqref{eq:cutoff-U-potential}, we
obtain
\[
|U|\leq C\mathcal T_{V_k}(|U|)
+CI_2\left(R_k^{1/(q-1)}\right)+d(x).
\]
By \eqref{eq:cutoff-RK-better},
$R_k^{1/(q-1)}\in L^{\tau(q-1)}(\mathbb R^N)$ for every
$q'<\tau\leq\tau_0$, where $\tau_0(q-1)<N/2$. Hence the
Hardy--Littlewood--Sobolev inequality gives
\[
I_2\left(R_k^{1/(q-1)}\right)
\in L^r(\mathbb R^N)
\quad\forall r\in(q^{**},r_1],
\qquad
\frac1{r_1}:=\frac1{\tau_0(q-1)}-\frac2N.
\]

It remains to estimate $d$. Since $u\nabla\eta$ and $u\Delta\eta$
belong to $L^{q^{**}}(\mathbb R^N)$ and have compact support,
Lemma~\ref{lem:compact-source-riesz} shows that
\[
I_1(|u\nabla\eta|)
\quad\text{and}\quad
I_2(|u\Delta\eta|)
\]
belong to $L^r(\mathbb R^N)$ for every $r>q^{**}$ sufficiently close
to $q^{**}$. Consequently, we may choose $q^{**}<r_0\leq r_1,$ independently of $k$, such that
\[
I_2\left(R_k^{1/(q-1)}\right),d\in L^{r_0}(\mathbb R^N).
\]
Defining
\[
G_k:=CI_2\left(R_k^{1/(q-1)}\right)+d(x),
\]
we obtain $G_k\in L^{r_0}(\mathbb R^N)$ and
\eqref{eq:closed-cutoff-inequality} follows.
\end{proof}

We can now implement the first regularity lifting.  The underlying strategy is to separate the critical coefficient into a part with
small norm, which generates a contraction, and a bounded part that is absorbed into the more regular remainder.  Related contraction arguments for critical integral systems appear in \cite{MaChenLi2011}.  In the present problem, the localized second-order system closes instead through the nonlinear composition $\mathcal T_{V_k}$ constructed above.

\begin{proposition}\label{thm:unified-critical-gain}
Let $u\in \mathcal W$ be a weak solution of
\eqref{generalfproblem} and for a fixed open set $\Omega\subset\mathbb R^N$, assume that \eqref{condf_0} holds. Then there exists $\varepsilon>0$ such
that $u\in L_{\mathrm{loc}}^{q^{**}+\varepsilon}(\Omega).$

\end{proposition}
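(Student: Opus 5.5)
The plan is to combine the pointwise inequality of Lemma~\ref{lem:closed-cutoff-inequality} with the abstract lifting Lemma~\ref{lem:unified-abstract-lifting}, applied to the transformed operator $\mathcal R_{V_k}$ of Lemma~\ref{lem:unified-operator}. Fix $\Omega_0\Subset\Omega_1\Subset\Omega$, the cutoff $\eta$, and $U=\eta u$ as above. Then $U\in L^{q^{**}}(\mathbb R^N)$, and by Lemma~\ref{lem:closed-cutoff-inequality}, for each $k>0$,
\[
|U|\le C\,\mathcal T_{V_k}(|U|)+G_k,\qquad G_k\in L^{r_0}(\mathbb R^N),\quad r_0>q^{**}.
\]
Since $U=u$ on $\Omega_0$ and $\Omega_0$ is arbitrary, it suffices to show $U\in L^{r_0}$, which yields the statement with $\varepsilon=r_0-q^{**}$.

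First, I will pass to the variable $z:=|U|^{\alpha_q}$, with $\alpha_q$ as in \eqref{eq:alpha-q}, in order to get a subadditive operator. Raising the inequality to the power $\alpha_q\le1$ and using $(A+B)^{\alpha_q}\le A^{\alpha_q}+B^{\alpha_q}$, together with the positive homogeneity of $\mathcal T_{V_k}$, gives
\[
z\le \widetilde{\mathcal R}(z)+G_k^{\alpha_q},\qquad \widetilde{\mathcal R}(z):=C^{\alpha_q}\mathcal R_{V_k}(z).
\]
By Lemma~\ref{lem:unified-operator}, $\widetilde{\mathcal R}$ is positive, order preserving, positively homogeneous and subadditive. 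Take $r:=q^{**}/\alpha_q$ and $\tau:=r_0/\alpha_q$. Then $z\in L^{r}$, $G_k^{\alpha_q}\in L^{\tau}$, and $r<\tau$.

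Next comes the contraction. Both $q^{**}$ and $r_0$ must satisfy \eqref{eq:value.of.t}. Indeed $q^{**}>N(q-1)/(N-2q)$ because $q>q-1$, and $q^{**}>N/(N-2)$ because $q^{**}=Nq/(N-2q)$ with $q>1$ and $N-2q<N-2$. Hence $r_0>q^{**}$ satisfies \eqref{eq:value.of.t} as well. Estimate \eqref{eq:Ra-bound} then gives, for $\ell\in\{r,\tau\}$,
\[
\|\widetilde{\mathcal R}(z)\|_{\ell}\le C^{\alpha_q}C_{\alpha_q\ell}^{\alpha_q}\,\|V_k\|_{N/(2q)}^{\alpha_q/(q-1)}\,\|z\|_{\ell}.
\]
By \eqref{eq:VK-small}, $\|V_k\|_{N/(2q)}\to0$, so I choose $k$ large enough that both constants $\kappa_r,\kappa_\tau$ are less than $1$. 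The key point is that $r_0$ is independent of $k$, so choosing $k$ after $r_0$ is legitimate, while $G_k$ is allowed to depend on $k$.

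Finally, Lemma~\ref{lem:unified-abstract-lifting}, applied on $\mathbb R^N$, gives $z\in L^{\tau}$, i.e.\ $U\in L^{r_0}(\mathbb R^N)$. Therefore $u\in L^{r_0}(\Omega_0)$, and since $\Omega_0\Subset\Omega$ is arbitrary, $u\in L^{q^{**}+\varepsilon}_{\rm loc}(\Omega)$ with $\varepsilon=r_0-q^{**}$.

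The main obstacle is the bookkeeping in the reduction to the $z$-variable. One has to confirm that the $\alpha_q$-power step keeps the inequality in the form \eqref{pointwize ineq}, and that \eqref{eq:Ra-bound} is used at exponents $\ell=\tau/\alpha_q$ with $\tau$ in the admissible range. A smaller related point is that the exponent $r_0$ from Lemma~\ref{lem:closed-cutoff-inequality} must admit a uniform HLS constant.
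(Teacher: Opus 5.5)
Your proposal is correct and follows the paper's proof essentially line by line. It makes the same passage to $Z=|U|^{\alpha_q}$, the same choice $r=q^{**}/\alpha_q$, $\tau=r_0/\alpha_q$, the same choice of large $k$ via \eqref{eq:VK-small}, and the same application of Lemma~\ref{lem:unified-abstract-lifting}. Your explicit check that $q^{**}$, and hence $r_0$, satisfies \eqref{eq:value.of.t} is a detail the paper leaves implicit; the only other implicit point, which you share with the paper, is that the constant $C$ in \eqref{eq:closed-cutoff-inequality} is independent of $k$, since it comes only from the potential representation.
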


\begin{proof}
Fix bounded domains
$\Omega_0\Subset\Omega_1\Subset\Omega$, and let $U,V_k,G_k$ be the
functions constructed above. Let $\alpha_q$ be given by
\eqref{eq:alpha-q} and define $Z:=|U|^{\alpha_q}$. Since
$0<\alpha_q\leq1$, \eqref{eq:closed-cutoff-inequality} yields
\[
Z
\leq
C^{\alpha_q}
\bigl[\mathcal T_{V_k}(|U|)\bigr]^{\alpha_q}
+G_k^{\alpha_q}
=C^{\alpha_q}\mathcal R_{V_k}(Z)+G_k^{\alpha_q}.
\]
Define
$\widetilde{\mathcal R}_k:=C^{\alpha_q}\mathcal R_{V_k}$, and set $r:={q^{**}}/{\alpha_q}$ and $\tau:={r_0}/{\alpha_q},$ where $r_0$ is given by Lemma~\ref{lem:closed-cutoff-inequality}.
Then $1<r<\tau<\infty$, and
\[
Z\in L^r(\mathbb R^N),
\qquad
G_k^{\alpha_q}\in L^\tau(\mathbb R^N).
\]
By Lemma~\ref{lem:unified-operator}, the operator
$\widetilde{\mathcal R}_k$ is positive, order preserving, positively
homogeneous and subadditive. Moreover, for
$\ell=q^{**},r_0$, the same lemma gives
\[
\|\widetilde{\mathcal R}_k(z)\|_{L^{\ell/\alpha_q}}
\leq
C^{\alpha_q}C_\ell^{\alpha_q}
\|V_k\|_{L^{N/(2q)}}^{\alpha_q/(q-1)}
\|z\|_{L^{\ell/\alpha_q}}.
\]
In view of \eqref{eq:VK-small}, $k$ can be chosen so large that
\[
C^{\alpha_q}
\max\{C_{q^{**}}^{\alpha_q},C_{r_0}^{\alpha_q}\}
\|V_k\|_{L^{N/(2q)}}^{\alpha_q/(q-1)}<1.
\]
Lemma~\ref{lem:unified-abstract-lifting} therefore gives
$Z\in L^{r_0/\alpha_q}(\mathbb R^N)$. This is exactly
$U\in L^{r_0}(\mathbb R^N)$. Since $U=u$ on $\Omega_0$, we conclude
that $u\in L^{r_0}(\Omega_0).$ The exponent $r_0>q^{**}$ is independent of the domains
$\Omega_0,\Omega_1$. Since $\Omega_0\Subset\Omega$ is arbitrary,
taking $\varepsilon:=r_0-q^{**}>0$ proves the assertion.
\end{proof}

The following auxiliary result will be used  repeatedly in the proof of the local regularity theorem.

\begin{lemma}\label{lem:local-W1-div}
Let $\Omega\subset\mathbb R^N$ be open, let $F\in L^a_{\rm loc}(\Omega,\mathbb R^N)$ and  $g\in L^b_{\rm loc}(\Omega)$, with $1<a,b<\infty$, and let $v\in L^1_{\rm loc}(\Omega)$ satisfy
\[
\Delta v=\operatorname{div}F+g
\]
in the sense of distributions. Then $v\in W^{1,r}_{\rm loc}(\Omega)$ for every $1\le r\le\min\{a,b^*\}$, if $b< N$ or every  $1\le r\le a$ otherwise.

\end{lemma}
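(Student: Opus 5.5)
We localize and represent $v$ through the Newtonian potential, in the same spirit as the proof of Lemma~\ref{lem:closed-cutoff-inequality}. Fix $\Omega_0\Subset\Omega_1\Subset\Omega$ and a cutoff $\zeta\in C_c^\infty(\Omega_1)$ with $\zeta=1$ near $\overline{\Omega_0}$. We split $v$ as
\[
v=v_1+v_2+h,\qquad v_1:=-I_2\bigl(\operatorname{div}(\zeta F)\bigr),\qquad v_2:=-I_2(\zeta g),
\]
where $h:=v-v_1-v_2$. By construction $\Delta h=0$ in $\mathcal D'$ of the open set where $\zeta=1$. Weyl's lemma then makes $h$ smooth there, so $h\in W^{1,r}(\Omega_0')$ for every $r$ on a slightly smaller neighbourhood $\Omega_0'\supset\Omega_0$. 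This requires only $v\in L^1_{\rm loc}$, and it is the reason the hypothesis on $v$ can be so weak.

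Next we estimate the gradients of the two potentials. For $v_1$, note that $\nabla v_1=\nabla I_2\operatorname{div}(\zeta F)$ is a matrix of second-order Riesz transforms, $R_iR_j$, applied to the compactly supported field $\zeta F\in L^a(\mathbb R^N)$. Calderón--Zygmund theory gives $\nabla v_1\in L^a(\mathbb R^N)$. We also need $v_1\in L^a_{\rm loc}$. This follows since $v_1=\sum_j I_1 R_j(\zeta F_j)$ and $I_1$ maps $L^a$ into $L^a_{\rm loc}$, by local Hardy--Littlewood--Sobolev on bounded sets. Hence $v_1\in W^{1,a}_{\rm loc}$.

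For $v_2$, we use $|\nabla I_2(\zeta g)|\le C I_1(|\zeta g|)$. When $b<N$, Lemma~\ref{lem:compact-source-riesz} together with local HLS gives $\nabla v_2\in L^{b^*}_{\rm loc}$, and $v_2\in L^{b^*}_{\rm loc}$ as well, since $I_2$ is even better behaved. When $b\ge N$, the same lemma gives every finite exponent locally.

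Combining the pieces, $v\in W^{1,r}(\Omega_0)$ for $r\le\min\{a,b^*\}$, respectively $r\le a$ when $b\ge N$. Lower exponents down to $r=1$ follow from the boundedness of $\Omega_0$.

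The main obstacle is justifying the decomposition rigorously. We must show that $I_2(\zeta g)$ and $I_2(\operatorname{div}(\zeta F))$ are well-defined locally integrable functions whose Laplacians equal $-\zeta g$ and $-\operatorname{div}(\zeta F)$ in $\mathcal D'(\mathbb R^N)$. This holds because the sources are compactly supported and lie in $L^{\min(a,b)}$ with exponent greater than $1$, so the potentials have at most $|x|^{2-N}$ or $|x|^{1-N}$ decay. We then apply Weyl's lemma to the difference $h$, taking care that $\Delta v$ is identified with $\operatorname{div}(\zeta F)+\zeta g$ only on the set $\{\zeta=1\}$.
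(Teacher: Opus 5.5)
Your proof is correct, but it takes a different route from the paper's. The paper fixes a smooth $\Omega_1$ and solves two Dirichlet problems there:
\begin{itemize}
\item $v_1\in W^{1,a}_0(\Omega_1)$ with $-\Delta v_1=\operatorname{div}F$, taken from the $L^a$-theory of the weak Dirichlet problem (Simader--Sohr);
\item $v_2\in W^{2,b}(\Omega_1)\cap W^{1,b}_0(\Omega_1)$ with $-\Delta v_2=g$ (Gilbarg--Trudinger, Theorem~9.15).
\end{itemize}
It then applies Weyl's lemma to $v+v_1+v_2$, and uses the Sobolev embedding of $W^{2,b}$ to control $\nabla v_2$.

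You instead cut off the data and invert the Laplacian on all of $\mathbb R^N$ with Newtonian potentials. You get $\nabla v_1\in L^a$ from Calder\'on--Zygmund bounds for $\partial_i\partial_j\Gamma$. You get the integrability of $\nabla v_2$ from Hardy--Littlewood--Sobolev applied to $I_1(|\zeta g|)$.

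Your route needs neither a smooth auxiliary domain nor the solvability theory of the $L^a$ Dirichlet problem for $a\neq 2$, which is itself a nontrivial result. It also fits the whole-space potential framework the paper already uses in this section (Lemma~\ref{lem:compact-source-riesz}). The price is that you must check by hand the distributional identities for potentials of compactly supported distributions. You correctly identify this as the delicate point. The paper delegates all of it to cited theorems, and its argument works verbatim in every dimension, whereas your kernel $|z|^{2-N}$ presupposes $N\ge 3$. That is harmless here, since $N>2q>2$. The harmonic remainder handled by Weyl's lemma is common to both proofs.

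One small slip concerns your claim that $I_1$ maps $L^a$ into $L^a_{\rm loc}$. $R_j(\zeta F_j)$ is not compactly supported, and for $a\ge N$ the integral defining $I_1$ of a general $L^a(\mathbb R^N)$ function may diverge at infinity. It is cleaner to write $v_1=-\sum_j(\partial_j\Gamma)*(\zeta F_j)$ with $\Gamma(z)=c_{N,2}|z|^{2-N}$. This gives $|v_1|\le C\,I_1(|\zeta F|)$ with compactly supported $L^a$ data, and local Young or Hardy--Littlewood--Sobolev then yields $v_1\in L^a_{\rm loc}$.
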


\begin{proof}
Fix domains $\Omega_0\Subset\Omega_1\Subset\Omega,$ where $\Omega_1$ has smooth boundary. Since $F\in L^a(\Omega_1,\mathbb R^N)$, then there exists a unique function $v_1\in W^{1,a}_0(\Omega_1)$
satisfying
\[
\int_{\Omega_1}\nabla v_1\cdot\nabla\varphi
=-\int_{\Omega_1}F\cdot\nabla\varphi,
\qquad
\forall\,
\varphi\in W^{1,a'}_0(\Omega_1),
\]
that is, $-\Delta v_1=\operatorname{div}F$ in $\Omega_1$. (See, for instance Theorem II.1.2 of Simader--Sohr \cite{SimaderSohr1996}). Likewise, since $g\in L^b(\Omega_1)$, the classical local regularity theory for the Poisson equation
(see, e.g., Gilbarg--Trudinger \cite{GilbargTrudinger1977}, Theorem 9.15)
provides a function
\[
v_2\in W^{2,b}(\Omega_1)\cap W^{1,b}_0(\Omega_1)
\]
such that $-\Delta v_2=g$ in $\Omega_1$.
Hence $\Delta(v+v_1+v_2)=0$ in the sense of distributions in $\Omega_1$.
Since $v+v_1+v_2\in L^1_{\rm loc}(\Omega_1)$, Weyl's lemma implies that $v+v_1+v_2\in C^\infty(\Omega_1).$

Notice that $v=(v+v_1+v_2)-v_1-v_2,$
with $v_1\in W^{1,a}(\Omega_1)$ and $v_2\in W^{2,b}(\Omega_1) \subset W^{1,b^*}(\Omega_1),$  in case $b<N$, while $v_2\in W^{1,r}(\Omega_1)$ for all finite $r$ in case $b\ge N$.
Consequently, $v\in W^{1,r}(\Omega_0)$ for every $1\le r\le\min\{a,b^*\}$ if $b<N$ or $1\le r\le a$ otherwise. Since $\Omega_0\Subset\Omega$ is arbitrary, the conclusion follows.
\end{proof}

Now we can work on a bootstrap argument to reach higher local regularity.

\begin{proof}[Proof of Theorem~\ref{thm:intro-regularity}]
Consider $u\in\mathcal{W}$ a weak solution for \eqref{generalfproblem}, with $f$ satisfying \eqref{condf_0}. 
We divide the proof into three steps.

\medskip

\noindent
\textbf{Step 1:} There exists \(c_1>q\) such that
\begin{equation}\label{u W2c1}
u\in W_{\mathrm{loc}}^{2,c_1}(\Omega).
\end{equation}

\medskip

\noindent To prove this step, let $\Omega_1\Subset\Omega_2\Subset \Omega.$ By Proposition \ref{thm:unified-critical-gain}, there exists $r_0>q^{**}$ such that $u\in L^{r_0}(\Omega_2).$
Due to Proposition \ref{prop:system-equivalence}, the pair \((u,w)\) satisfies
\begin{equation}
\label{eq:pohozaev-bootstrap-system}
\begin{cases}
\Delta w = \operatorname{div}\bigl(|\nabla u|^{p-2}\nabla u\bigr) + f(x,u),
\\[1mm]
\Delta u=|w|^{q'-2}w,
\end{cases}
\end{equation}
in the sense of distributions.
Since $u\in D^{2,q}(\mathbb R^N),$
we have $\nabla u\in L_{\mathrm{loc}}^{q^*}(\mathbb R^N)$ and so
\[
|\nabla u|^{p-2}\nabla u
\in L_{\mathrm{loc}}^{a_0}(\mathbb R^N),
\qquad
a_0:=\frac{q^*}{p-1}.
\]
Notice that $p< q^*$ implies that $a_0>[(q^{**})']^*$.
We also have
\[
|u|^{q^{**}-2}u
\in L^{b_0}(\Omega_1),
\qquad
b_0:=\frac{r_0}{q^{**}-1}.
\]
Therefore, due to \eqref{condf_0}, $f(x,u)\in L^{b_0}(\Omega_1)$. Since \(r_0>q^{**}\), we get $b_0>(q^{**})'$ and so $b_0^*>[(q^{**})']^*$.
 Since $p<q^*$, we verify that $a_0>[(q^{**})']^*$ as well.
By Lemma \ref{lem:local-W1-div} it follows that $w\in W^{1,s_0}(\Omega_1)$ for $s_0=\min\{a_0,b_0^*\}>[(q^{**})']^*.$ 
If \(s_0<N\), the Sobolev embedding gives
$w\in L^{s_0^*}(\Omega_1)$.
Note that \(
s_0^*>[(q^{**})']^{**}=q'.
\)
 If \(s_0\ge N\), the corresponding Sobolev embedding instead yields
\(w\in L^d(\Omega_1)\) for every finite \(d\). In either case, there exists
\(
d_0>q'
\)
such that
\(
w\in L^{d_0}(\Omega_1).
\)
Therefore,
\[
|w|^{q'-2}w
\in L^{c_1}(\Omega_1),
\qquad
c_1:=(q-1)d_0>(q-1)q'=q.
\]
Since it works for any $\Omega_1\Subset\Omega_2\Subset \Omega$, we conclude that 
$
|w|^{q'-2}w
\in L^{c_1}_{\rm{loc}}(\Omega).
$
The second equation in \eqref{eq:pohozaev-bootstrap-system} and the
 Calderón--Zygmund estimate now give  \eqref{u W2c1}.

\medskip

Notice that,  if $c_1\ge N/2$, then $u\in L_{\mathrm{loc}}^{\tau}(\Omega)$, for all $\tau\in[1,\infty)$.  This motivates the next step.
\medskip

\noindent
\textbf{Step 2:} If $u\in W_{\mathrm{loc}}^{2,c}(\Omega)$ for some $c\ge c_1$ and $u\in L_{\mathrm{loc}}^{r}(\Omega)$, for all $r\in[1,\infty)$ then \eqref{all estimate} holds. 
\medskip

\noindent If $u\in L_{\mathrm{loc}}^{r}(\Omega)$, for all $r\in[1,\infty)$,  due to condition \eqref{condf_0} we have
\begin{equation}\label{f in Ls}
f(x,u)\in L^r_{\rm{loc}}(\Omega),\quad\forall r\in[1,\infty).
\end{equation}
If $u\in W_{\mathrm{loc}}^{2,c}(\Omega)$ we also have $|\nabla u|\in W_{\mathrm{loc}}^{1,c}(\Omega)$. If $c\ge N$, then we get $|\nabla u|\in L_{\mathrm{loc}}^{r}(\Omega),$ for all $r\ge 1$. On the other hand, if $q<c<N$, we see that $|\nabla u|\in L_{\mathrm{loc}}^{c^*}(\Omega)$. Let us define
\begin{equation}\label{a(c) def}
a(c)=c^*/(p-1),
\end{equation}
in the same spirit of $a_0$.  Then
\begin{equation}\label{nabla u L ac}
|\nabla u|^{p-2}\nabla u
\in\begin{cases}L^{a(c)}_{\mathrm{loc}}(\Omega),&\text{if }c<N,\\
L^{r}_{\mathrm{loc}}(\Omega),\quad\forall r\in[1,\infty),&\text{if }c\ge N.
\end{cases}
\end{equation}
Using \eqref{f in Ls} and \eqref{nabla u L ac}, we can apply Lemma~\ref{lem:local-W1-div} to the first equation in
\eqref{eq:pohozaev-bootstrap-system}, to obtain
\begin{equation}\label{cases wcN}
w\in\begin{cases}
W^{1,a(c)}_{\mathrm{loc}}(\Omega),&\text{if }c< N,
\\W^{1,m}_{\mathrm{loc}}(\Omega),\quad\forall m\in[1,\infty)&\text{if }c\ge N.\end{cases}
\end{equation}
Now, we distinguish three cases:

\bigskip

\noindent\textit{Case 1: $c\ge N$.}

\bigskip

\noindent From \eqref{cases wcN} we have $w\in W^{1,m}_{\mathrm{loc}}(\Omega)$ for any $m\in[1,\infty)$. So, 
\begin{equation}\label{wq'Lm}
    |w|^{q'-2}w\in L^m_{\mathrm{loc}}(\Omega)\quad\forall m\in[1,\infty).
\end{equation}
Thus, the second equation in
\eqref{eq:pohozaev-bootstrap-system} provides 
\begin{equation}\label{eq:u-regular-e}
u\in W^{2,m}_{\mathrm{loc}}(\Omega)\quad\forall m\in[1,\infty),
\end{equation}
which completes the proof in this case.

\bigskip

\noindent\textit{Case 2: $c<N$ and $a(c)\ge N$.}

\bigskip

\noindent In this case, from \eqref{cases wcN} we get $w\in L^r_{\mathrm{loc}}(\Omega)$ for all $r\in[1,\infty)$. Then it follows that
\eqref{wq'Lm} occurs. 
Again, the second equation in
\eqref{eq:pohozaev-bootstrap-system}
yields \eqref{eq:u-regular-e}.

Then, choosing \(m>N\) in \eqref{eq:u-regular-e} we see that $|\nabla u|\in L^\infty_{\mathrm{loc}}(\Omega)$. Thus, the local $r$-integrability of $f(x,u)$ for all finite $r\ge1$ and Lemma~\ref{lem:local-W1-div} give
\begin{equation*}
w\in W^{1,m}_{\mathrm{loc}}(\Omega)\quad\forall m\in[1,\infty).
\end{equation*}


\noindent\textit{Case 3: $c<N$ and $a(c)< N$.}

\medskip

\noindent Let us first notice that $a(c)<N$ if and only if $c<N/p^\prime$. This equivalence will be useful in the arguments below. Now, due to \eqref{cases wcN}, we obtain
\(
w\in W^{1,a(c)}_{\mathrm{loc}}(\Omega)
\hookrightarrow
L^{a(c)^*}_{\mathrm{loc}}(\Omega).
\)
Hence
\(
|w|^{q'-2}w
\in
L_{\mathrm{loc}}^{c_p(c)}(\Omega),
\)
for 
\begin{equation}\label{cp(c)}
c_p(c):=(q-1)a(c)^*.
\end{equation}
The second equation of
\eqref{eq:pohozaev-bootstrap-system}
and the interior \(W^{2,m}\)-estimate yield $u\in W^{2,c_p(c)}_{\mathrm{loc}}(\Omega).$
Notice that
\eqref{cp(c)} gives
\begin{align*}
\frac1{c_p(c)}-\frac1c
&=
\frac1c\frac{p-q}{q-1}
-
\frac{p}{N(q-1)}.
\end{align*}
If \(p\le q\), then
\begin{equation*}
\frac1{c_p(c)}-\frac1c
\le
-\frac{p}{N(q-1)}.
\end{equation*}
If \(q<p<q^*\), the fact that \(c\ge c_1>q\) gives
\begin{align*}
\frac1{c_p(c)}-\frac1c
&<\frac{p-q}{q(q-1)}
-\frac{p}{N(q-1)}
=-\frac1{q-1}\left(\frac pN-\frac{p-q}{q}\right).
\end{align*}
The quantity in parentheses is positive precisely when \(p<q^*\).
Accordingly, define
\begin{equation*}
\delta_p:=
\begin{cases}
\displaystyle\frac{p}{N(q-1)},
& p\le q,\\[2mm]
\displaystyle\frac1{q-1}
\left(
\frac pN-\frac{p-q}{q}
\right),
& q<p<q^*.
\end{cases}
\end{equation*}
Then \(\delta_p>0\) and, in either regime,
\begin{equation}
\label{eq:cp-uniform-improvement-both-regimes}
\frac1{c_p(c)}
\le
\frac1c-\delta_p.
\end{equation}

\noindent Denote $c_2=c_p(c_1)$. Then $u\in W_{\mathrm{loc}}^{2,c_2}(\Omega)$ and, from \eqref{eq:cp-uniform-improvement-both-regimes}, we get
\[
\frac{1}{c_2}-\frac{1}{c_1}\leq-\delta_p.
\]
If $c_2\ge N$ we go back to \textit{Case 1}. If $N>c_2\ge N/p^\prime$ then $a(c_2)\ge N$ and we go back to \textit{Case 2}, finishing the proof in either case.   If $c_2< N/p^\prime$, we repeat the process with $c_2$ instead of $c_1$. 
Following this idea,  
we may construct inductively a
sequence \((c_j)\), as long as \(c_j< N/p^\prime\), such that
\[
u\in W_{\mathrm{loc}}^{2,c_{j+1}}(\Omega)
\]
and
\[
\frac1{c_{j+1}} \le \frac1{c_j}-\delta_p,\qquad\text{which implies}\qquad \frac1{c_{j+1}}\le \frac1{c_1}-j\delta_p.
\]
Since the right-hand side becomes smaller than \(p^\prime/N\) after finitely many
steps, there must exist \(j_0\in\mathbb{N}\) such that $c_{j_0}\ge N/{p^\prime}.$ At this point,  the conclusion follows from \textit{Case 1}, if $c_{j_0}\ge N$, or \textit{Case 2}, if $N>c_{j_0}\ge N/p^\prime$.

\bigskip

\noindent
\textbf{Step 3:} Finite bootstrap up to an exponent larger than \(N/2\).

\bigskip
\noindent From \textbf{Step 1} we know that $u\in W^{2,c_1}_{\mathrm{loc}}(\Omega)$ for some $c_1>q$. If $c_1\geq N/2$ our proof would be complete, due to \textbf{Step 2}. Then, suppose that $c_1< N/2$ and assume that for some $q<c_1\leq c< N/2,$ we already know that
$
u\in W^{2,c}_{\mathrm{loc}}(\Omega).
$
Then
\[
|\nabla u|\in L_{\mathrm{loc}}^{c^*}(\Omega),
\qquad
u\in L_{\mathrm{loc}}^{c^{**}}(\Omega).
\]
Consequently, for $a(c)={c^*}/({p-1})$, as defined in \eqref{a(c) def}, we get $|\nabla u|^{p-2}\nabla u \in L_{\mathrm{loc}}^{a(c)}(\Omega).$
Likewise,
\[
|u|^{q^{**}-2}u
\in
L_{\mathrm{loc}}^{b(c)}(\Omega),
\qquad
b(c):=\frac{c^{**}}{q^{**}-1}.
\]
We have $f(x,u)\in L_{\mathrm{loc}}^{b(c)}(\Omega)$ due to \eqref{condf_0}.
Applying Lemma~\ref{lem:local-W1-div} to the first equation in
\eqref{eq:pohozaev-bootstrap-system}, we obtain
\begin{equation}\label{general c}
w\in
W^{1,s(c)}_{\mathrm{loc}}(\Omega),
\quad\text{for}\quad
s(c)
=
\min\{a(c),\,b(c)^*\}.
\end{equation}

Next we are going to study some cases, depending on  $s(c)$.
\smallskip

\noindent \emph{Case 4. Suppose that \(s(c)\ge N\).}

\medskip
\noindent If \(s(c)\ge N\), using \eqref{general c}, the corresponding Sobolev embedding yields
\(w\in L^m_{\rm{loc}}(\Omega)\) for every \(m\in[1,\infty)\), and so
\eqref{wq'Lm} holds. 
Again, the second equation in
\eqref{eq:pohozaev-bootstrap-system}
yields
\[
u\in W^{2,m}_{\mathrm{loc}}(\Omega),\quad\forall m\in[1,\infty).
\]
In this case, it follows from \textbf{Step 2}  that $w\in W^{1,m}_{\mathrm{loc}}(\Omega)$ for all $m\in[1,\infty)$, which means that \eqref{all estimate} occurs. This finishes the proof if $s(c)\ge N$.

\medskip

\noindent\emph{Case 5. Suppose that $s(c)<N$.} 

\medskip

\noindent If $a(c)\le b(c)^*$ we have
$
s(c)=a(c)<N,
$
and we argue as in \textit{Case 3} of \textbf{Step 2}. We see that \eqref{general c} implies
\[
w\in W^{1,a(c)}_{\mathrm{loc}}(\Omega)
\hookrightarrow
L^{a(c)^*}_{\mathrm{loc}}(\Omega).
\]
Hence, for $c_p(c)=(q-1)a(c)^*$ as in \eqref{cp(c)}, it holds
\[
|w|^{q'-2}w
\in
L_{\mathrm{loc}}^{c_p(c)}(\Omega).
\]
The second equation of
\eqref{eq:pohozaev-bootstrap-system}
and the interior \(W^{2,m}\)-estimate give
\[
u\in W^{2,c_p(c)}_{\mathrm{loc}}(\Omega).
\]
Moreover, as can be seen in \eqref{eq:cp-uniform-improvement-both-regimes} there is \(\delta_p>0\), independent of $c>q$, such that
\begin{equation*}
\frac1{c_p(c)}
\le
\frac1c-\delta_p.
\end{equation*}


\noindent On the other hand, if $b(c)^*< a(c)$ we have
$
s(c)=b(c)^*<N.
$
Hence, \eqref{general c} provides
\[
w
\in
W^{1,b(c)^*}_{\mathrm{loc}}(\Omega)
\hookrightarrow
L^{(b(c)^*)^*}_{\mathrm{loc}}(\Omega).
\]
Since
$(q-1)(q'-1)=1$, it follows that
\[
|w|^{q'-2}w
\in
L_{\mathrm{loc}}^{c_q(c)}(\Omega),
\]
where $c_q(c) := (q-1)(b(c)^*)^*.$ Again, the second equation in
\eqref{eq:pohozaev-bootstrap-system}
yields $u\in W^{2,c_q(c)}_{\mathrm{loc}}(\Omega).$ Using the expression for \(b(c)\), we have
\begin{equation}
\label{eq:cq-bootstrap-map}
\frac1{c_q(c)}
=
\frac{q^{**}-1}{q-1}
\left(
\frac1c-\frac2N
\right)
-
\frac2{N(q-1)}.
\end{equation}


\noindent Now, denoting $A:=(q^{**}-1)/(q-1)$ from 
\eqref{eq:cq-bootstrap-map} we obtain
\begin{equation*}
\frac1{c_q(c)}-\frac1q
=
A\left(
\frac1c-\frac1q
\right).
\end{equation*}
Since $c\ge c_1>q$, it follows  that
\begin{align}
\frac1{c_q(c)}-\frac1c
&=
(A-1)
\left(
\frac1c-\frac1q
\right)
\le
-(A-1)
\left(
\frac1q-\frac1{c_1}
\right).
\label{eq:cq-uniform-improvement}
\end{align}
Therefore, for $s(c)<N$,
we define
\begin{equation*}
\delta(c):=
\begin{cases}
\delta_p& a(c)\le b(c)^*,\\[2mm]
(A-1)
\left(
\frac1q-\frac1{c_1}
\right),
& b(c)^*<a(c).
\end{cases}
\end{equation*}
Notice that 
\begin{equation*}
\delta(c)\geq\delta_0:=\min\left\{\delta_p;(A-1)
\left(
\frac1q-\frac1{c_1}
\right)\right\}>0,  \quad\forall c_1\leq c<\frac N2.
\end{equation*}

\noindent If $a(c)\leq b(c)^*$, we denote $c_2=c_p(c_1)$. If $b(c)^*< a(c)$, we consider $c_2=c_q(c_1)$. In any case, $u\in W_{\mathrm{loc}}^{2,c_2}(\Omega)$ and, from \eqref{eq:cp-uniform-improvement-both-regimes} and \eqref{eq:cq-uniform-improvement}, we get
\[
\frac{1}{c_2}-\frac{1}{c_1}\leq-\delta(c_1)\le-\delta_0.
\]
If $c_2\ge N/2$ this proof is finished, by \textbf{Step 2}. If $c_2< N/2$ and $s(c_2)\ge N$, as in \textit{Case 4} this proof is complete. 
Following this idea,  
we may construct inductively a
sequence \((c_j)\), as long as \(c_j< N/2\) and \(s(c_j)< N\), such that $u\in W_{\mathrm{loc}}^{2,c_{j+1}}(\Omega)$ and $1/{c_{j+1}} \le 1/{c_j}-\delta_0.$ Consequently,
\[
\frac1{c_{j+1}}
\le
\frac1{c_1}-j\delta_0
\]
whenever $c_j<N/2$ and $s(c_j)<N$.
Since the right-hand side becomes smaller than \(2/N\) after finitely many
steps, there must exist \(j_0\in\mathbb{N}\) such that $c_{j_0}\ge N/2$ or $s(c_{j_0})\ge N.$ Then the conclusion follows from \textbf{Step 2} or \textit{Case 4}.
\end{proof}

\section{The Pohozaev identity}
\label{sec:pohozaev-identity}

We derive the Pohozaev identity from the annular regularity established above. The argument itself does not depend on the relative ordering of $p$ and $q$. The restrictions $q<N/2$ and $p<q^*$ enter only through the preceding regularity theory.

\begin{lemma}
\label{lem:mixed-hessian-flux}
Let \(\Omega\subset\mathbb R^N\) be open and suppose that
\[
u\in W_{\mathrm{loc}}^{2,m}(\Omega)\quad\text{and}\quad
w:=|\Delta u|^{q-2}\Delta u\in W_{\mathrm{loc}}^{1,m}(\Omega)
\quad\text{for some } m>N.
\]
Then, for every \(\xi\in C_c^\infty(\Omega)\),
\begin{align}
&-\int_\Omega\nabla w\cdot\nabla(\xi\,x\cdot\nabla u)\,dx
=\frac{2q-N}{q}\int_\Omega\xi|\Delta u|^q\,dx
-\frac1q\int_\Omega|\Delta u|^q x\cdot\nabla\xi\,dx
\nonumber\\
&\qquad
+2\int_\Omega w\nabla u\cdot\nabla\xi\,dx
+2\int_\Omega w\sum_{i,j=1}^Nu_{x_ix_j}x_j\xi_{x_i}\,dx
+\int_\Omega w(x\cdot\nabla u)\Delta\xi\,dx.
\label{eq:mixed-hessian-flux}
\end{align}
\end{lemma}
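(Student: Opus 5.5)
The plan is a direct integration by parts. The one term involving a third derivative of \(u\), namely \(x\cdot\nabla\Delta u\), is handled by mollifying \(u\) only. After the limit, the resulting expression is rewritten through the chain rule for \(w\), which is legitimate because \(w\in W^{1,m}_{\rm loc}(\Omega)\) with \(m>N\) is continuous. Fix \(\xi\in C_c^\infty(\Omega)\) and a smooth neighbourhood \(\Omega'\Subset\Omega\) of \(\operatorname{supp}\xi\). The hypotheses give \(\nabla w\in L^m(\Omega')\) and \(D^2u\in L^m(\Omega')\), while \(u,\nabla u,w\) are bounded on \(\Omega'\). Hence every integrand in \eqref{eq:mixed-hessian-flux} is integrable, and all products below lie in \(L^1\) (indeed in \(L^{m/2}\)).

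First expand
\[
\nabla(\xi\,x\cdot\nabla u)=(x\cdot\nabla u)\nabla\xi+\xi\nabla u+\xi D^2u\,x .
\]
\begin{enumerate}[label=(\roman*)]
\item For the term \(-\int\nabla w\cdot\nabla\xi\,(x\cdot\nabla u)\), move the derivative from \(w\) onto \((x\cdot\nabla u)\nabla\xi\), which lies in \(W^{1,m}\). This gives \(\int w(x\cdot\nabla u)\Delta\xi\) plus \(\int w\nabla\xi\cdot(\nabla u+D^2u\,x)\). These are exactly the last term of \eqref{eq:mixed-hessian-flux} and one copy each of the two middle terms.
\item For the remaining term \(-\int\xi\nabla w\cdot(\nabla u+D^2u\,x)\), replace \(u\) by \(u_\varepsilon=u*\rho_\varepsilon\), which is smooth near \(\operatorname{supp}\xi\). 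Integrating by parts in \(x_i\) gives
\[
-\int \xi\, w_{x_i}\,\partial_i(x\cdot\nabla u_\varepsilon)=\int w\,\xi_{x_i}\partial_i(x\cdot\nabla u_\varepsilon)+\int \xi w\,(2\Delta u_\varepsilon+x\cdot\nabla\Delta u_\varepsilon),
\]
using \(\Delta(x\cdot\nabla u_\varepsilon)=2\Delta u_\varepsilon+x\cdot\nabla\Delta u_\varepsilon\). The first integral produces the second copies of \(\int w\nabla u\cdot\nabla\xi\) and \(\int w\sum u_{x_ix_j}x_j\xi_{x_i}\), which explains the factor \(2\).
\item In the third-order term, integrate by parts once more:
\[
\int\xi w\,x\cdot\nabla\Delta u_\varepsilon=-\int\bigl(N\xi w+w\,x\cdot\nabla\xi+\xi\,x\cdot\nabla w\bigr)\Delta u_\varepsilon .
\]
\end{enumerate}
After step (iii), only \(\Delta u_\varepsilon\) and first derivatives of \(w\) appear.

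Let \(\varepsilon\to0\). Since \(D^2u_\varepsilon\to D^2u\) in \(L^m(\operatorname{supp}\xi)\) and \(\nabla u_\varepsilon\to\nabla u\) uniformly there, every term converges by Hölder's inequality, because \(\nabla w\in L^m\) and \(w\in L^\infty\). The approximation thus only needs to be justified at the level of second derivatives of \(u\).

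The key step, and the one I expect to be the main obstacle, is the identification of \(\xi\,\Delta u\,x\cdot\nabla w\). Here I use \(\Delta u=|w|^{q'-2}w\) a.e. and the chain rule in \(W^{1,1}_{\rm loc}\) for the \(C^1\) function \(G(t)=|t|^{q'}/q'\), valid since \(q'>1\) and \(w\) is continuous. This gives
\[
\Delta u\,\nabla w=\nabla\bigl(|w|^{q'}/q'\bigr)=\tfrac1{q'}\nabla|\Delta u|^q ,
\]
using \(|w|^{q'}=|\Delta u|^q\). One last integration by parts yields
\[
-\int\xi\,x\cdot\nabla w\,\Delta u=\frac1{q'}\int|\Delta u|^q(N\xi+x\cdot\nabla\xi).
\]
It is essential not to differentiate \(\Delta u\) itself, since by the Remark above \(u\) need not lie in \(W^{3,m}\) when \(q>2\).

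Finally, collect the pieces using \(w\Delta u=|\Delta u|^q\). The \(\xi|\Delta u|^q\) terms have coefficient
\[
2-N+\frac{N}{q'}=\frac{2q-N}{q},
\]
and the \(|\Delta u|^q\,x\cdot\nabla\xi\) terms have coefficient
\[
-1+\frac1{q'}=-\frac1q .
\]
Together with (i)–(ii) this gives \eqref{eq:mixed-hessian-flux}.
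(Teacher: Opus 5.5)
Your argument is correct, and it handles the non-differentiability of $\Delta u$ differently from the paper.

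\textbf{The paper's route.} The paper first does the formal computation for smooth pairs, using $w\,\nabla(\Delta u)=\frac1q\nabla|\Delta u|^q$. It then justifies this by approximating the whole pair $(u,w)$:
\begin{enumerate}
\item smooth functions $w_n\to w$ in $W^{1,m}$;
\item a regularization $\beta_n$ of $t\mapsto|t|^{q'-2}t$;
\item $u_n=u+v_n$, where $v_n$ solves the Dirichlet problem $\Delta v_n=\beta_n(w_n)-\Delta u$.
\end{enumerate}
Then $\Delta u_n=\beta_n(w_n)$ holds exactly, and the smooth chain rule $w_n\nabla z_n=\nabla B_n(w_n)$ is available. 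Calder\'on--Zygmund estimates and uniform convergence (Morrey, $m>N$) are used to pass to the limit.

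\textbf{Your route.} You mollify only $u$. Before letting $\varepsilon\to0$, you move the third derivative off $\Delta u_\varepsilon$ onto $\xi w x$. You then use the dual identity $\Delta u\,\nabla w=\frac1{q'}\nabla|\Delta u|^q$, which is just the $W^{1,1}_{\mathrm{loc}}$ chain rule for the $C^1$ function $|t|^{q'}/q'$ applied to the locally bounded Sobolev function $w$.

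\textbf{Comparison.} Your approach is shorter and more elementary: it needs no auxiliary boundary value problem and no regularization of the constitutive map. It also makes transparent that only derivatives of $w$, never of $\Delta u$, are needed, which is the point of the paper's Remark. The paper's approach has the advantage of reusing the smooth-case formula verbatim, at the cost of the extra approximation machinery.

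Your coefficient bookkeeping is right. The terms give $2-N+N/q'=(2q-N)/q$ and $-1+1/q'=-1/q$, as the statement requires.
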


\begin{proof}
For smooth \(u,w\), expand the derivative on the left of \eqref{eq:mixed-hessian-flux} and integrate
each term containing a derivative of \(w\) by parts. This gives
\begin{align}
\nonumber&-\int_\Omega \nabla w\cdot\nabla(\xi\,x\cdot\nabla u)\,dx=
2\int_\Omega \xi w\Delta u\,dx
+\int_\Omega \xi w\,x\cdot\nabla(\Delta u)\,dx\\\label{eq:general-mixed-hessian}
&\qquad
+2\int_\Omega w\nabla u\cdot\nabla\xi\,dx
+2\int_\Omega w\sum_{i,j=1}^Nu_{x_ix_j}x_j\xi_{x_i}\,dx
+\int_\Omega w(x\cdot\nabla u)\Delta\xi\,dx.
\end{align}
Since \(w=|\Delta u|^{q-2}\Delta u\),
we have
$w\nabla(\Delta u)=\nabla(|\Delta u|^q)/q$. 
Integrating this last gradient by parts proves
\eqref{eq:mixed-hessian-flux} in the smooth case.

Now we prove the general result by approximation. Choose a smooth  open set $V$ such that $\operatorname{supp}\xi\Subset V\Subset \Omega$ and approximate \(w\) in \(W^{1,m}(V)\), with \(m>N\), by smooth
functions \(w_n\). Then \(w_n\to w\) uniformly on compact subsets of
\(V\), and the sequence is uniformly bounded there. Let
\(\beta_n\in C^\infty(\mathbb R)\) be  functions that
approximate $\beta(t)=|t|^{q'-2}t$ uniformly on bounded intervals. Define
\begin{align}\label{Bn(t)}
B_n(t) := t\beta_n(t)-\int_0^t\beta_n(\tau)\,d\tau. 
\end{align}
Then $B_n\to B$ uniformly on bounded intervals, where
\[
B(t):= t\beta(t)-\int_0^t\beta(\tau)\,d\tau =\frac1q|t|^{q'}.
\]
 The preceding uniform convergence and the
local uniform boundedness of \(w_n\) imply
\begin{equation}\label{zn to laplace u}
z_n:=\beta_n(w_n)\rightarrow\beta(w)=\Delta u\quad\text{in}\,\, L^\infty_{\mathrm{loc}}(V)\,\,\text{and in}\,\,L^m_{\mathrm{loc}}(V).
\end{equation} 
Choose a smooth domain \(V_0\) such that $\operatorname{supp}\xi\Subset V_0\Subset V.$ Let \(v_n\in W^{2,m}(V_0)\cap W_0^{1,m}(V_0)\) solve
\[
\Delta v_n=z_n-\Delta u
\quad\text{in }V_0,
\qquad
v_n=0\quad\text{on }\partial V_0,
\]
and set \(u_n:=u+v_n\). The Calderón--Zygmund estimate gives $v_n\rightarrow0$ in $W^{2,m}(V_0).$
Moreover,
\begin{align}
\Delta u_n=z_n
\quad\text{and}\quad
u_n\rightarrow u
\quad\text{in }\,W^{2,m}(V_0). \label{laplace un=zn}
\end{align}
Since $z_n$ is smooth, interior elliptic regularity implies that $u_n$ is smooth in $V_0$.
Thus, both $u_n$ and $w_n$ are smooth in $V_0$. Applying \eqref{eq:general-mixed-hessian} to $(u_n,w_n)$ and using \eqref{laplace un=zn}, gives
\begin{align}
&-\int_{V_0}
\nabla w_n\cdot\nabla(\xi\,x\cdot\nabla u_n)\,dx = 2\int_{V_0}\xi w_nz_n\,dx
+ \int_{V_0}\xi w_nx\cdot\nabla z_n\,dx
\nonumber\\
&\quad+
2\int_{V_0}w_n\nabla u_n\cdot\nabla\xi\,dx
+ 2\int_{V_0}w_n
\sum_{i,j=1}^N (u_n)_{x_ix_j}x_j\xi_{x_i}\,dx
+\int_{V_0}w_n(x\cdot\nabla u_n)\Delta\xi\,dx.
\label{eq:approximate-mixed-hessian}
\end{align}
Notice that \eqref{Bn(t)} gives
$B_n'(t)=t\beta_n'(t).$
Then, since $\nabla z_n
=\beta_n'(w_n)\nabla w_n$, we see that
\begin{equation}
\label{eq:approximate-chain-rule}
w_n\nabla z_n=w_n\beta_n'(w_n)\nabla w_n
=\nabla B_n(w_n).
\end{equation}
Recalling that $\xi$ has compact support in $V_0$, by \eqref{eq:approximate-chain-rule} and integration by parts we get
\begin{align*}
\int_{V_0}\xi w_nx\cdot\nabla z_n\,dx
&=
\int_{V_0}\xi x\cdot\nabla B_n(w_n)\,dx\,=\,
-\int_{V_0}
B_n(w_n)\operatorname{div}(\xi x)\,dx\\
&=
-N\int_{V_0}\xi B_n(w_n)\,dx
-
\int_{V_0}B_n(w_n)x\cdot\nabla\xi\,dx.
\end{align*}
Substituting this identity into
\eqref{eq:approximate-mixed-hessian}, we obtain
\begin{eqnarray}\label{eq:approximate-mixed-hessian-two}
-\int_{V_0}
\nabla w_n\cdot\nabla(\xi\,x\cdot\nabla u_n)\,dx
= 2\int_{V_0}\xi w_nz_n\,dx - N\int_{V_0}\xi B_n(w_n)\,dx 
-\int_{V_0}B_n(w_n)x\cdot\nabla\xi\,dx\nonumber\\
\qquad\quad
+
2\int_{V_0}w_n\nabla u_n\cdot\nabla\xi\,dx
+
2\int_{V_0}w_n
\sum_{i,j=1}^N (u_n)_{x_ix_j}x_j\xi_{x_i}\,dx
+
\int_{V_0}w_n(x\cdot\nabla u_n)\Delta\xi\,dx.\quad
\end{eqnarray}
Due to the uniform convergence $w_n\to w$ in bounded sets of $V$ and \eqref{zn to laplace u}, we also obtain $w_nz_n\rightarrow w\Delta u=|\Delta u|^q$ in $L^1(V_0)$. Moreover, using that $B_n(t)\rightarrow |t|^{q^\prime}/q$, locally uniformly, it follows that
\[
B_n(w_n)
\rightarrow
\frac1q|w|^{q'}
=
\frac1q|\Delta u|^q
\]
uniformly on $\overline{V_0}$.
Hence, the first three terms on the right-hand side of
\eqref{eq:approximate-mixed-hessian-two} converge to
\begin{align*}
&2\int_{V_0}\xi|\Delta u|^q\,dx
-
\frac Nq\int_{V_0}\xi|\Delta u|^q\,dx
-
\frac1q\int_{V_0}|\Delta u|^q
x\cdot\nabla\xi\,dx\\
&\qquad=
\frac{2q-N}{q}
\int_{V_0}\xi|\Delta u|^q\,dx
-
\frac1q\int_{V_0}|\Delta u|^q
x\cdot\nabla\xi\,dx.
\end{align*}
Finally, using again that
$u_n\rightarrow u$ in $W^{2,m}(V_0)$, and 
$w_n\rightarrow w$ in $W^{1,m}(V_0)$,
where $m>m'$, since $\xi$ and all its derivatives are smooth functions with compact
support in $V_0$, Hölder's inequality gives convergence of every
remaining term in
\eqref{eq:approximate-mixed-hessian-two}. In particular,
\[
\int_{V_0}
\nabla w_n\cdot\nabla(\xi\,x\cdot\nabla u_n)\,dx
\rightarrow
\int_{V_0}
\nabla w\cdot\nabla(\xi\,x\cdot\nabla u)\,dx.
\]
Passing to the limit in
\eqref{eq:approximate-mixed-hessian-two} proves
\eqref{eq:mixed-hessian-flux}.
\end{proof}

\begin{proposition}[Pohozaev identity]
\label{prop:pohozaev-identity}
Assume
$1<p<N,$ $ 1<q<N/2$, $0\leq\sigma<N$, $r>1$ and
suppose that $u\in \mathcal W\cap L^r_\sigma(\mathbb R^N)$ is a weak solution of 
\begin{equation*}
\Delta_q^2u-\Delta_pu
=
\lambda\dfrac{|u|^{r-2}u}{|x|^\sigma}+\tau_1|u|^{p^{*}-2}u
+
\tau_2|u|^{q^{**}-2}u
\quad\text{in }\R^N,
\end{equation*}
such that 
\begin{equation}
\label{eq:annular-regularity-pohozaev}
u\in W^{2,m}_{loc}(\mathbb R^N\backslash\{0\})\quad\text{and}\quad
w:=|\Delta u|^{q-2}\Delta u\in W^{1,m}_{loc}(\mathbb R^N\backslash\{0\}),
\end{equation}
for some $m>N$. Then
\begin{align}
&\frac{N-2q}{q}\int_{\mathbb R^N}|\Delta u|^q\,dx
+\frac{N-p}{p}\int_{\mathbb R^N}|\nabla u|^p\,dx
\nonumber\\&\ \ \ \ \ =\lambda\frac{N-\sigma}{r}
\int_{\mathbb R^N}\frac{|u|^{r}}{|x|^\sigma}\,dx
+\tau_1\frac N{p^*}
\int_{\mathbb R^N}|u|^{p^*}\,dx
+\tau_2\frac N{q^{**}}\int_{\mathbb R^N}|u|^{q^{**}}\,dx.
\label{eq:general-pohozaev-identity}
\end{align}
\end{proposition}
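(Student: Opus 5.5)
We test the weak formulation with the Pohozaev multiplier $\xi_{\varepsilon,R}\,x\cdot\nabla u$, where $\xi_{\varepsilon,R}(x)=\psi(x/R)\bigl(1-\psi(x/\varepsilon)\bigr)$ and $\psi\in C_c^\infty(B_2)$ with $\psi=1$ on $B_1$. Then $\xi_{\varepsilon,R}\in C_c^\infty(\mathbb R^N\setminus\{0\})$. By \eqref{eq:annular-regularity-pohozaev} with $m>N$, the function $\xi_{\varepsilon,R}\,x\cdot\nabla u$ belongs to $W^{1,m}$ with compact support away from the origin. A density argument therefore shows it is an admissible test function in the system formulation of Proposition~\ref{prop:system-equivalence}:
\[
-\int\nabla w\cdot\nabla\phi+\int|\nabla u|^{p-2}\nabla u\cdot\nabla\phi=\int f(x,u)\phi .
\]
Here $f(x,u)\in L^1_{loc}(\mathbb R^N\setminus\{0\})$, and $\phi$ is bounded with compact support.

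For the fourth-order term we apply Lemma~\ref{lem:mixed-hessian-flux} with $\Omega=\mathbb R^N\setminus\{0\}$ and $\xi=\xi_{\varepsilon,R}$. The $p$-Laplacian term is handled in the standard way. Using $u\in W^{2,m}_{loc}$ away from $0$ and the chain rule for $\frac1p\nabla|\nabla u|^p$, one gets
\[
\int|\nabla u|^{p-2}\nabla u\cdot\nabla(\xi x\cdot\nabla u)=\frac{p-N}{p}\int\xi|\nabla u|^p+\int|\nabla u|^{p-2}(\nabla u\cdot\nabla\xi)(x\cdot\nabla u)-\frac1p\int|\nabla u|^p x\cdot\nabla\xi .
\]
This identity is justified by approximating $u$ in $W^{2,m}$ on a neighborhood of $\operatorname{supp}\xi$. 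For the right-hand side, each term is of the form $g(x,u)x\cdot\nabla u=x\cdot\nabla_x[G(x,u)]-(x\cdot\nabla_xG)(x,u)$. Here $G=\frac{\lambda}{r}|u|^r|x|^{-\sigma}$ produces $\frac{N-\sigma}{r}$, and the pure power terms produce $\frac N{p^*}$ and $\frac N{q^{**}}$. Integrating by parts against $\xi$ leaves boundary-type remainders $\int G\,x\cdot\nabla\xi$. Since $u\in C^1$ away from $0$, these manipulations are legitimate after the same smooth approximation.

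The main step is letting $\varepsilon\to0$ and $R\to\infty$. All bulk terms converge by dominated convergence to the integrals in \eqref{eq:general-pohozaev-identity}, because $|\Delta u|^q,|\nabla u|^p,|u|^{p^*},|u|^{q^{**}},|u|^r|x|^{-\sigma}\in L^1(\mathbb R^N)$. The error terms involve $\nabla\xi$ and $\Delta\xi$, which are supported in the annuli $A_\varepsilon=\{\varepsilon<|x|<2\varepsilon\}$ and $A_R=\{R<|x|<2R\}$, where $|x||\nabla\xi|\le C$ and $|x|^2|\Delta\xi|\le C$.

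Terms such as $\int|\Delta u|^q|x||\nabla\xi|$, $\int|w||D^2u||x||\nabla\xi|$ and $\int|\nabla u|^p|x||\nabla\xi|$ are bounded by integrals of $L^1$ functions over the annuli, so they tend to zero. The remaining terms are $\int|w||\nabla u||\nabla\xi|$ and $\int|w||x||\nabla u||\Delta\xi|$, both bounded by $C\int_{A}|\Delta u|^{q-1}|\nabla u||x|^{-1}$. Hölder's inequality bounds this by $\|\Delta u\|_{L^q(A)}^{q-1}\,\bigl\|\nabla u/|x|\bigr\|_{L^q(A)}$. The factor $\|\nabla u/|x|\|_{L^q}$ is finite by the Hardy inequality, since $\nabla u\in D^{1,q}$ and $q<N$. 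The first factor tends to zero on shrinking or escaping annuli, so these terms vanish too.

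I expect this annulus control, especially near the origin (where only the weak regularity $u\in\mathcal W$ is available), to be the main obstacle. I would handle it as above: use the Hardy and Hardy--Rellich inequalities from Section~\ref{sectionsobolev}, and for the right-hand side boundary terms use $|u|^{q^{**}},|u|^{p^*}\in L^1$ and $u\in L^r_\sigma$. The hypothesis $u\in\mathcal W\cap L^r_\sigma$ is exactly what these bounds require. Collecting the limits and multiplying by $-1$ where needed gives \eqref{eq:general-pohozaev-identity}.
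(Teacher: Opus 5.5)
Your proposal is correct and follows essentially the same route as the paper. You test the first equation of the system with $\xi_{\varepsilon,R}\,x\cdot\nabla u$, justified by approximation in $W^{2,m}$ away from the origin. You then apply Lemma~\ref{lem:mixed-hessian-flux} to the flux term, the chain rule to the $p$-Laplacian term, and integration by parts to the nonlinearities. Finally, you kill the annular errors using $w\in L^{q'}$, $D^2u\in L^q$ and the first-order Hardy inequality for $\nabla u/|x|$, and pass to the limit by dominated convergence.

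The differences are cosmetic: writing $\|w\|_{L^{q'}(A)}=\|\Delta u\|_{L^q(A)}^{q-1}$ matches the paper's bound, and the Hardy--Rellich inequality you mention is not actually needed.
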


\begin{proof}
By Proposition \ref{prop:system-equivalence}, the pair \((u,w)\) satisfies
\begin{equation}\label{system fr}
\begin{cases}
\Delta w-\operatorname{div}(|\nabla u|^{p-2}\nabla u)=f_r(x,u),\\
\Delta u=|w|^{q'-2}w,
\end{cases}
\end{equation}
where
\[
f_r(x,u)=
\lambda\frac{|u|^{r-2}u}{|x|^\sigma}
+\tau_1|u|^{p^*-2}u
+\tau_2|u|^{q^{**}-2}u.
\]
 Choose \(\phi,\psi\in C^\infty([0,\infty))\) such that
\[
0\leq\phi,\psi\leq1,\qquad
\phi=0\text{ on }[0,1],\quad\phi=1\text{ on }[2,\infty),
\]
\[
\psi=1\text{ on }[0,1],\qquad
\psi=0\text{ on }[2,\infty),
\]
and define
\[
\xi_{\varepsilon,R}(x)=
\phi\left(\frac{|x|}{\varepsilon}\right)
\psi\left(\frac{|x|}{R}\right).
\]
Its derivatives are supported in $A_\varepsilon=\{\varepsilon<|x|<2\varepsilon\},$ $A_R=\{R<|x|<2R\},$ and
\begin{equation}
\label{eq:pohozaev-cutoff-bounds}
|x||\nabla\xi_{\varepsilon,R}|\leq C,\qquad
|x|^2|\Delta\xi_{\varepsilon,R}|\leq C.
\end{equation}
For a smooth open set $\Omega$ such that $\rm{supp}(\xi_{\varepsilon,R})\Subset \Omega\Subset\mathbb{R}^N\backslash\{0\}$, due to \eqref{eq:annular-regularity-pohozaev}, we can consider $u_n\in C^\infty(\Omega)$ such that $u_n\to u$ in $W^{2,m}(\Omega)$. Using $\phi_{n} := (\xi_{\varepsilon,R} x\cdot\nabla u_n)\in C^\infty_c(\Omega)$ as a test function in the first equation of \eqref{system fr} and then passing the limit as $n\to\infty$ we obtain
\begin{align}
&-\int_\Omega \nabla w\cdot\nabla
(\xi_{\varepsilon,R}x\cdot\nabla u) dx
+\int_\Omega |\nabla u|^{p-2}\nabla u\cdot
\nabla(\xi_{\varepsilon,R}x\cdot\nabla u) dx
\nonumber\\
&\qquad
=\int_\Omega f_r(x,u)\xi_{\varepsilon,R}x\cdot\nabla u dx.
\label{eq:tested-pohozaev-system}
\end{align}
From 
Lemma \ref{lem:mixed-hessian-flux} we know that
\begin{align*}
&-\int_\Omega \nabla w\cdot\nabla
(\xi_{\varepsilon,R}x\cdot\nabla u) dx
=\frac{2q-N}{q}\int_\Omega \xi_{\varepsilon,R}|\Delta u|^q dx
+E_{\varepsilon,R}^q,
\end{align*}
where
\begin{align*}
E_{\varepsilon,R}^q
={}&-\frac1q\int_\Omega |\Delta u|^q x\cdot\nabla\xi_{\varepsilon,R} dx
+2\int_\Omega  w\nabla u\cdot\nabla\xi_{\varepsilon,R} dx\\
&+2\int_\Omega  w\sum_{i,j=1}^Nu_{x_ix_j}x_j
(\xi_{\varepsilon,R})_{x_i} dx
+\int_\Omega  w(x\cdot\nabla u)\Delta\xi_{\varepsilon,R} dx.
\end{align*}

\noindent For the \(p\)-Laplacian term, the chain rule gives
\begin{align*}
&\int_\Omega  |\nabla u|^{p-2}\nabla u\cdot
\nabla(\xi_{\varepsilon,R}x\cdot\nabla u) dx
=\frac{p-N}{p}\int_\Omega \xi_{\varepsilon,R}|\nabla u|^p dx
+E_{\varepsilon,R}^p,
\end{align*}
where
\[
E_{\varepsilon,R}^p
=\int_\Omega |\nabla u|^{p-2}(x\cdot\nabla u)
\nabla u\cdot\nabla\xi_{\varepsilon,R} dx
-\frac1p\int_\Omega |\nabla u|^p x\cdot\nabla\xi_{\varepsilon,R}dx.
\]

\noindent The nonlinear terms satisfy
\begin{align*}
&\lambda\int_\Omega \frac{|u|^{r-2}u}{|x|^\sigma}
\xi_{\varepsilon,R}x\cdot\nabla udx
\nonumber
=-\lambda\frac{N-\sigma}{r}
\int_\Omega \xi_{\varepsilon,R}\frac{|u|^{r}}{|x|^\sigma}dx
-\frac{\lambda}{r}\int_\Omega \frac{|u|^{r}}{|x|^\sigma}
x\cdot\nabla\xi_{\varepsilon,R}dx,
\end{align*}
and, for every \(s>1\),
\begin{align*}
\int_\Omega |u|^{s-2}u\,\xi_{\varepsilon,R}x\cdot\nabla u dx
=-\frac Ns\int_\Omega \xi_{\varepsilon,R}|u|^sdx
-\frac1s\int_\Omega |u|^sx\cdot\nabla\xi_{\varepsilon,R}dx.
\end{align*}

\noindent It remains to remove the cutoffs. The Calderón--Zygmund estimate and
the first-order Hardy inequality applied to the components of
\(\nabla u\) give
\begin{equation*}
D^2u\in L^q(\mathbb R^N),\qquad
\frac{\nabla u}{|x|}\in L^q(\mathbb R^N)\quad\text{and}\quad\|D^2u\|_q+
\left\|\frac{\nabla u}{|x|}\right\|_q
\leq C\|\Delta u\|_q.
\end{equation*}
Also, $w\in L^{q'}(\mathbb R^N)$ and 
$\|w\|_{q'}^{q'}=\|\Delta u\|_q^q.$
Put \(\mathcal A_{\varepsilon,R}=A_\varepsilon\cup A_R\).
Using \eqref{eq:pohozaev-cutoff-bounds} and Hölder's inequality,
\begin{align*}
|E_{\varepsilon,R}^q|
\leq{}&
C\int_{\mathcal A_{\varepsilon,R}}|\Delta u|^qdx
+C\|w\|_{L^{q'}(\mathcal A_{\varepsilon,R})}
\left(
\|D^2u\|_{L^q(\mathcal A_{\varepsilon,R})}
+\left\|\frac{\nabla u}{|x|}\right\|_
{L^q(\mathcal A_{\varepsilon,R})}
\right),
\end{align*}
while
\[
|E_{\varepsilon,R}^p|
\leq C\int_{\mathcal A_{\varepsilon,R}}|\nabla u|^pdx.
\]
The nonlinear cutoff errors are bounded by
\[
C\int_{\mathcal A_{\varepsilon,R}}
\left(
\frac{|u|^{r}}{|x|^\sigma}
+\tau_1|u|^{p^*}
+\tau_2|u|^{q^{**}}
\right)dx.
\]
All these functions are globally integrable. Absolute continuity of
the integral therefore gives
\[
\lim_{R\to\infty}\lim_{\varepsilon\to0}
E_{\varepsilon,R}^q
=
\lim_{R\to\infty}\lim_{\varepsilon\to0}
E_{\varepsilon,R}^p=0.
\]
Similarly,
\[
\lim_{R\to\infty}\lim_{\varepsilon\to0}\int_\Omega \frac{|u|^{r}}{|x|^\sigma}
x\cdot\nabla\xi_{\varepsilon,R}dx=\lim_{R\to\infty}\lim_{\varepsilon\to0}\int_\Omega |u|^sx\cdot\nabla\xi_{\varepsilon,R}dx=0.
\]
Finally, since
\[
0\leq\xi_{\varepsilon,R}\leq1\quad\text{and}\quad
\lim_{R\to\infty}\lim_{\varepsilon\to0}
\xi_{\varepsilon,R}(x)=1
\quad(x\neq0),
\]
the dominated convergence theorem  ensures that \eqref{eq:general-pohozaev-identity} holds.
\end{proof}

As a standard result from the Pohozaev identity, we have the following nonexistence result.

\begin{corollary}
Let \(0\leq\sigma<2q\) and \(r>1\), and suppose that \(u\in \mathcal W\cap L^r_\sigma(\mathbb R^N)\) is a
weak solution of
\begin{equation}\label{pure power}
\Delta_q^2u-\Delta_pu
=
\frac{|u|^{r-2}u}{|x|^\sigma}
\qquad\text{in }\,\,\mathbb R^N
\end{equation}
satisfying \eqref{eq:annular-regularity-pohozaev}. If
$r\leq p^*_\sigma$ 
 or
$r\geq q^{**}_\sigma$,
then \(u\equiv0\). 
Moreover, for $\sigma$ and $r_\sigma$ as in \eqref{sigma_and_rsigma}, the reduced critical problem
\eqref{intro:problem-p-less-q-large}
has no nontrivial solution when \(\lambda\leq0\).
\end{corollary}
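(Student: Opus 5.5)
The plan is to combine the Nehari identity, obtained by testing the equation with $u$, with the Pohozaev identity of Proposition~\ref{prop:pohozaev-identity}. First, since $u\in\mathcal W\cap L^r_\sigma(\mathbb R^N)$, density of $C_c^\infty$ lets us test \eqref{pure power} with $u$:
\[
\int_{\R^N}|\Delta u|^q\,dx+\int_{\R^N}|\nabla u|^p\,dx=\int_{\R^N}\frac{|u|^r}{|x|^\sigma}\,dx=:K.
\]
The weighted term needs a little care here. Write $a:=\|\Delta u\|_q^q$ and $b:=\|\nabla u\|_p^p$, so $a+b=K$.

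Second, apply Proposition~\ref{prop:pohozaev-identity} with $\lambda=1$ and $\tau_1=\tau_2=0$. This is legitimate because the annular regularity \eqref{eq:annular-regularity-pohozaev} is assumed. It gives
\[
\frac{N-2q}{q}a+\frac{N-p}{p}b=\frac{N-\sigma}{r}K=\frac{N-\sigma}{r}(a+b).
\]
Now use $p^*_\sigma=p(N-\sigma)/(N-p)$ and $q^{**}_\sigma=q(N-\sigma)/(N-2q)$. The two coefficients then satisfy
\[
\frac{N-2q}{q}-\frac{N-\sigma}{r}=(N-\sigma)\Bigl(\frac1{q^{**}_\sigma}-\frac1r\Bigr),\qquad
\frac{N-p}{p}-\frac{N-\sigma}{r}=(N-\sigma)\Bigl(\frac1{p^{*}_\sigma}-\frac1r\Bigr).
\]
Hence
\[
\Bigl(\frac1{q^{**}_\sigma}-\frac1r\Bigr)a+\Bigl(\frac1{p^{*}_\sigma}-\frac1r\Bigr)b=0,
\]
using $N-\sigma>0$.

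Third, the sign analysis. Since $p<q^*$, we have $p^*_\sigma<q^{**}_\sigma$.
\begin{itemize}
\item If $r\le p^*_\sigma$, both coefficients are $\le 0$, and the $a$-coefficient is strictly negative because $r<q^{**}_\sigma$. Thus $a=0$, so $\Delta u=0$ and $u=0$ in $D^{2,q}$.
\item If $r\ge q^{**}_\sigma$, both coefficients are $\ge 0$ and the $b$-coefficient is strictly positive. Thus $b=0$, and $u\in D^{1,p}$ forces $u=0$.
\end{itemize}
This step needs the standing assumption $p<q^*$, which is in force throughout.

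Finally, the reduced problem \eqref{intro:problem-p-less-q-large} with $\lambda\le 0$. Here $r_\sigma\in(p^*_\sigma,q^{**}_\sigma)$ by Corollary~\ref{cor compact emb}, and $u$ has the annular regularity by Theorem~\ref{thm:intro-regularity} applied with $\Omega=\R^N\setminus\{0\}$. Combine Nehari,
\[
a+b=\lambda K+Q,\qquad Q:=\|u\|_{q^{**}}^{q^{**}},
\]
with Pohozaev ($\tau_2=1$),
\[
\frac{N-2q}{q}a+\frac{N-p}{p}b=\lambda\frac{N-\sigma}{r_\sigma}K+\frac{N-2q}{q}Q.
\]
Multiply the Nehari identity by $(N-2q)/q$ and subtract from the Pohozaev identity; $Q$ cancels and we get
\[
\Bigl(\frac{N-p}{p}-\frac{N-2q}{q}\Bigr)b=\lambda\Bigl(\frac{N-\sigma}{r_\sigma}-\frac{N-2q}{q}\Bigr)K.
\]
The coefficient $\frac{N-p}{p}-\frac{N-2q}{q}=\frac{N(q-p)+pq}{pq}$ is positive since $p<q^*$. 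The bracket on the right is positive because $r_\sigma<q^{**}_\sigma$ is equivalent to $\frac{N-\sigma}{r_\sigma}>\frac{N-2q}{q}$.

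So for $\lambda\le0$ the right side is $\le 0$ while the left side is $\ge 0$, which forces $b=0$. As $u\in D^{1,p}$, this gives $u=0$.

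The main obstacle is the justification of the Nehari test with $u$ for the singular weighted term, which needs density of $C_c^\infty$ in $\mathcal W\cap L^r_\sigma$. The rest of the argument is algebra.
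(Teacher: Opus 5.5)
Your proposal is correct and follows essentially the same route as the paper: you test the equation with $u$ and combine it with the Pohozaev identity of Proposition~\ref{prop:pohozaev-identity} to get a linear relation between $\|\Delta u\|_q^q$ and $\|\nabla u\|_p^p$ (for the reduced problem, after eliminating the $q^{**}$-term via $N/q^{**}=(N-2q)/q$), and you conclude by a sign analysis resting on $p^*_\sigma<q^{**}_\sigma$, i.e.\ $p<q^*$, and $r_\sigma<q^{**}_\sigma$. Writing the coefficients as $(N-\sigma)(1/q^{**}_\sigma-1/r)$ and $(N-\sigma)(1/p^*_\sigma-1/r)$ makes the sign bookkeeping cleaner than in the paper. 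You also make explicit two points the paper leaves implicit: the justification of testing with $u$, and, in the reduced case, the annular regularity via Theorem~\ref{thm:intro-regularity}.
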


\begin{proof}
Testing the equation \eqref{pure power} with \(u\) and combining the resulting
identity with Proposition~\ref{prop:pohozaev-identity} gives
\[
\left(
\frac{N-2q}{q}-\frac{N-\sigma}{r}
\right)\int_{\mathbb R^N}|\Delta u|^q\,dx
+
\left(
\frac{N-p}{p}-\frac{N-\sigma}{r}
\right)\int_{\mathbb R^N}|\nabla u|^p\,dx
=0.
\]
Under either of the stated conditions on \(r\), the two coefficients
have the same sign, with at least one of them nonzero. Hence
\[
\int_{\mathbb R^N}|\Delta u|^q\,dx=\int_{\mathbb R^N}|\nabla u|^p\,dx=0,
\]
and therefore \(u\equiv0\).
For the reduced critical problem, 
the equation tested with \(u\) and the Pohozaev identity yield
\[
\lambda
\left(
\frac{N-2q}{q}
-\frac{N-\sigma}{r_\sigma}
\right)\int_{\mathbb R^N}
\frac{|u|^{r_\sigma}}{|x|^\sigma}\,dx
=
\left(
\frac{N-2q}{q}
-\frac{N-p}{p}
\right)\int_{\mathbb R^N}|\nabla u|^p\,dx.
\]
The two coefficients in parentheses are negative because
\(p<q^*\) and \(p<\sigma<2q\) imply
\[
\frac{N-2q}{q}
<
\frac{N-\sigma}{r_\sigma}
<
\frac{N-p}{p}.
\]
Thus, every nontrivial solution must satisfy \(\lambda>0\).
\end{proof}

\bigskip

\noindent\textbf{Acknowledgments:}
B.~Ribeiro acknowledges financial support from CNPq-Brazil through
grants 443594/2023-6, 314111/2023-9, and 201452/2024-3. E.~Gloss
acknowledges financial support from CNPq-Brazil through grants
201454/2024-6 and 443594/2023-6. K.~Perera  acknowledges financial support from CNPq-Brazil through
grant Universal 409764/2023-0. Part of this work was carried out while B.~Ribeiro and E.~Gloss were visiting the Florida Institute of
Technology. They gratefully acknowledge the institution for its hospitality and the stimulating research environment provided during
their visit.

\bigskip

\noindent\textbf{Data availability statement:}
This manuscript does not use any data.

\medskip

\noindent\textbf{Declaration of competing interests:}
The authors declare that they have no known competing financial interests or personal relationships that could have appeared to influence the work reported in this paper.

\medskip

\noindent\textbf{Declaration of generative AI and AI-assisted technologies
in the manuscript preparation process: } During the preparation of this work, the authors used ChatGPT (version 5.6 Sol)
(OpenAI) to assist with language editing and the organization of
parts of the exposition. The authors reviewed and edited all
resulting material and take full responsibility for the content of
the article.



\addcontentsline{toc}{section}{References}
\bibliographystyle{plain} 
\bibliography{Papers}

\end{document}